\documentclass{amsart}
\usepackage{graphicx} 
\usepackage{amsfonts,amsmath,amssymb,amsthm}
\usepackage{xcolor}
\usepackage{hyperref}
\usepackage{rotating}
\usepackage[shortlabels]{enumitem}
\usepackage[all]{xy}
\usepackage{booktabs}
\usepackage{adjustbox}
\usepackage{appendix}

\subjclass[2020]{11G18, 14K15, 11F80}

\newcommand{\nc}[1]{\newcommand{#1}}
\nc{\on}[1]{\operatorname{#1}}
\nc{\OO}[0]{\mathcal O}
\nc{\PP}[0]{\mathbb P}

\nc{\HH}[0]{\mathcal H}

\nc{\GL}[0]{\on{GL}}
\nc{\SL}[0]{\on{SL}}
\nc{\Sp}[0]{\on{Sp}}
\nc{\GSp}[0]{\on{GSp}}
\nc{\GO}[0]{\on{GO}}
\nc{\GU}[0]{\on{GU}}
\nc{\PSL}[0]{\on{PSL}}
\nc{\PGL}[0]{\on{PGL}}
\nc{\Mat}[0]{\on{Mat}}
\nc{\GammaZeroPM}[0]{\ensuremath{\Gamma^0_{\pm}}}
\nc{\diag}[0]{\on{diag}}

\nc{\FF}[0]{\mathbb F}
\nc{\NN}[0]{\mathbb N}
\nc{\ZZ}[0]{\mathbb Z}
\nc{\QQ}[0]{\mathbb Q}
\nc{\RR}[0]{\mathbb R}
\nc{\CC}[0]{\mathbb C}
\nc{\TT}[0]{\mathbb T}

\nc{\pp}[0]{\mathfrak p}
\nc{\qq}[0]{\mathfrak q}

\nc{\ol}[1]{\overline{#1}}
\nc{\ul}[1]{\underline{#1}}

\nc{\Div}[0]{\on{Div}}
\nc{\Pic}[0]{\on{Pic}}
\nc{\Jac}[0]{\on{Jac}}

\nc{\Gal}[0]{\on{Gal}}
\nc{\Aut}[0]{\on{Aut}}
\nc{\End}[0]{\on{End}}
\nc{\EndQ}[0]{\on{End_\QQ}}
\nc{\Endo}[0]{\on{End}^0}
\nc{\EndoQ}[0]{\on{End}^0_\QQ}
\nc{\Hom}[0]{\on{Hom}}
\nc{\Frob}[0]{\on{Frob}}

\nc{\Tl}[0]{\on{T}_\ell}
\nc{\Vl}[0]{\on{V}_\ell}
\nc{\Vlambda}[0]{\ensuremath{\on{V}_\lambda}}

\nc{\Tr}[0]{\on{Tr}}

\nc{\Br}[0]{\on{Br}}

\nc{\XN}[0]{\mathcal{X}_2(N)}
\nc{\THBK}[0]{\on{THBK}}
\nc{\TB}[0]{\on{TB}}
\nc{\Grit}[0]{\on{Grit}}

\nc{\rank}[0]{\on{rank}}

\newtheorem{theorem}{Theorem}[section]
\newtheorem{proposition}[theorem]{Proposition}
\newtheorem{lemma}[theorem]{Lemma}
\newtheorem{corollary}[theorem]{Corollary}
\newtheorem{definition}[theorem]{Definition}

\newtheorem{remark}[theorem]{Remark}

\newtheorem{conjecture}[theorem]{Conjecture}

\title[Non-split Cartan curves and fake elliptic curves]{Non-split Cartan curves and a characterization of fake elliptic curves}
\author{Enric Florit}
\address{Universitat Oberta de Catalunya, Rambla del Poblenou 156, 08018, Barcelona, Spain}
\email{efloritz@uoc.edu}
\date{}
\keywords{Fake elliptic curves, modular curves, quadratic points, non-split Cartan curves}
\urladdr{https://enricflorit.com}

\begin{document}

\begin{abstract}
    The $\ell$-adic Tate module of an abelian surface with quaternionic multiplication (QM) decomposes as two copies of a two-dimensional $\QQ$-rational representation. To this day, there is no criterion to distinguish these representations from those attached to elliptic curves. For this reason, QM abelian surfaces are usually called fake elliptic curves.
    
    In this paper we characterize QM abelian surfaces defined over an imaginary quadratic field $K$. Namely, we consider an abelian surface $A/K$ without potential CM and whose $L$-function is a square. Under a reasonable conjecture, we show that $A$ has QM if and only if it has residual image contained in a non-split Cartan group modulo at least two primes. The characterization is unconditional for all indefinite quaternion discriminants up to 33. 
    
    The proof is based on previous work of Siksek and Michaud-Jacobs on quadratic points on non-split Cartan modular curves. In particular, we prove that all quadratic points on $X_{ns}(6)$, $X_{ns}(10)$ and $X_{ns}(15)$ are non-exceptional.
\end{abstract}

\maketitle

\section{Introduction}

\subsection*{Motivation}

Abelian varieties over number fields are central objects of study in number theory, and many results are centered around their parametrization via automorphic forms. These give us access to the essential properties (such as L-functions and endomorphism algebras) of the varieties without needing to explicitly write down equations.

For instance, let $N\geq 1$ be an integer. Given a classical normalized newform $f\in S_2(\Gamma_1(N))$, the Eichler-Shimura construction produces an abelian variety $A_f$ defined over $\QQ$ attached to $f$. The $L$-function of this variety is given in terms of $f$ by the formula
\[
    L(A_f,s)=\prod_{\sigma:H_f\to \CC}L({}^\sigma f,s),
\]
where $H_f := \QQ(\{a_p(f)\}_{p\nmid N})$ is a number field with the properties
\[
    \dim A_f=[H_f:\QQ] \text{ and }\End(A_{f/\QQ})\otimes\QQ =H_f.
\]
Moreover, with sufficient knowledge of the inner twists of $f$, one can compute the $\bar\QQ$-endomorphism algebra and decomposition of $A_{f,\bar\QQ}$, as done in \cite{chi87, pyle04, quer09, gq14}.

Let $K$ be an imaginary quadratic field and let $\OO_K$ be its ring of integers. Suppose for simplicity that $K$ has class number one. A Bianchi modular form is a function $\mathfrak{f}:\mathbb H_3\to \CC^3$, where $\mathbb H_3$ is the hyperbolic $3$-space, satisfying certain conditions. They correspond to automorphic forms for $\GL_2(K)$, and as in the classical modular form setting, there is a language of weights, levels, Hecke operators, and eigenforms. Given a normalized eigenform $\mathfrak{f}$, we define the field of Hecke eigenvalues $H_\mathfrak{f}=\QQ(\{c(\mathfrak p)\}_{\mathfrak p})$, where $\mathfrak p$ ranges over all primes of $K$ not dividing the level of $\mathfrak{f}$. In replacement of the Eichler-Shimura construction, we have the following conjecture.

\begin{conjecture}[Conjecture 4.1 in \cite{sengunsiksek18}]
\label{conjecture:Bianchi modularity}
    Let $\mathfrak N\subseteq \mathcal O_K$ be a nonzero ideal. Let $\mathfrak{f}$ be a weight 2 Bianchi modular form of level $\mathfrak N$ that is non-trivial and new. Suppose that $H_{\mathfrak f} = \QQ$. Then, there exists either
    \begin{enumerate}
        \item an elliptic curve $E_\mathfrak{f}/K$ of conductor $\mathfrak N$ satisfying
        \[
        \#E_{\mathfrak f}(\OO_K/\mathfrak q) = 1+\mathbf N\mathfrak q - c(\mathfrak q) \text{ for all }\mathfrak q\nmid\mathfrak N, \text{ or}
        \]
        \item a simple abelian surface $A_{\mathfrak f}/K$ such that $\End(A)\otimes\QQ$ is an indefinite division quaternion algebra with center $\QQ$. The surface $A_{\mathfrak f}$ has conductor $\mathfrak N^2$ and satisfies
        \[
        \#A_{\mathfrak f}(\OO_K/\mathfrak q) = (1+\mathbf N\mathfrak q - c(\mathfrak q))^2 \text{ for all }\mathfrak q\nmid\mathfrak N.
        \]
    \end{enumerate}
\end{conjecture}

The abelian surfaces in (2) are said to have quaternionic multiplication, or QM for short. Positive examples of both cases have been found. A particularly striking result of \cite{schembri2019examples,schembri19thesis} says that there exist abelian surfaces $A/K$ with QM satisfying (2) which are not the twist of a base change of a surface defined over $\QQ$. The conjecture is of particular interest for diophantine applications, such as the generalized Fermat equation over imaginary quadratic fields \cite{MR4101578,MR4620323,MR5057874}.

From Conjecture~\ref{conjecture:Bianchi modularity} and the evidence that supports it, \textit{the degree of the number field $H_{\mathfrak f}$ does not determine the dimension of the abelian variety attached to $\mathfrak f$}. 
The purpose of this paper is to understand what conditions determine this dimension. In other words, we give a criterion discerning between Cases (1) and (2) above.

\subsection*{Main result}

Let $K$ be a number field and consider an abelian surface $A$ defined over $K$. As above, we say $A$ has QM if $\End(A)\otimes\QQ$ is an indefinite division quaternion algebra with center $\QQ$.
Suppose that the endomorphism ring $O:=\End(A)$ is a maximal order in $\End(A)\otimes\QQ$. A classical result of Ohta says that the action of the absolute Galois group of $K$ on the $p$-adic Tate module $T_p(A)$ gives a representation
\[
    \rho_{A,p}:\Gal(\bar K/K)\to (O\otimes_{\ZZ}\ZZ_p)^\times
\]
for each rational prime $p$.
The representation $\rho_{A,p}$ has Frobenius traces in $\ZZ$, and it is surjective for all but finitely many $p$ \cite{ohta74}. In particular, the image of $\rho_{A,p}$ is $\GL_2(\ZZ_p)$ for all but finitely many $p$.
Observe that these properties are also enjoyed by the $p$-adic Galois representation $\rho_{E,p}$ attached to an elliptic curve $E/K$ without complex multiplication. For this reason, $A$ was called a \emph{fake elliptic curve} by Serre. We usually only have access to the representations $\{\rho_{A,p}\}_p$ through a finite set of their traces, and so there was no criterion to distinguish them from the representations attached to an elliptic curve.

Even if $\End(A)$ is not a maximal order, for every prime $p$ there is a representation
\(
    \bar\rho_{A,p}:\Gal(\bar K/K)\to \GL_2(\FF_p)
\)
such that $A[p]^{ss}\simeq \bar\rho_{A,p}\oplus \bar\rho_{A,p}$. On the other hand, given any elliptic curve $E/K$ we may consider the mod-$p$ representation given by the action of Galois on $E[p]$, which we denote by $\bar\rho_{E,p}$.

Given a system of two-dimensional residual Galois representations $\{\bar\rho_p\}_p$, we say $p$ is a \emph{non-split Cartan prime} if $\bar\rho_{p}(\Gal(\bar K/K))$ is contained in the non-split Cartan subgroup $C_{ns}(p)$ of $\GL_2(\FF_p)$ (see Section~\ref{section:non-split Cartan curves} below). We let $\mathcal C(\{\bar\rho_p\}_p)$ be the set of non-split Cartan primes for the system. If $B/K$ is and abelian variety with an associated system of representations $\{\bar\rho_{B,p}\}_p$, we let $\mathcal C_B := \mathcal C(\{\bar\rho_{B,p}\}_p)$. We say that each $p\in\mathcal C_B$ is a \emph{non-split Cartan prime for $B$}.

The main result of this paper characterizes when an abelian surface is QM or the square of an elliptic curve in terms of the set of non-split Cartan primes.

\begin{theorem}
[Theorem~\ref{theorem:fake elliptic curves}]
    Let $K$ be an imaginary quadratic field over which Conjecture~\ref{weak conj} holds. Let $A/K$ be an abelian surface such that:
    \begin{enumerate}
        \item $A$ is not the twist of a base change of an abelian surface defined over $\QQ$, 
        \item $\End(A)\otimes\QQ = \End(A_{\bar K})\otimes\QQ$, and
        \item $\End(A)\otimes\QQ$ is either $\Mat_2(\QQ)$ or a division indefinite quaternion algebra with center $\QQ$.
    \end{enumerate}
    Let $\mathcal C_A$ be the set of primes at which $A$ has residual image contained in a non-split Cartan subgroup. Then $A$ is simple and has QM if and only if $\#\mathcal C_A\geq 2$. Otherwise, $A$ is isogenous to the square of an elliptic curve.
\end{theorem}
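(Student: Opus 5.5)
We split the proof into the two implications, using throughout that by hypotheses (2) and (3) there are only two possibilities. Either $A$ is simple with QM by an indefinite division algebra $B$ of some discriminant $D>1$, or $\End(A)\otimes\QQ=\Mat_2(\QQ)$, so that $A$ is isogenous over $K$ to $E^2$ for an elliptic curve $E/K$ without potential CM. In the second case $\bar\rho_{A,p}\simeq\bar\rho_{E,p}$ up to semisimplification for all $p$, hence $\mathcal C_A=\mathcal C_E$.

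\emph{QM implies $\#\mathcal C_A\ge 2$.} This is a local computation at the primes dividing $D$. For $p\mid D$, the algebra $B\otimes\QQ_p$ is the division quaternion algebra over $\QQ_p$. Its maximal order $O_p$ has residue ring $O_p/\pi O_p\simeq\FF_{p^2}$. The Galois action on $A[p]$ commutes with $O$, so after passing to a suitable isogenous surface with maximal order, the residual image of $\rho_{A,p}$ lands in $(O_p/pO_p)^\times$ acting on a module of rank one over $O_p/pO_p$. The semisimplification $\bar\rho_{A,p}$ then factors through $(O_p/\pi O_p)^\times=\FF_{p^2}^\times$ embedded in $\GL_2(\FF_p)$, which is exactly a non-split Cartan $C_{ns}(p)$. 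Hence every $p\mid D$ lies in $\mathcal C_A$. Since an indefinite division quaternion algebra over $\QQ$ ramifies at an even, nonzero number of finite primes, $D$ has at least two prime factors, and so $\#\mathcal C_A\ge 2$.

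\emph{$\#\mathcal C_A\ge2$ implies QM.} We argue by contradiction. Suppose $A\sim E^2$ and $p\ne q$ both lie in $\mathcal C_E$. Then $E$ gives a $K$-rational point on $X_{ns}(p)\times_{X(1)}X_{ns}(q)$, which covers $X_{ns}(pq)$. Because $K$ is quadratic, this point is a quadratic point on a curve defined over $\QQ$. Hypothesis (1) rules out the case where $E$ is a twist of a base change from $\QQ$: in that case the point would come from a rational point or a $\QQ$-curve, which is exactly what the notion of \emph{non-exceptional} quadratic point captures. Conjecture~\ref{weak conj}, together with the results of Siksek and Michaud-Jacobs on quadratic points on $X_{ns}(p)$, then says that $X_{ns}(pq)$ has no exceptional quadratic points for the relevant $pq$. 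For small levels we instead use the unconditional results on $X_{ns}(6)$, $X_{ns}(10)$ and $X_{ns}(15)$ proved in the paper. Hence the point must be non-exceptional: either a CM point, excluded by (2) since $E$ has no potential CM, or a point whose $j$-invariant makes $E$ a twist of a base change of a curve over $\QQ$, making $A$ such a twist as well and contradicting (1). So $\#\mathcal C_E\le 1$.

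The final sentence of the theorem then follows from the dichotomy in the first paragraph. The main obstacle is the second implication: translating "non-exceptional quadratic point" precisely into "$E$ is CM or a twist of a base change", and matching the conjectural input to all pairs $p,q$. The delicate part is handling $p=2,3$, where the non-split Cartan curves have genus $0$ and must be paired with the other prime.
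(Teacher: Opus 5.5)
Your proposal follows the paper's proof. The QM direction is the paper's proposition on QM surfaces: primes ramified in the quaternion algebra give non-split Cartan residual image, and there are at least two of them. The converse uses Conjecture~\ref{weak conj} to make the $K$-point of $X_{ns}(p_1p_2)$ attached to $E$ non-exceptional, so $j(E)\in\QQ$, which contradicts (1).

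The step you flag as the main obstacle is immediate. For a non-exceptional point $P$, $j(E)=j(\varrho_H(P))$ with $\varrho_H(P)\in X_H(\QQ)$ and $X_H\to X(1)$ defined over $\QQ$, so no ``$\QQ$-curve'' case arises and the CM alternative is superfluous. Also, the conjecture alone covers every pair $p\neq q$, so the Michaud-Jacobs input is not needed.
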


The conjecture in the statement refers to the non-existence of non-CM $K$-rational points on the non-split Cartan modular curves $X_{ns}(p)$. 

\subsection*{Quadratic points on modular curves}

One of the implications in Theorem~\ref{theorem:fake elliptic curves} is immediate, since $\bar\rho_{A,p}$ has non-split Cartan image at every prime $p$ at which the quaternion algebra ramifies. The converse is proved by observing that elliptic curves tend to have few non-split Cartan primes. 

Let $K$ be an imaginary quadratic field and let $E$ be an elliptic curve. Let $\mathcal C_E = \mathcal C(\{\bar\rho_{E,p}\}_p)$ be the set of non-split Cartan primes for $E$. For every $p\in\mathcal C_E$, we can put a level structure on $E$ so that it defines a $K$-rational point $P_E$ on the non-split Cartan modular curve $X_{ns}(p)$. 
For the primes $p=7,11,13$, Siksek and Michaud-Jacobs showed that every quadratic point on $X_{ns}(p)$ has rational $j$-invariant \cite{michaudjacobs22}. This is not the case for smaller primes, since the curves $X_{ns}(2)$, $X_{ns}(3)$ and $X_{ns}(5)$ all have genus 0, and so there are infinitely many elliptic curves with non-rational quadratic $j$-invariant and mod-$p$ image contained in $C_{ns}(p)$ for $p=2,3,5$.

We circumvent this by assuming that $E/K$ has non-split Cartan image at two different primes $p,q\in\{2,3,5\}$. Then $E$ defines a point on the non-split Cartan curve $X_{ns}(pq)$, and we can show that $j(E)\in\QQ$. 
More precisely, the curve $X_{ns}(pq)$ has a double cover onto a certain curve $X_{ns}^+(pq)$, and we are able to show that every quadratic point on the former is a pullback of a rational point on the latter. Every such point is said to be \emph{non-exceptional} (cf. Definition~\ref{definition:exceptional-point}).
Combining our result with that of Michaud-Jacobs, we obtain the following.

\begin{theorem}[Theorem~\ref{theorem:non-exceptional XnsN}]
    Let $N\in\{6,7,10,11,13,15\}$. Every quadratic point on $X_{ns}(N)$ is non-exceptional.
\end{theorem}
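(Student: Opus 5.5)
For $N\in\{7,11,13\}$ the statement is exactly the theorem of Siksek and Michaud-Jacobs \cite{michaudjacobs22}, so we invoke it and only treat the composite levels $N=pq\in\{6,10,15\}$. For these we follow the Mordell--Weil sieve / symmetric Chabauty strategy of \cite{michaudjacobs22}. First, we write down explicit models. By the fibre product decomposition $X_{ns}(pq)=X_{ns}(p)\times_{X(1)}X_{ns}(q)$, with both factors of genus $0$ and explicit $j$-maps, we obtain a plane or hyperelliptic model of $X_{ns}(pq)$ over $\QQ$. Equivalently, we can use canonical models computed from the relevant space of weight $2$ cusp forms (twists of forms on $\Gamma_0(N^2)$ via Chen's isogeny). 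We also record the involution $w$ coming from the normalizer $C_{ns}^+(pq)$ and the quotient map $X_{ns}(pq)\to X_{ns}^+(pq)$. Next, we compute the genera. We expect $X_{ns}^+(pq)$ to be of genus $0$ or $1$; if it is an elliptic curve of rank $0$ we must handle it differently from the rank-positive case.

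The key step is to compare $J_{ns}(pq)$ with $J_{ns}^+(pq)$. Quadratic points on $X=X_{ns}(pq)$ give rational points on $X^{(2)}$. Non-exceptional points are the pullbacks $\{P,wP\}$ of $X^+(\QQ)$, which form a copy of $X^+$ (or $\PP^1$) inside $X^{(2)}$. If $X^+$ has genus $0$, then $X$ is hyperelliptic with $w$ the hyperelliptic involution, or more generally it has a degree $2$ map to $\PP^1$. In that case all but finitely many quadratic points are non-exceptional, and we must determine the finitely many others. To do so, we decompose $J(X)$ up to isogeny into modular abelian varieties attached to newforms of level $p^2q^2$ (via Chen--Edixhoven). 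Then we identify the minus part $J^-=\ker(1+w)$ and compute its Mordell--Weil rank using analytic ranks, which is legitimate by Gross--Zagier/Kolyvagin when the factors are of $\GL_2$-type.

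If $\rank J^-(\QQ)=0$, the argument is short: for any quadratic point $Q$, the class $[Q+\bar Q]-[wQ+w\bar Q]$ lies in the torsion of $J^-(\QQ)$. We bound this torsion by reduction at a few good primes. Then a Riemann--Roch computation, or reduction mod small primes combined with a check of the finitely many torsion classes, shows that $Q+\bar Q=wQ+w\bar Q$, forcing $\bar Q=wQ$ or $Q$ fixed by $w$; both cases are non-exceptional. If the rank is positive, we instead apply relative symmetric Chabauty (Siksek / Box) at auxiliary primes together with a Mordell--Weil sieve. This uses a basis of differentials annihilating $J^+$, as in \cite{michaudjacobs22}.

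The main obstacle is the explicit computation for the level $15$ curve (the largest genus): producing a usable model and establishing that $J^-(\QQ)$ has rank $0$, or else making Chabauty work. A secondary difficulty is the small-characteristic issues at $p=2$ for $N=6,10$, where the model may have bad reduction at the sieving primes. We would try to choose sieving primes coprime to $2\cdot3\cdot5$, e.g.\ $7,11,13$. Finally, we list the quadratic points found in the search, including CM points, and check that each is a pullback from $X_{ns}^+(N)(\QQ)$.
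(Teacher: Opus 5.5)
\paragraph{Gap at the genus-one levels.} Your argument breaks down for $N=6,10$, which is where the real difficulty lies. There $X_{ns}^+(N)\cong\PP^1$, so $J^-=J$ is an elliptic curve with $J(\QQ)\simeq\ZZ/2\ZZ$, on which $w$ acts as $-1$.

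\begin{itemize}
\item Take any rational effective degree-2 divisor $D$ and any fibre $F$ of $\varrho_w$. Then $[D-wD]=2[D-F]=0$ automatically, so your torsion check carries no information.
\item The next step also fails. On a genus-one curve every degree-2 divisor moves in a pencil ($\ell(D)=2$), so $D\sim wD$ does not force $D=wD$.
\item Your claim that a degree-2 map to $\PP^1$ leaves only finitely many exceptional points is unfounded, for the same reason. Rational effective degree-2 divisors come in one $\PP^1_\QQ$-family for each class in $\Pic^2(X/\QQ)$. So there are either no exceptional quadratic points or infinitely many, according as the group $\Pic^0(X/\QQ)$ of classes of $\QQ$-rational divisors is trivial or not. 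Symmetric Chabauty is useless here for the same reason: no rational point of $X^{(2)}$ is isolated.
\end{itemize}

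The missing idea is to show that the nontrivial point of $J(\QQ)=\Pic^0(X_{\bar\QQ})^{\Gal(\bar\QQ/\QQ)}$ is \emph{not} represented by a $\QQ$-rational divisor. The paper does this using explicit points over $\QQ(\sqrt{-3})$ and a degree-2 map $\phi$ from $X_{ns}(N)$ to a conic $C$ with no real points. If the class were rational, Riemann--Roch would produce a quadratic point $S$ with $\phi(S)\in C(\QQ)$, which is impossible. Once $\Pic^0(X/\QQ)=0$ is known, every $S+S'$ is linearly equivalent to a fibre of $\varrho_w$, and hence is such a fibre over a rational point.

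Your fallback of reducing modulo small primes cannot replace this step. At a prime of good reduction every class in $J(\FF_p)$ is represented by an $\FF_p$-rational divisor, so the obstruction is invisible mod $p$.

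\paragraph{The case $N=15$.} Your expectation that $X^+$ has genus at most one is wrong: it has genus $2$, with $J^+(\QQ)\simeq\ZZ^2$. However, $\rank J(\QQ)=\rank J^+(\QQ)=2$, so your rank-zero branch does apply. Since $X_{ns}(15)$ is non-hyperelliptic, $D\sim wD$ does give $D=wD$. Two things are still missing.

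\begin{itemize}
\item You must rule out $D-wD\sim t$ for nonzero torsion $t\in\ker(\varrho_*)(\QQ)$. The paper only bounds $\#T\mid 2^4\cdot 3^4$ without determining $T$, and you give no procedure that is guaranteed to exclude each such $t$.
\item A quadratic point fixed by $w$ is not automatically non-exceptional, since its image under $\varrho_w$ is quadratic rather than rational. Such points must be excluded, not waved through.
\end{itemize}

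The paper avoids both issues with relative symmetric Chabauty. It uses differentials killed by $(1+w)^*$ to show that each known point is alone in its residue class. This is combined with a Mordell--Weil sieve with $M$ coprime to $6$, which needs only the index bound $I\mid 2^5\cdot 3^4$.
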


The quadratic points on the genus-1 curves $X_{ns}(6)$, $X_{ns}(7)$, $X_{ns}(10)$ and $X_{ns}(11)$ are determined by using their $\mathrm{Pic}^0$ and explicit maps to other genus-0 curves. The curves $X_{ns}(13)$ and $X_{ns}(15)$ have much higher genus, and they require an application Siksek's symmetric Chabauty \cite{siksek09}.

Since the computational requirements of Chabauty grow fast with the genus, the determination of quadratic points for $X_{ns}(p)$ or $X_{ns}(pq)$ with $p\geq 17$ remains completely open. Already for the first open level, the knowledge of the set of points $X_{ns}^+(17)(\QQ)$ from \cite{bdmtv23} should aid in this regard. On the other hand, we can show the following result for imaginary quadratic fields of class number $>1$.\footnote{If $K$ has class number 1 and $p$ does not split in $K$, then $X_{ns}(p)$ will have one $K$-point for each elliptic curve with CM by $K$.}

\begin{theorem}[Corollary~\ref{corollary:finiteness of nsC p}]
    Let $K$ be an imaginary quadratic field of class number $>1$. Then there exists a constant $C_K>0$ depending only on $K$ such that, for $p>C_K$, the set $X_{ns}(p)(K)$ is empty. 
\end{theorem}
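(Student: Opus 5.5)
The plan is to reduce to a uniform statement about elliptic curves with non-split Cartan image over $K$. A point of $X_{ns}(p)(K)$ is either a cusp, a CM point, or comes from an elliptic curve $E/K$ (up to twist) whose image $\bar\rho_{E,p}(G_K)$ lies in $C_{ns}(p)$. First I dispose of the cusps. The cusps of $X_{ns}(p)$ are defined over $\QQ(\zeta_p)$ and are Galois-conjugate over $\QQ$ in a single orbit of size $p-1$. Hence for $p\geq 5$ none is defined over a quadratic field, since $[\QQ(\zeta_p):\QQ]=p-1>2$.

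Next I dispose of the CM points. If $E$ has CM by an order in an imaginary quadratic field $F$ and gives a $K$-point, then $j(E)\in K$, so the class number of that order is at most $2$. This leaves finitely many $j$-invariants, a bounded set independent of $K$. For $j(E)$ rational, a $K$-rational level structure with image in $C_{ns}(p)$ forces the CM to be defined over $K$ itself: if $\bar\rho_{E,p}(G_K)$ is abelian and irreducible for large $p$, then $G_K$ cannot contain complex-conjugation-type elements swapping the Cartan. So $F=K$, and then $K$ has class number $1$ when $\OO_F$ is the maximal order. For non-maximal orders one uses class number $\le 2$ with $F=K$, which leaves finitely many $K$ to exclude by enlarging $C_K$. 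The cleaner route is that a CM point on $X_{ns}(p)$ rational over $K$ requires $j(E)\in K$ with CM field $K$, hence $j\in\QQ$ and $h(\OO)=1$; for class number of $K$ greater than $1$ the maximal order fails, and non-maximal orders of $K$ with class number $1$ have bounded conductor, so only finitely many $p$ (those inert in $K$ and dividing nothing special) remain. These are absorbed into $C_K$.

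The main step concerns non-CM $E/K$ with $\bar\rho_{E,p}(G_K)\subseteq C_{ns}(p)$. Here I would invoke the uniform bound for non-CM elliptic curves over a fixed number field having Cartan-type image. This is the key input, and I expect it to be the main obstacle. The approach I would try is Larson–Vaintrob style: for $p$ large, the image of inertia at primes above $p$ gives a character $\psi$ with $\bar\rho_{E,p}^{ss}$ built from it. Cartan image makes $\bar\rho_{E,p}$ abelian after at most a quadratic extension, so the associated characters are determined by CM-type data. Comparing Frobenius traces $a_{\mathfrak q}(E)$ with $\psi(\mathfrak q)+\psi'(\mathfrak q)$ modulo $p$ at a small prime $\mathfrak q$ of $K$, and using the Weil bound $|a_{\mathfrak q}|\le 2\sqrt{\mathbf N\mathfrak q}$, forces an exact equality for $p>C_K$. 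That equality means $E$ has CM by $K$, contradicting non-CM.

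Assuming GRH-free versions, the Larson–Vaintrob result gives such a $C_K$ unconditionally except when $K$ contains the Hilbert class field of a CM field, which is exactly the class number $1$ obstruction of the footnote. For $K$ of class number greater than $1$ the exceptional CM case cannot occur, so the bound applies. Combining the three cases gives $X_{ns}(p)(K)=\emptyset$ for $p>C_K$.
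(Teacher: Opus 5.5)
Your treatment of the cusps is fine and agrees with the paper. The main non-CM step, however, has a genuine gap.

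\textbf{What Larson--Vaintrob actually gives.} The unconditional theorem (Theorem~6.4 of \cite{lv14}, the one the paper uses) does not say that curves over $K$ cannot have non-split Cartan image for $p>C_K$. It only says that, for $p>LV_K$, the character $\psi$ with $\bar\rho_{E,p}\otimes\FF_{p^2}\simeq\psi\oplus\psi^p$ satisfies one of two alternatives: either $\psi^{12}=\chi_p^6$, or $\psi$ is congruent to the character of an elliptic curve over $K$ whose CM is defined over $K$. The class number hypothesis removes only the second alternative. The direct statement you want is \cite[Theorem~12]{lv14b}, which is proved under GRH; this is exactly the point of the paper's remark. So ``the Larson--Vaintrob result gives such a $C_K$ unconditionally'' is an overclaim.

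\textbf{Why your sketch does not close the argument.} An exact equality of Frobenius data at one or finitely many small primes $\mathfrak q$ does not imply that $E$ has CM. In the surviving case $\psi^{12}=\chi_p^6$, such an equality only says that the Frobenius eigenvalues at $\mathfrak q$ are $\sqrt{\mathbf N\mathfrak q}$ times roots of unity, which is essentially supersingularity at $\mathfrak q$. That gives no contradiction uniform in $E$.

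\textbf{The missing idea is local at $p$.} The inertia characters $\psi|_I$ and $\psi^p|_I$ are conjugate under $x\mapsto x^p$, so they coincide if they are $\FF_p^\times$-valued. For large $p$, potentially multiplicative and potentially ordinary reduction at $v\mid p$ would make inertia act through two distinct $\FF_p^\times$-valued characters. Hence non-split Cartan image forces potentially supersingular reduction. By Serre, $\psi|_{I_L}=\theta_2^d$ with $d\le 12$. Then $\theta_2^{12d}$ has order $(p^2-1)/\gcd(p^2-1,12d)$, roughly $p^2$, while $\chi_p^6$ has order dividing $p-1$. So $\psi^{12}=\chi_p^6$ fails for all $p$ beyond an explicit bound, and this is what makes the paper's result unconditional.

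\textbf{The CM step.} It reaches the right conclusion through a false intermediate claim. You say that non-maximal orders of $K$ of class number $1$ have bounded conductor, so only finitely many $p$ remain. But if such an order $\OO$ existed, a curve over $K$ with CM by $\OO$ defined over $K$ would have image in $C_{ns}(p)$ for every $p$ inert in $K$ and prime to the conductor. That is infinitely many $p$, and the statement would be false. The correct fact is that no such order exists: the ring class field of any order of $K$ contains the Hilbert class field, so $h_K\mid h(\OO)$. Hence $h_K>1$ gives $j(E)\notin K$ whenever the CM field is $K$. Combined with your (correct) observation that CM by a field $F\neq K$ makes $\bar\rho_{E,p}(\Gal(\bar K/K))$ non-abelian for large $p$, this disposes of CM points. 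In the paper this case is simply absorbed into the Larson--Vaintrob dichotomy, since no elliptic curve over $K$ has CM defined over $K$.
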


With Theorem~\ref{theorem:non-exceptional XnsN} in hand we are able to prove the following criterion.

\begin{theorem}[Theorem~\ref{theorem:exceptional set of j-invariants}]
    Let $p_1,p_2$ be distinct primes. There exists a finite set $S_{p_1,p_2}\subset \bar\QQ \setminus \QQ$ with the following property.
    If $K$ is an imaginary quadratic field and $E/K$ is an elliptic curve such that the image of $\bar\rho_{E,p_i}$ is contained in $C_{ns}(p_i)$ for $i=1,2$, then either $j(E)\in\QQ$, or
        $j(E)\in S_{p_1,p_2}$.

    Moreover, the set $S_{p_1,p_2}$ is empty if either $\{p_1,p_2\}\subset \{2,3,5\}$, or at least one of $p_1,p_2$ is in the set $\{7,11,13\}$.
\end{theorem}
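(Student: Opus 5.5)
The plan is to translate the hypothesis into a statement about quadratic points on a single modular curve. Put $N=p_1p_2$ and let $X_{ns}(N)$ be the non-split Cartan curve of level $N$, i.e.\ the fibre product of $X_{ns}(p_1)$ and $X_{ns}(p_2)$ over the $j$-line. Let $E/K$ have $\bar\rho_{E,p_i}(\Gal(\bar K/K))\subseteq C_{ns}(p_i)$ for $i=1,2$. After choosing level structures, $E$ gives a $K$-rational point $P_E\in X_{ns}(N)(K)$. Since $K$ is quadratic, $P_E$ is a quadratic point of $X_{ns}(N)$ unless it is already rational, in which case $j(E)\in\QQ$.

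Next, use the double cover $X_{ns}(N)\to X_{ns}^+(N)$ and the notion of exceptional points (Definition~\ref{definition:exceptional-point}). If $P_E$ is non-exceptional, then it is the pullback of a rational point $Q$ of $X_{ns}^+(N)$. The $j$-map factors through $X_{ns}^+(N)$, so $j(E)=j(Q)\in\QQ$.

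It then suffices to show that $X_{ns}(N)$ has only finitely many exceptional quadratic points. We define $S_{p_1,p_2}$ to be the set of their non-rational $j$-invariants; these are algebraic of degree at most $2$. Finiteness is the key step. For large $N$ the genus of $X_{ns}(N)$ is at least $2$, and I would invoke the standard dichotomy for quadratic points, due to Harris--Silverman and Abramovich--Harris. A curve of genus $\ge 2$ has infinitely many quadratic points only if it is hyperelliptic or bielliptic. Moreover, all but finitely many of them then arise as pullbacks of rational points under a degree-$2$ map to $\PP^1$ or to a positive-rank elliptic curve. By Faltings applied to $W_2$, the infinite families come from such degree-$2$ maps.

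One then has to show that any such map coincides with the quotient by the Cartan involution, or that the relevant quotient has only finitely many rational points. This is the main obstacle. I would try to rule out other hyperelliptic or bielliptic involutions by gonality bounds: Abramovich's linear lower bound on the gonality of modular curves in terms of the index, which exceeds $4$ once $N$ is large. For the finitely many small $N$ remaining, I would check the hyperelliptic and bielliptic involutions explicitly. I expect that for all $N=p_1p_2$ outside a finite list, $X_{ns}(N)$ is neither hyperelliptic nor bielliptic, except via its own involution, whose quotient $X_{ns}^+(N)$ has genus $\ge 2$. In that case non-exceptional points are finite in number too, which is harmless, since their $j$-invariants are rational anyway. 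The small cases with genus $\le 1$ are exactly the ones covered by Theorem~\ref{theorem:non-exceptional XnsN}.

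For the emptiness statement, if $\{p_1,p_2\}\subset\{2,3,5\}$ then $N\in\{6,10,15\}$, and Theorem~\ref{theorem:non-exceptional XnsN} gives that every quadratic point is non-exceptional, so $S=\emptyset$. If instead some $p_i\in\{7,11,13\}$, project $P_E$ to $X_{ns}(p_i)$. This gives a $K$-point, hence a rational or quadratic point, which is non-exceptional by Theorem~\ref{theorem:non-exceptional XnsN}. The same factorization argument as above then shows $j(E)\in\QQ$, so again the set is empty.
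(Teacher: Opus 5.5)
Your reduction (an $E/K$ gives a quadratic point of $X_{ns}(p_1p_2)$, and a non-exceptional point has rational $j$-invariant) and your treatment of the emptiness clause are the same as in the paper. One small slip: non-exceptional means pulled back from a rational point of \emph{some} $X_H$ with $[H:C_{ns}(N)]=2$, not necessarily $X_{ns}^+(N)$. This is harmless, since $j$ factors through every $X_H$.

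The genuine gap is the finiteness step, which is the entire content of the first assertion outside the computed levels. You flag it as the main obstacle and outline a strategy: classify all degree-$2$ maps to $\PP^1$ or to elliptic curves, compare them with the Cartan quotients, and check small $N$ by hand. You then only say that you ``expect'' the required non-hyperelliptic/non-bielliptic property. Nothing is verified, so as written the existence of a finite $S_{p_1,p_2}$ is not proved.

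The step closes easily, and the involution analysis is not needed. If both primes are $\le 13$ you are already in the emptiness clause, so assume $p_1\ge 17$.

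\emph{Along your route.} Since $-1\in C_{ns}(p)$ and $C_{ns}(p)\cap\SL_2(\FF_p)$ has order $p+1$, the index of $\Gamma_H$ (with $H=C_{ns}(N)$) in $\PSL_2(\ZZ)$ is $\prod_{p\mid N}p(p-1)\ge 2\cdot 17\cdot 16=544$. Abramovich's bound then gives $\mathrm{gon}_{\CC}X_{ns}(N)\ge \frac{7}{800}\cdot 544>4$. Hence $X_{ns}(N)$ is neither hyperelliptic nor bielliptic, and Harris--Silverman gives finitely many quadratic points in total, exceptional or not.

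\emph{The paper's route.} It is lighter and never works at level $N$. It projects to $X_{ns}(p_1)$, which is neither hyperelliptic nor bielliptic by Lemma~\ref{lemma:genera of Xns}(d), because $\Aut(X_{ns}(p))=\ZZ/2\ZZ$ for $p\ge 13$ and $X_{ns}^+(p)$ has genus $\ge 3$. Harris--Silverman applies directly there, and $S_{p_1,p_2}$ is taken inside the finite set of non-rational $j$-invariants of quadratic points of $X_{ns}(p_1)$.

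Either way, the ``small $N$'' you planned to check by hand are exactly the pairs with both primes $\le 13$. These are already covered by your emptiness argument.
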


\subsection*{Outline of the paper} We begin in Section~\ref{section:non-split Cartan curves} with a brief reminder on modular curves. We define the curves $X_{ns}(N)$ and $X_{ns}^+(N)$ for squarefree levels. We observe that our notation $X_{ns}^+(N)$ deviates slightly from classical sources in the literature, and instead we follow the notation used in \cite{hl2025} and the modular curves database in \cite[\href{https://beta.lmfdb.org/ModularCurve/Q/}{Modular Curves}]{lmfdb}.
We prove the result on quadratic points for levels $N=6$ and $10$ in Section~\ref{section:Xns6 10}. We then compute the quadratic points on $X_{ns}(15)$ in Section~\ref{sec:Xns15}. In Section~\ref{section:nsC primes} we discuss the implications for the set of non-split Cartan primes for an elliptic curve, and we prove Theorem~\ref{theorem:exceptional set of j-invariants}. Finally, we prove the characterization of fake elliptic curves in Section~\ref{section:fake elliptic curves}.

\subsection*{Computational tools and code availability}

Many computations in this paper were performed using Magma \cite{magma}. The code for computing the quadratic points on $X_{ns}(N)$ for $N=6,10,15$ is available at \url{https://github.com/3nr1c/nonsplit-Cartans}. In particular, the Chabauty and Mordell-Weil sieve code was adapted from \cite{githubMichaudJacobsQuadCartan}. The algorithm in \cite{zywina24} was used to obtain equations of modular curves, its implementation can be found in \cite{githubGitHubDavidzywinaOpenImage}. The packaged version of Zywina's implementation by Eran Assaf was also used, see \url{https://github.com/assaferan/OpenImage.git}.

\subsection*{Notation} Throughout the paper $K$ denotes a number field and $\bar K$ a fixed algebraic closure. The absolute Galois group is denoted by $\Gal(\bar K/K)$. For an abelian variety $A/K$, $\End(A)$ is the ring of endomorphisms of $A$ defined over $K$. If $X/K$ is a scheme and $L/K$ is an extension, we denote the base change of $X$ to $L$ by $X_K := X\times_{\operatorname{Spec} K}\operatorname{Spec} L$. The set of $L$-points of $X$ is denoted by $X(L)$. If $X/K$ is a curve, $X^{(2)}$ denotes its symmetric square, and we identify $X^{(2)}(K)$ with the set of quadratic points of $X$. Equivalence of divisors on a curve is denoted by $\sim$. If $D$ is a divisor on a curve, $\mathcal L(D)$ denotes the Riemann-Roch space of functions $f$ such that $\mathrm{div}(f)+D\geq 0$. 

\subsection*{Acknowledgements}

I thank Xevi Guitart for asking the question of whether the number of non-split Cartan primes characterizes quaternionic multiplication. I also thank Eran Assaf for pointing me to the code in \cite{githubGitHubDavidzywinaOpenImage}, which was used to obtain the equations and coverings of the modular curves considered. I thank Sam Frengley for his comments. I acknowledge funding from the project PID2022-137605NB-I00 and 2021 SGR 0146.

\section{Non-split Cartan modular curves}
\label{section:non-split Cartan curves}

We begin by briefly recalling the general setup of modular curves. Let $N$ be an integer, and let $H\subseteq \GL_2(\ZZ/N\ZZ)$. Let $L$ denote a number field. There exists a curve $Y_H$ whose $L$-rational points parametrize elliptic curves $E/L$ such that the mod-$N$ representation $\bar\rho_{E,N}:\Gal(\bar L/L)\to \GL_2(\ZZ/N\ZZ)$ has image contained in $H$. We denote the compactification of $Y_H$ by $X_H$, this is obtained by adding a finite set of cusps. The curve $X_H$ is defined over $\QQ$ as long as $\det H = (\ZZ/N\ZZ)^\times$. If $H\subseteq G\subseteq \GL_2(\ZZ/N\ZZ)$ we have a covering $X_H \to X_G$ of degree $[G:H]$. If we take $G := \GL_2(\ZZ/N\ZZ)$, then $X_G\simeq \PP^1$, and the covering $j:X_H\to \PP^1$ is called the $j$-invariant.
The points in $X_H(\CC)$ have a simple description: one takes $\Gamma_H\subseteq \SL_2(\ZZ)$ to be the lift of the group $H\cap \SL_2(\ZZ/N\ZZ)$, and then $X_H(\CC)=(\mathcal H\cup \PP^1(\QQ))/\Gamma_H$. 

In this paper we will only deal with the non-split Cartan modular curves $X_{ns}(N)$ and $X_{ns}^+(N)$ (with notation as in \cite{hl2025}), which we now define. Let $p$ be a prime number and fix an $\FF_p$-basis of $\FF_{p^2}$. The non-split Cartan subgroup modulo $p$, $C_{ns}(p)$, is the image of $\FF_{p^2}^\times$ in $\GL_2(\FF_p)$ by the action of $\FF_{p^2}^\times$ on $\FF_{p^2}$. By taking another basis, we obtain a conjugate subgroup. If we let $\varepsilon\in\FF_p^\times$ be a multiplicative generator, then we may take 
\[
C_{ns}(p) = \left\{
    \begin{pmatrix}
        x & \varepsilon y\\
        y & x
    \end{pmatrix}
    \mid
    x,y \in \FF_p,\ (x,y)\neq(0,0)
\right\}.
\]
In $\GL_2(\FF_{p^2})$, the group $C_{ns}(p)$ is conjugate to the subgroup consisting of the matrices $\left(
\begin{smallmatrix}
    \alpha&0\\
    0&\alpha^p
\end{smallmatrix}
\right)$, for all $\alpha\in\FF_{p^2}^\times$.
We denote by $C_{ns}^+(p)$ the normalizer of $C_{ns}(p)$ in $\GL_2(\FF_p)$. We have $[C_{ns}^+(p):C_{ns}(p)]=2$, and $C_{ns}^+(p)$ is generated by $C_{ns}^+(p)$ and the matrix $\left(\begin{smallmatrix}
    1 & 0\\
    0 & -1
\end{smallmatrix}\right)$.

Let $N$ be a composite squarefree integer. The non-split Cartan group of level $N$, denoted $C_{ns}(N)$, is defined as the image of $\prod_{p\mid N}C_{ns}(p)$ via the isomorphism
\[
    \prod_{p\mid N}\GL_2(\FF_p)\simeq \GL_2(\ZZ/N\ZZ).
\]
\begin{definition}
    For a squarefree integer $N\geq 2$, the non-split Cartan modular curve $X_{ns}(N)$ is the curve $X_H$ defined by the group $H=C_{ns}(N)$.
\end{definition}

The curve $X_{ns}(N)$ is always defined over $\QQ$, and it has no non-cuspidal points defined over any field with a real embedding. The reason is that the action of complex conjugation corresponds through $\bar\rho_{E,p}$ to a matrix with eigenvalues $1$ and $-1$, which is not in $C_{ns}(p)$. Over a field $L$ containing an imaginary quadratic field $K$ of class number one, then $X_{ns}(p)(L)$ has a CM, non-cuspidal point if $p$ is inert or ramified in $K$.

We denote by $C_{ns}^*(N)$ be the normalizer of $C_{ns}(N)$ in $\GL_2(\ZZ/N\ZZ)$. We have that 
\(
    [C_{ns}^*(N):C_{ns}(N)]=2^{\omega(N)},
\)
where $\omega(N)$ is the number of primes dividing $N$. The group $C_{ns}^*(N)$ is isomorphic to the product $\prod_{p\mid N}C_{ns}^+(p)$, we have
\[
    C_{ns}^*(N)/C_{ns}(N) \simeq (\ZZ/2\ZZ)^{\omega(N)}.
\]
Every nonzero element of this quotient generates a group $H\subset C_{ns}^*(N)$ with $[H:C_{ns}(N)]=2$, and so there are $2^{\omega(N)}-1$ such subgroups. Each $H$ gives us a modular curve $X_H$ with a degree-2 map
\(
    \varrho_H:X_{ns}(N)\to X_H.
\)
Observe that, when $N=pq$ is the product of two primes, we obtain three degree-2 covers. Given a point $P\in X_H(\QQ)$, every point $Q$ in the preimage $\varrho^{-1}(P)$ is defined over a quadratic number field.

\begin{definition}\label{definition:exceptional-point}
    Let $P\in X_{ns}(N)(\bar\QQ)$ be a quadratic point. We say $P$ is\\\emph{non-exceptional} if
    \[
        P\in \bigcup_{[H:C_{ns}(N)]=2}\varrho_H^{-1}(X_H(\QQ)).
    \]
    Otherwise, we say $P$ is \emph{exceptional}.
\end{definition}

We distinguish the following curve $X_H$ doubly covered by $X_{ns}(N)$. Let $W$ be a matrix in $C_{ns}^*(N)$ which restricts to an element in $C_{ns}^+(p)\setminus C_{ns}(p)$ for each $p\mid N$.
\begin{definition}
    The non-split Cartan plus modular curve $X_{ns}^+(N)$ is the curve $X_H$ defined by the group $H=C_{ns}^+(N) := \langle C_{ns}(N), W\rangle$. 
\end{definition}
We denote by $w:X_{ns}(N)\to X_{ns}(N)$ the involution induced by $W$, and we denote by $\varrho_w:X_{ns}(N)\to X_{ns}^+(N)$ the corresponding double cover.

The following lemma lists the smallest-genus curves $X_{ns}(N)$ for squarefree $N$. Recall that a curve $X$ is said to be hyperelliptic (resp. bielliptic) if there exists a double cover $X\to \PP^1$ (resp. $X\to E$ for some elliptic curve $E$).

\begin{lemma}\label{lemma:genera of Xns}
    \hfill
    \begin{enumerate}[(a)]
        \item The curves $X_{ns}(2)$, $X_{ns}(3)$ and $X_{ns}(5)$ have genus 0.
        \item 
    The non-split Cartan modular curves of squarefree level and genus 1 are $X_{ns}(6)$, $X_{ns}(7)$ and $X_{ns}(10)$.
    \footnote{There is in fact only one more non-split Cartan modular curve of genus 1, namely $X_{ns}(8)$. We have stated the lemma for squarefree level since a slightly more general definition is needed for the Cartan group of level $8$.}
        \item For every squarefree integer $N\geq 11$, the curve $X_{ns}(N)$ has genus at least 4. 
        \item For every prime $p\geq 13$, the curve $X_{ns}(p)$ is neither hyperelliptic nor bielliptic.
    \end{enumerate}
\end{lemma}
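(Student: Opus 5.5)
Parts (a)--(c) follow from the standard genus formula for a modular curve $X_H$,
\[
g(X_H) = 1 + \frac{\mu}{12} - \frac{\nu_2}{4} - \frac{\nu_3}{3} - \frac{\nu_\infty}{2},
\]
where $\mu = [\PSL_2(\ZZ):\pm\Gamma_H]$ is the degree of $j$, and $\nu_2,\nu_3,\nu_\infty$ are the numbers of elliptic points of order $2$, $3$ and of cusps. For $C_{ns}(p)$ the local data are classical (Baran). We have $\mu_p = p(p-1)$, and $\nu_\infty = p-1$. The elliptic point counts are $\nu_2 = 1+\left(\frac{-1}{p}\right)$ up to the convention at $p=2$, and likewise $\nu_3 = 1+\left(\frac{-3}{p}\right)$; more precisely, they count fixed points of an order-$2$ (resp. $3$) element of $\SL_2(\FF_p)$ acting on the cosets of $C_{ns}(p)\cap\SL_2$. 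For squarefree $N$, the set $\SL_2(\ZZ/N\ZZ)/(C_{ns}(N)\cap \SL_2)$ is the product of the local coset spaces, so $\mu$ is multiplicative. The numbers of fixed points of the elliptic elements $S$ and $ST$ are also multiplicative, being products of local fixed-point counts. Cusps are orbits of $\langle T\rangle$ on the product, and these can be counted by the Chinese remainder theorem from the local orbit lengths, which all equal $p$ on a set of size $p(p-1)$. With this one computes $g=0$ for $p=2,3,5$ and $g=1$ for $p=7$ and $N=6,10$. One also checks that every other squarefree level has positive genus larger than $1$, giving (b).

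For (c), I would first obtain a general lower bound. Since $\nu_2,\nu_3 \le 2^{\omega(N)}$ and $\nu_\infty \le \mu/\min_p p$ is roughly $\mu/N$ times a constant, we get
\[
g \ge 1+\frac{\mu}{12} - c\,2^{\omega(N)} - \frac{\mu}{2\,p_{\min}},
\]
with $\mu = \prod_{p\mid N} p(p-1)$. This grows like $N^2$, so $g\ge 4$ holds for all $N$ beyond an explicit bound. The finitely many remaining squarefree $N\ge 11$ (namely $11,13,14,15,17,19,21,22,\dots$) are then checked directly; the formula gives $g(X_{ns}(11))=4$ and $g(X_{ns}(15))\ge 4$. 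Making the cusp count precise for composite $N$ is the only fiddly point, and I would cross-check the small cases against the LMFDB and Zywina's code.

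For (d), I would use the existing literature rather than redo it. Hyperelliptic and bielliptic modular curves with $\Gamma_H$ containing $\Gamma(p)$ can be ruled out by the Castelnuovo--Severi/gonality bound. A curve of genus $g$ that is hyperelliptic or bielliptic has $\CC$-gonality at most $4$. On the other hand, Abramovich's bound gives gonality at least $\frac{7}{800}\mu$ (or the Kim--Sarnak improvement $\ge \frac{975}{4096}\cdot\frac{\mu}{12}$, roughly $\mu/50$). With $\mu=p(p-1)$ this exceeds $4$ once $p(p-1)\gtrsim 230$ with Kim--Sarnak, that is for $p\ge 17$. The case $p=13$ must then be handled separately. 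I expect this to be the main obstacle, since $X_{ns}(13)$ has genus $8$ and the gonality bound is too weak there. For $p=13$, I would instead use the explicit canonical model of $X_{ns}(13)$ (or of $X_{ns}^+(13)$, genus $3$, the non-hyperelliptic quartic of Baran/BDMTV). By Castelnuovo--Severi, a bielliptic or hyperelliptic involution on $X_{ns}(13)$ of genus $8$ must lie in its automorphism group, which is known to be the modular one (Dose et al.). It therefore suffices to check that no involution in the normalizer quotient has a quotient of genus $0$ or $1$. The only modular involution is $w$, whose quotient $X_{ns}^+(13)$ has genus $3$, so this completes (d).
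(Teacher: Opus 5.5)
For (a)--(c) you take the same route as the paper, which simply quotes the resulting closed formula from Hamakiotes--Lau. However, two of your inputs are wrong.

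First, your elliptic-point counts are those of the \emph{split} Cartan. An elliptic element fixes a coset of $C_{ns}(p)$ only if its eigenvalues lie in $\FF_{p^2}\setminus\FF_p$. So locally $\nu_2=2$ if $p\equiv 3\pmod 4$ and $\nu_2=0$ otherwise, and $\nu_3=2$ if $p\equiv 2\pmod 3$ and $\nu_3=0$ otherwise; for $p\nmid 6$ these are $1-\left(\frac{-1}{p}\right)$ and $1-\left(\frac{-3}{p}\right)$. With your signs the formula gives, for instance, $g(X_{ns}(7))=1+\frac{42}{12}-\frac{2}{3}-3=\frac{5}{6}$. So the genera you quote do not follow from what you wrote.

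Second, in (c) the estimate $\nu_\infty\le\mu/p_{\min}$ makes your lower bound $1+\mu\bigl(\frac{1}{12}-\frac{1}{2p_{\min}}\bigr)-\cdots$ negative whenever $2$, $3$ or $5$ divides $N$. That is infinitely many levels, so there is no cutoff beyond which you are done. Use instead the exact count your CRT argument already gives. Every cusp has width $N$, so $\nu_\infty=\mu/N=\varphi(N)$ with $\mu=N\varphi(N)$, and $g=1+\frac{(N-6)\varphi(N)}{12}-\frac{\nu_2}{4}-\frac{\nu_3}{3}$ with $\nu_2,\nu_3\in\{0,2^{\omega(N)}\}$. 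This is the paper's formula, and (a)--(c) then follow by a short check.

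Part (d) has a genuine gap at $p=17$ and $p=19$. Abramovich's bound is $\operatorname{gon}_{\CC}(X_H)\ge\frac{\lambda_1}{24}\mu$; this is where $\frac{7}{800}=\frac{1}{24}\cdot\frac{21}{100}$ comes from. Kim--Sarnak's $\lambda_1\ge\frac{975}{4096}$ therefore yields $\frac{975}{98304}\mu\approx\mu/101$, not $\mu/50$. A modest improvement of $\lambda_1$ could not roughly double the $\frac{7}{800}\approx\frac{1}{114}$ constant anyway. To exceed $4$ you need $p(p-1)>403$, i.e.\ $p\ge 23$. For $p=17$ and $p=19$ you only get $\operatorname{gon}_{\CC}\ge 3$ and $\ge 4$ respectively, which excludes hyperelliptic but not bielliptic.

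The repair is the argument you reserve for $p=13$, which works uniformly:
\begin{enumerate}[(i)]
\item A degree-$2$ map of curves is always the quotient by an involution; Castelnuovo--Severi plays no role here.
\item Dose--Lido--Mercuri show that $\Aut(X_{ns}(p))$ has order $2$, generated by $w$, for \emph{every} prime $p\ge 13$.
\item $X_{ns}^+(p)$ has genus at least $3$ for $p\ge 13$.
\end{enumerate}
This is exactly the paper's proof of (d), and it makes the gonality bound unnecessary.
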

\begin{proof}
    Let $N$ be a squarefree integer. From \cite[Table~1]{hl2025} we have the genus formula
    \begin{align*}
        g(X_{ns}(N)) = 1 + \frac{(N-6)\varphi(N)}{12} - \frac{\prod_{p\mid N}\left(1-\left(\frac{-1}{p}\right)\right)}{4} - \frac{\varepsilon_3}{3},
    \end{align*}
    where $\varepsilon_3 = 2^{\omega(N)}$ if $p\equiv 2\pmod 3$ for all $p\mid N$, and $\varepsilon_3=0$ otherwise. It is easy to check that $X_{ns}(6)$, $X_{ns}(7)$ and $X_{ns}(10)$ have all genus 1. a short computation shows that for squarefree $N\geq 11$, $X_{ns}(N)$ has genus at least 4.

    We now show (d). Let $p$ be a prime. The existence of a double cover of $X_{ns}(p)$ onto some curve $C$ implies that there must be an involution $w$ on $X_{ns}(p)$. For $p\geq 13$, \cite[Theorem~5.11]{dlm22} gives that $\Aut(X_{ns}(p))=\ZZ/2\ZZ$. Hence the only double cover from $X_{ns}(p)$ is the one given by the normalizing involution, $\varrho:X_{ns}(p)\to X_{ns}^+(p)$. The genus formula for $X_{ns}^+(p)$ asserts that this curve has genus $\geq 3$ for $p\geq 13$, and therefore $X_{ns}(p)$ is neither hyperelliptic nor bielliptic.
\end{proof}

\section{Exceptional quadratic points}
\label{section:Xns6 10}
Our goal is to prove the following result.

\begin{theorem}\label{theorem:non-exceptional XnsN}
    Let $N\in\{6,7,10,11,13,15\}$. Every quadratic point on $X_{ns}(N)$ is non-exceptional.
\end{theorem}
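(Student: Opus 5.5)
For $N=7,11,13$ the statement is due to Siksek and Michaud-Jacobs \cite{michaudjacobs22}, who show that every quadratic point on $X_{ns}(p)$ for these primes comes from a rational point of $X_{ns}^+(p)$. I would simply cite this. The remaining levels are $N=6,10,15$, which split into two regimes according to the genus.

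\textbf{Genus one: $N=6,10$.} By Lemma~\ref{lemma:genera of Xns}(b), $X_{ns}(6)$ and $X_{ns}(10)$ have genus $1$.

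First, I would use Zywina's algorithm \cite{zywina24} to write down models for $X_{ns}(N)$ and for the three intermediate curves $X_H$ with $[H:C_{ns}(N)]=2$, together with the quotient maps $\varrho_H$.

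If $X_{ns}(N)$ has a rational point (a cusp, say), it becomes an elliptic curve $E$. A quadratic point $P$ with Galois conjugate $P^\sigma$ gives a rational effective divisor $P+P^\sigma$ of degree $2$. Choosing a rational degree-$2$ divisor $D_0$, the map $P+P^\sigma\mapsto [P+P^\sigma-D_0]\in E(\QQ)$ has fibres the linear systems $|D|\cong\PP^1$. Hence $X_{ns}(N)^{(2)}(\QQ)$ is a finite union of $\PP^1$'s indexed by $E(\QQ)$.

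Next, I would compute $E(\QQ)$, which I expect to be finite (rank $0$). For each class $[D]$, the complete linear system $|D|$ is the pencil of fibres of a degree-$2$ map $X_{ns}(N)\to\PP^1$ defined over $\QQ$. I would identify each such map with some $\varrho_H$ followed by an isomorphism $X_H\cong\PP^1$, by checking that its deck involution is one of the modular involutions induced by $C_{ns}^*(N)$. Then every quadratic point lies in $\varrho_H^{-1}(X_H(\QQ))$, i.e.\ it is non-exceptional.

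If $X_{ns}(N)$ has no rational point, one still has rational degree-$2$ divisors, namely pullbacks along the $\varrho_H$. The same argument then runs through $\Pic^2$, which is a torsor for the Jacobian.

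\textbf{Genus eight: $N=15$.} Here I would follow the method of Siksek's symmetric Chabauty \cite{siksek09}, as adapted by Michaud-Jacobs \cite{githubMichaudJacobsQuadCartan}.

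First, I would find the genus of the quotients $X_H$ and look for a quotient $X_H$ whose Jacobian has Mordell--Weil rank $0$, or failing that a quotient $J(X_{ns}(15))\to A$ of rank $0$. Over $\QQ$ the rank of $J(X_{ns}(15))$ is likely positive, coming from $X_{ns}^+(15)$, which has infinitely many points or rank-$\geq 1$ Jacobian. A rank-$0$ factor such as the minus part $(1-w)J$ lets one apply the "relative" symmetric Chabauty argument.

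Second, I would list the known quadratic points: the pullbacks from $X_H(\QQ)$ for each $H$, and the CM and cuspidal points. Then I would compute generators, or a finite-index subgroup, of the relevant Mordell--Weil group using differences of these points and cusps.

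Third, I would run a Mordell--Weil sieve over several primes of good reduction ($p\nmid 15$) to show that every rational point of $X^{(2)}$ lies in the residue class of a known point. At each known non-pullback point, I would verify Siksek's Chabauty criterion at a suitable prime, such as $p=7$ or $11$. Positive-dimensional families, namely the pullbacks $\varrho_H^{*}(X_H(\QQ))$ when some $X_H$ is $\PP^1$ or of positive rank, need the variant of the criterion that allows $\PP^1$'s inside $X^{(2)}$; there one checks that the residue disc is covered by that family.

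The main obstacle is this last case. Because $X_{ns}(15)$ has genus $8$, three double covers, and possibly infinitely many non-exceptional quadratic points, one must get provably correct generators and make the sieve terminate. If the full Jacobian has too large a rank, I would try working with the quotient map $J\to J/\varrho_w^*J(X_{ns}^+(15))$, and choose sieving primes where the reduced group orders are smooth.
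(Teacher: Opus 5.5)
For $N=7,11,13$ and $N=15$ your plan is the paper's. You cite Michaud-Jacobs for the primes, and for $X_{ns}(15)$ you run relative symmetric Chabauty with respect to $w$ (anti-invariant differentials, valid because the $w$-minus part of $J$ has rank $0$), followed by a Mordell--Weil sieve. A few corrections there:
\begin{itemize}
\item $X_{ns}(15)$ has genus $7$, not $8$.
\item No positive-dimensional families occur. The two quotients other than $X_{ns}^+(15)$ dominate the pointless conics $X_{ns}(5)$ and $X_{ns}(3)$, and $X_{ns}^+(15)$ has genus $2$. So all $14$ known pairs are isolated pullbacks, and the relative criterion is the one applied at every one of them; the plain Siksek criterion necessarily fails at pullback points.
\item The inputs you leave open are supplied in the paper as follows: the isogeny $J_{ns}(15)\sim J_0^{new}(225)$ together with Kolyvagin--Logachev gives $\rank J(\QQ)=\rank J^+(\QQ)=2$; a torsion bound from reduction mod $11$ and $13$ gives index $I\mid 2^5\cdot 3^4$ for $\varrho^*J^+(\QQ)$; and Frengley's determination of $X_{ns}^+(15)(\QQ)$ supplies the known points.
\end{itemize}

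For $N=6,10$ your route genuinely differs. Your rational-point branch never occurs, since both curves map to pointless conics. The real issue is that rational divisor classes of degree $0$ form a proper subgroup of $J(\QQ)\simeq\ZZ/2\ZZ$.

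The paper proves $\Pic^0(X_{ns}(N)/\QQ)=0$ by hand. It represents the nontrivial point of $J(\QQ)$ by $R-R'$ (resp.\ $R'-P$) over $\QQ(\sqrt{-3})$. It then shows, via an explicit Riemann--Roch space, that a rational representative would give a quadratic point over a rational point of an auxiliary pointless conic $C$. After that, every $S+S'$ is equivalent to a fibre of $\varrho_w$.

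You instead enumerate $\Pic^2_X(\QQ)=c_w+J(\QQ)$, where $c_w$ is the fibre class of $\varrho_w$, and match each class with a modular involution. This works, but you should argue the matching rather than leave it as a check. A rational degree-$2$ class $c$ gives the rational involution $P\mapsto c-P$, which has fixed points, and every rational involution with fixed points comes from exactly one such $c$. Of the three modular involutions, two have fixed points (quotients $X_{ns}^+(N)\cong\PP^1$ and $X_{ns}(3)$, resp.\ $X_{ns}(5)$) and one is a translation (quotient $E_1$). Since $\#J(\QQ)=2$, the two classes are exactly the fibre classes of these two modular maps. The second class has a pointless conic as its linear system, so it contains no $S+S'$. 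This is where your phrase ``followed by an isomorphism $X_H\cong\PP^1$'' must be relaxed.

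Your argument is more conceptual. It identifies the obstruction as a modular quotient (by this count the paper's $C$ is in fact $X_{ns}(3)$, resp.\ $X_{ns}(5)$) and avoids explicit $2$-torsion divisors. However, it relies on $\#J(\QQ)$ matching the number of genus-$0$ quotients. The paper's argument needs only $\varrho_w$ and some pointless conic, and it proves the stronger intermediate fact $\Pic^0(X_{ns}(N)/\QQ)=0$.
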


The cases where $N$ is prime were done in \cite{michaudjacobs22}. We prove the composite cases in a separate section for each curve. We begin by giving an outline of the proof in \S\ref{subsection:strategy}, which guides both cases. We show that all quadratic points are non-exceptional when $N=6$ in \S\ref{subsection:Xns6}, and when $N=10$ in \S\ref{subsection:Xns10}. The case $N=15$ requires more work, and relies on the computation of the set $X_{ns}^+(15)(\QQ)$ and the technique of Siksek's symmetric Chabauty.

\subsection{Strategy for the genus 1 cases}
\label{subsection:strategy}
In \cite[\S3]{michaudjacobs22}, it was shown that the genus 1 curve $X_{ns}(7)$ has no exceptional quadratic points. Our method to treat the genus 1 curves $X_{ns}(6)$ and $X_{ns}(10)$ is similar to the one used in \emph{loc. cit.}, we first give an outline of the steps. Let $N=6$ or $10$.
\begin{enumerate}
    \item We find a modular curve $C$ of genus 0 with no points over $\QQ$, together with a morphism $\phi:X_{ns}(N)\to C$.
    \item In both cases, we show that $\Jac(X_{ns}(N))(\QQ)\simeq\ZZ/2\ZZ$. We identify a quadratic field $K/\QQ$ and a pair of points $T_1,T_2$ such that the divisor $T_1-T_2$ represents the nontrivial element of the Jacobian. 
    \item We use the points on $X_{ns}(N)(K)$ and the curve $C$ to prove that 
    \[\Pic^0(X_{ns}(N)/\QQ)=0.\]
    \item We identify explicitly the morphism $\varrho_w:X_{ns}(N)\to X_{ns}^+(N)$.
    \item We use the fact that $\Pic^0=0$ and the morphism $\rho$ to show that every quadratic point on $X_{ns}(N)$ is non-exceptional.
\end{enumerate}

\subsection{\texorpdfstring{$X_{ns}(6)$}{Xns(6)}}
\label{subsection:Xns6}
In this section we prove that all quadratic points on $X_{ns}(6)$ are non-exceptional. As listed in \cite[\href{https://beta.lmfdb.org/ModularCurve/Q/6.12.1.a.1/}{Modular Curve $X_{ns}(6)$}]{lmfdb}, this curve can be given by an intersection of quadrics in $\PP^3$:
\begin{equation}
\label{eq:Xns6}
X_{ns}(6):\begin{cases}
    0=-18xw + 3y^2 + z^2 + zw + w^2,\\
    0=36x^2 + 3xw - y^2.
    \end{cases}
\end{equation}
We begin by looking at some features of the curve $X_{ns}(6)$ explicitly.
The following six points on $X_{ns}(6)(\QQ(\sqrt{-3}))$ are easily found by hand:
\begin{align*}
    P = (1:6:6\sqrt{-3}:0),\quad
    &P' = (1:6:-6\sqrt{-3}:0),\\
    Q = (1:-6:6\sqrt{-3}:0),\quad
    &Q' = (1:-6:-6\sqrt{-3}:0),\\
    R = (0:0:\frac{-1+\sqrt{-3}}{2}:1),\quad
    &R' = (0:0:\frac{-1-\sqrt{-3}}{2}:1).
\end{align*}
It is easily checked that $R$ and $R'$ are the only points on this model of $X_{ns}(6)$ such that $x=0$, while $P,P',Q,Q'$ are the only points such that $w=0$.
By using the \texttt{Jacobian} function in Magma, we find that $\Jac(X_{ns}(6))$ is the elliptic curve
\[
    E:y^2 = x^3 + 1/144x^2 - 1/15552x - 1/2239488.
\]
As a sanity check, we let $P$ be the point at infinity, and we use Magma's function \texttt{EllipticCurve(C, P)} to find that a Weierstrass equation for $X_{ns}(6)_{\QQ(\sqrt{-3})}$ is
\[
    E':s^2 = r^3 + \frac{9\sqrt{-3}-9}{8}r^2 + \frac{-3\sqrt{-3}-3}{8}r + \frac{1}{16},
\]
and we check that $E'\simeq E_{\QQ(\sqrt{-3})}$.
With these models we check that $P,P',Q,Q',R$ and $R'$ are all the points in $X_{ns}(6)(\QQ(\sqrt{-3}))$. 

As explained in Section~\ref{section:non-split Cartan curves}, the curve $X_{ns}(6)$ has three quadratic subcovers. These are computed in \emph{loc. cit.} to be the following:
\begin{itemize}
    \item The genus 0 curve $X_{ns}(3)$, which has no rational points.
    \item The genus 0 curve $X_{ns}^+(6)$, which is isomorphic to $\PP^1$.
    \item An elliptic curve $E_1$ of rank 0 over $\QQ$.
\end{itemize}

The $j$-invariant map $X_{ns}(6)\to X(1)$ is described by
\begin{align*}
j = 
2^4\cdot 3^4\cdot
\frac{w^3}{216x^2w-36xy^2+9xw^2-3wy^2}.
\end{align*}
The point is that $X_{ns}(6)$ has an obvious degree-2 morphism to the genus 0 curve $36x^2+3xw-y^2=0$ (from the expression \eqref{eq:Xns6}), namely
\begin{align*}
    \varrho:(x:y:z:w) \mapsto (x:y:w).
\end{align*}
The second expression for the $j$-invariant shows that $j$ factors through $\varrho$. We observe that the curve $36x^2+3xw-y^2=0$ above is isomorphic to $\PP^1$: it has the rational point $(x:y:w)=(0:0:1)$. As $X_{ns}^+(6)$ is the only genus 0 quadratic subcover of $X_{ns}(6)$ which is isomorphic to $\PP^1$, this shows that $\varrho$ is the quadratic cover $X_{ns}(6)\to X_{ns}^+(6)$ up to an automorphism of $\PP^1$. Even more, we have that $\varrho(R)=\varrho(R')=(0:0:1)$, hence the points $R$ and $R'$ are non-exceptional. Similarly, we see that the points $P,P',Q$ and $Q'$ are also non-exceptional.

On the other hand, we can also consider the genus 0 curve 
\[C:108x^2+9xw+z^2+zw+w^2=0,\] 
obtained by taking a linear combination of the equations of $X_{ns}(6)$ to eliminate the variable $y$. The change of variables $X=x-w/24,Z=z-w/2,W=w$, shows that $C$ is equivalent to the quadratic form $108X^2+Z^2+\frac{9}{16}W^2$, which has no rational points. Hence $C(\QQ)=\emptyset$. On the other hand, we have a degree-2 morphism
\begin{align*}
    \phi:X_{ns}(6) &\to C\\
    (x:y:z:w) &\mapsto (x:z:w).
\end{align*}
We are ready to prove the following result.

\begin{proposition}
\label{proposition:Pic0 Xns6 trivial}
    The group $\Pic^0(X_{ns}(6)/\QQ)$ is trivial.
\end{proposition}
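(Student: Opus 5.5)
We set $X := X_{ns}(6)$ and $J := \Jac(X) = E$, and aim to show that every element of $\Pic^0(X/\QQ)$ is zero. Since $X$ has genus 1, we have the exact sequence
\[
0\to \Pic^0(X/\QQ)\to J(\QQ)\to \Br(\QQ),
\]
so $\Pic^0(X/\QQ)$ is a subgroup of $J(\QQ)$. The first step is to compute $J(\QQ)$. Using the model $E$ in Magma, we verify that $E$ has rank $0$ (for instance via a 2-descent or an analytic rank computation) and that its torsion is $\ZZ/2\ZZ$, following step (2) of the strategy in \S\ref{subsection:strategy}. It then suffices to show that the unique nontrivial element of $J(\QQ)$ is not represented by a $\QQ$-rational divisor class.

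The second step is to identify this nontrivial element explicitly using the $\QQ(\sqrt{-3})$-points. Set $K=\QQ(\sqrt{-3})$. Over $K$, the difference of two points gives a class in $J(K)$. We choose a pair, say $T_1-T_2$ with $T_1,T_2\in\{P,P',Q,Q',R,R'\}$, whose class is fixed by $\Gal(K/\QQ)$ and nonzero; a natural candidate is $P-Q'$ or $R-R'$. In practice, we compute in Magma using the Weierstrass model $E'$ with $P$ as origin, and find which difference maps to the $\QQ$-rational point of order 2. The class $[T_1-T_2]$ then lies in $J(\QQ)$ and is nontrivial, so it is the generator.

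The third and key step is to show that no divisor of degree $0$ defined over $\QQ$ lies in this class. Suppose $D$ is a $\QQ$-rational divisor with $D\sim T_1-T_2$. We push forward along $\phi:X\to C$. Since $C\simeq$ a conic with $C(\QQ)=\emptyset$, every $\QQ$-rational divisor on $C$ has even degree, and $\Pic(C/\QQ)=2\ZZ$ via the degree, while $\Pic(C_{\bar\QQ})=\ZZ$. Pushforward respects linear equivalence, so $\phi_*D\sim \phi_*T_1-\phi_*T_2$. This is automatic, since both sides have degree $0$ on a genus-0 curve, so pushforward to degree zero gives no information. We therefore instead use the invariant ``$D+T_2$ effective-ish'': we consider a degree-one representative. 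Choose a rational divisor of degree $2$ on $X$, e.g.\ the fiber $\phi^*(c)$ over a quadratic point $c$ of $C$, or more simply a hyperplane-type class. If the nontrivial class $\xi$ were rational, then $\xi+$(a degree-2 rational class) would be rational of degree $2$. More cleanly, note that the Galois-fixed class $[T_1]-[T_2]$ being rational would yield a rational divisor class of degree $1$, namely $[T_1]+[T_2']$-type combinations. We show this is impossible: a rational degree-1 class on a genus-1 curve is represented by a rational point, by Riemann–Roch, and pushing forward via $\phi$ gives a rational point on $C$, contradicting $C(\QQ)=\emptyset$. Concretely, we check that $T_1 + (\text{conjugate of }T_2)$ or similar combinations relate $[T_1-T_2]$ to half of a rational degree-2 class, using that $\phi$ identifies conjugate pairs such as $\phi(P)=\phi(Q')$ or $\phi(R)$.

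I expect this last bookkeeping to be the main obstacle. It requires finding the right relation among the six points, e.g.\ that $T_1+T_1^\sigma$ is a fiber of $\phi$ or of $\varrho$, so that rationality of $\xi$ forces a rational degree-odd class. I would first compute $\phi$ on the six points to see which pairs share fibers, and then conclude.
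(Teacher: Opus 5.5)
Your first two steps agree with the paper: $J(\QQ)\simeq\ZZ/2\ZZ$, and the nontrivial class can be taken to be $\xi=[R-R']$ (your $P-Q'$ works equally well). The gap is in the third step, and it is not a matter of bookkeeping: the mechanism you propose cannot work.

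\textbf{Why the parity argument fails.} There is no Galois-invariant divisor class of odd degree on $X$ at all. Since $X$ has genus $1$, $\Pic^1_X\simeq X$, so an invariant degree-$1$ class would be a rational point, and $X(\QQ)=\emptyset$. Because $R+R'$ is rational of degree $2$, any invariant odd-degree class could be shifted down to degree $1$. More fundamentally, every relation you plan to use is a relation among divisor classes over $\bar\QQ$, and $\xi$ is already Galois-fixed. Such relations include linear equivalences among the six $\QQ(\sqrt{-3})$-points, fibres of $\phi$, and push-forwards. Combining $\xi$ with rational classes of even degree only yields even degrees, and $\phi_*$, $\phi^*$ preserve or double degrees. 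At best you can re-express $\xi$, e.g.\ $\xi=[2R]-[R+R']$ with $[2R]=[P+Q]$ a fibre class of $\phi$. Such manipulations cannot separate ``Galois-fixed'' from ``represented by a $\QQ$-rational divisor'', which is exactly the content of the proposition: the image of $\xi$ under $J(\QQ)\to\Br(\QQ)$ in your own exact sequence is nonzero. (Also, $[T_1]+[T_2']$ has degree $2$, not $1$, and in fact $\phi(P)=\phi(Q)$, not $\phi(Q')$.)

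\textbf{The missing idea.} You need to use an actual rational divisor together with Riemann--Roch over $\QQ$, as the paper does.
\begin{enumerate}
\item If $D'$ is a $\QQ$-rational divisor with $D'\sim R-R'$, then $D'+R+R'$ is a $\QQ$-rational divisor of degree $2$ linearly equivalent to $2R$.
\item Its Riemann--Roch space over $\QQ$ is $2$-dimensional, so it is equivalent to an effective rational divisor $S+S'$. Since $X(\QQ)=\emptyset$, this must be a pair of conjugate quadratic points.
\item $R$ is a ramification point of $\phi$: the fibre over $\phi(R)=(0:\frac{-1+\sqrt{-3}}{2}:1)$ is $2R$. Hence $|2R|$ is exactly the pencil of fibres of $\phi$ over $\QQ(\sqrt{-3})$. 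The paper checks this concretely by showing that the two zeros of any $f\in\mathcal L(2R)$ have equal $x$- and $z$-coordinates.
\item Therefore $\phi(S)=\phi(S')$ is Galois-stable, i.e.\ a point of $C(\QQ)=\emptyset$, which is a contradiction.
\end{enumerate}
Equivalently, the obstruction maps $\Pic(Y_{\bar\QQ})^{\Gal(\bar\QQ/\QQ)}\to\Br(\QQ)$ for $Y\in\{X,C\}$ are compatible with $\phi^*$. So $[2R]=\phi^*[\mathrm{pt}]$ inherits the nonzero obstruction of the pointless conic $C$, while $[R+R']$ has none. Hence $\xi=[2R]-[R+R']$ does not lie in $\Pic^0(X/\QQ)$.
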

\begin{proof}
    We use the elliptic curve $E=\Jac(X_{ns}(6))$ as computed above.
    The curve $E$ has Mordell-Weil group $E(\QQ)\simeq\ZZ/2\ZZ$. We have an injective group homomorphism $\Pic^0(X_{ns}(6)/\QQ)\hookrightarrow \Jac(X_{ns}(6))(\QQ)$, and hence we need to discard the possibility that $\Pic^0(X_{ns}(6)/\QQ)$ is isomorphic to $\ZZ/2\ZZ$.  
    
    If we look at the affine patch $w=1$, the curve $X_{ns}(6)$ has equations
    \[
        \begin{cases}
            0=-18x+3y^2+z^2+z+1,\\
            0=36x^2+2x-y^2.
        \end{cases}
    \]
    Recall that $\Jac(X_{ns}(6))(\QQ)=\Pic^0(X_{ns}(6)_{\bar\QQ})^{\Gal(\bar\QQ/\QQ)}$. This implies that the nontrivial element $D$ in $\Jac(X_{ns}(6))(\QQ)$ must be represented by a two-torsion element fixed by the Galois action up to linear equivalence. 
    
    Consider the divisor
    \(
        D:=R-R',
    \)
    satisfying $2D=\on{div}(g)$ with
    \[
        g = \frac{72x-2\sqrt{-3}z-\sqrt{-3}-3}{z+\frac{\sqrt{-3}+1}{2}}.
    \]
    We check that $D$ is not principal (using e.g. Magma's \texttt{IsPrincipal}).
    If we let $\sigma\in \Gal(\bar\QQ/\QQ)$ be any element not fixing $\QQ(\sqrt{-3})$, we see that ${}^\sigma D=-D$. Therefore $D\sim {}^\sigma D$, and $D$ is indeed an element in $\Jac(X_{ns}(6))(\QQ)$ of order 2.
    
    We will now show that no divisor in $\Pic^0(X_{ns}(6)/\QQ)$ is linearly equivalent to $D$. Suppose for a contradiction that $D\sim D'$ with $D'$ a rational divisor of degree 0. If this were the case, by Riemann-Roch, we would have $D'+R+R'\sim S+S'$, where $S$ is a quadratic point and $S'$ is its Galois conjugate. Hence we have
    \[
        S+S'-2R\sim D'-(R-R')\sim D'-D \sim 0.
    \]
    Now we have $R\in X_{ns}(6)(\QQ(\sqrt{-3}))$, so there must exist a function $f\in \QQ(\sqrt{-3})(X_{ns}(6))$ with $\on{div}(f)=S+S'-2R$. In particular, $f\in\mathcal L(2R)$ and $f(S)=f(S')=0$. A basis for $\mathcal L(2R)$ is 
    \[
        \left\{1,
        \frac{-108x + (6\sqrt{-3} + 6)z + 21}{z + \frac{1-\sqrt{-3}}{2}}
        \right\}.
    \]
    Hence we may normalize $f$ by a constant so that there is some $\alpha\in\QQ(\sqrt{-3})$ such that
    \[
        f = \alpha + \frac{-108x + (6\sqrt{-3} + 6)z + 21}{z + \frac{1-\sqrt{-3}}{2}}.
    \]
    It is computed then that the vanishing locus of $f$ on $X_{ns}(6)$ satisfies
    \begin{align*}
        x &= \frac{-\sqrt{-3}\alpha - 6\sqrt{-3} + 27}
        {\alpha^2 + (12\sqrt{-3} + 12)\alpha + (72\sqrt{-3} + 36)},\\
        z &= \frac{\frac{(\sqrt{-3} - 1)\alpha^2}{2} - 33\alpha - 180\sqrt{-3} - 180}
    {\alpha^2 + (12\sqrt{-3} + 12)\alpha + (72\sqrt{-3} + 36)}.
    \end{align*}
    In particular we have $x(S)=x(S')$ and $z(S)=z(S')$. Since $S$ and $S'$ are Galois conjugates, we obtain $x(S),z(S)\in\QQ$. It follows that $\phi(S)=(x(S):z(S):1)$ defines a point on $C(\QQ)$, which is a contradicition, since $C(\QQ)=\emptyset$. 
    This shows that $\Pic^0(X_{ns}(6)/\QQ)=0$.
\end{proof}

Finally, we can prove the non-exceptionality of quadratic points on $X_{ns}(6)$. 

\begin{proposition}
\label{proposition:non-exceptional points Xns6}
    Let $S$ be a quadratic point on $X_{ns}(6)$. Then $S$ is a pullback of a rational point on $X_{ns}^+(6)$.
\end{proposition}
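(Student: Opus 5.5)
Let $S$ be a quadratic point on $X_{ns}(6)$, defined over a quadratic field $L$, and let $S'$ be its Galois conjugate. The plan is to use $\Pic^0(X_{ns}(6)/\QQ)=0$ (Proposition~\ref{proposition:Pic0 Xns6 trivial}) to show that the degree-2 divisor $S+S'$ is a fibre of $\varrho$. Concretely, I will take the rational degree-2 divisor $D_0:=R+R'=\varrho^*((0:0:1))$, which is the fibre of $\varrho$ over the rational point $(0:0:1)$ of the conic $36x^2+3xw-y^2=0$. Since $D_0$ is defined over $\QQ$, the divisor $S+S'-D_0$ is a rational divisor of degree $0$. By Proposition~\ref{proposition:Pic0 Xns6 trivial} it is principal, so $S+S'\sim D_0$.

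Next I use that $X_{ns}(6)$ has genus $1$. By Riemann--Roch, $\dim\mathcal L(D_0)=2$, so the complete linear system $|D_0|$ is a pencil $\cong\PP^1$ over $\QQ$. Each fibre $\varrho^*(t)$, for $t$ on the conic, is linearly equivalent to $D_0$ because the conic is $\PP^1$. Hence $t\mapsto\varrho^*(t)$ embeds the conic into $|D_0|$. The map is injective, since distinct fibres are disjoint, and a morphism $\PP^1\to\PP^1$ of degree $1$ onto its image is an isomorphism. Therefore every effective divisor linearly equivalent to $D_0$ is a fibre of $\varrho$.

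Alternatively, I can argue directly: $\mathcal L(D_0)$ is spanned by $1$ and the pullback of a degree-$1$ coordinate on the conic, for example $y/x$ or another uniformizer. It follows that $S+S'=\on{div}(f)+D_0$ with $f$ a pullback, so $S+S'=\varrho^*(t)$ for some $t$. This $t$ is then a rational point, since the divisor is Galois-stable and $\varrho$ is defined over $\QQ$.

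Finally, recall the identification made earlier: $\varrho$ agrees with $\varrho_w:X_{ns}(6)\to X_{ns}^+(6)$ up to an automorphism of $\PP^1$, via the $j$-invariant factorization and the uniqueness of the genus-$0$ subcover isomorphic to $\PP^1$. So $S$ lies in $\varrho_w^{-1}(X_{ns}^+(6)(\QQ))$. Cusps cause no trouble, because the statement is about all quadratic points and the argument never distinguishes them.

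The one subtle point is the step asserting that all members of $|D_0|$ are fibres of $\varrho$. This requires knowing that $\varrho$ has degree exactly $2$ and that $\varrho^*$ of the hyperplane class of the conic (degree $1$ in $\PP^1$ terms) equals the class of $D_0$. I would check this by counting: $R$ and $R'$ are the only points with $x=0$, and they map to $(0:0:1)$. This makes the pencil identification immediate.
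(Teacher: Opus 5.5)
Your proof is correct and follows essentially the same route as the paper. Both arguments use $\Pic^0(X_{ns}(6)/\QQ)=0$ to make $S+S'$ linearly equivalent to a rational fibre of $\varrho$, and then observe that the two-dimensional Riemann--Roch space of that fibre is spanned by $1$ and a function pulled back from the conic, so $S+S'$ is itself the fibre over a rational point. The differences are only in the choice of fibre and the level of explicitness. The paper takes the fibres $P+P'$ and $Q+Q'$ over $(1:\pm 6:0)$ and computes the bases $\langle 1,6x+y\rangle$ and $\langle 1,6x-y\rangle$ to read off $x(S),y(S)\in\QQ$. You use the single fibre $R+R'$ over $(0:0:1)$ with the pulled-back coordinate $y/x$, which is valid because the conic is tangent to $x=0$ at $(0:0:1)$, so $y/x$ has only a simple pole there; this shows one pencil already suffices.
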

\begin{proof}
    We will work on the affine patch $w=1$ (we already know that letting $w=0$ yields the points $P,P',Q,Q'$, which are non-exceptional).    
    Suppose $S$ is a quadratic point on $X_{ns}(6)$ and let $S'$ be its Galois conjugate. Since this curve has trivial $\Pic^0$, we have
    \[
        S+S'-(P+P')\sim S+S'-(Q+Q')\sim 0,
    \]
    and so there exist two functions $f,g$ satisfying $\on{div}(f)=S+S'-(P+P')$ and $\on{div}(g)=S+S'-(Q+Q')$. In particular, we have $f\in\mathcal L(P+P')$ and $g\in\mathcal L(Q+Q')$. These spaces have $\QQ$-bases
    \begin{align*}
        \mathcal L(P+P')&=\langle 1,6x+y\rangle\\
        \mathcal L(Q+Q')&=\langle 1,6x-y\rangle.
    \end{align*}
    It follows that, up to normalizing by a constant,
    \begin{align*}
        f = 6x+y-\alpha,\quad
        g = 6x-y-\beta,
    \end{align*}
    for some $\alpha,\beta\in\QQ$. Since $f(S)=g(S)=0$, we necessarily have $x(S)$ and $y(S)\in\QQ$. Hence $\varrho_w(S)=(x(S):y(S):1)$ is a point on $X_{ns}^+(6)$, and in particular $S$ is non-exceptional.
\end{proof}

\subsection{\texorpdfstring{$X_{ns}(10)$}{Xns(10)}}
\label{subsection:Xns10}
We now prove that all quadratic points on $X_{ns}(10)$ are non-exceptional. From \cite[\href{https://beta.lmfdb.org/ModularCurve/Q/10.40.1.a.1/}{Modular Curve $X_{ns}(10)$}]{lmfdb}, this genus 1 curve  has a model in $\PP^4$ cut out by the equations
\begin{equation}\label{eq:Xns10}
X_{ns}(10):\begin{cases}
    0=x^2 + y^2 - yw + z^2 - zw + w^2,\\
    0=2x^2 - y^2 + yz + yw - z^2.
\end{cases}
\end{equation}

Using the \texttt{Jacobian} command in Magma, we find that $\Jac(X_{ns}(10))$ is the elliptic curve
\[
E:v^2 = u^3 + 1/5u^2 - 1/25u.
\]
Its Mordell-Weil group is $E(\QQ)\simeq\ZZ/2\ZZ$.
By pulling back the rational points $(X:Z:W)=(1:-3:1)$ and $(-1:-3:1)$ on $C'$, we obtain the following four points on $X_{ns}(10)(\QQ(\sqrt{-3}))$ in coordinates $(X:Y:Z:W)$:
\begin{align*}
    P=(1:2+\sqrt{-3}:-3:1),\quad
    P'=(1:2-\sqrt{-3}:-3:1)\\
    R=(-1:2+\sqrt{-3}:-3:1),\quad
    R'=(-1:2-\sqrt{-3}:-3:1).
\end{align*}
This makes $X_{ns}(10)_{\QQ(\sqrt{-3})}$ an elliptic curve. Choosing $P$ as its origin, we find that it has a Weierstrass model
\[
    E':s^2 = r^3 + \frac{480\sqrt{-3}-120}{49}r^2 + \frac{7200000\sqrt{-3} - 5720000}{117649}.
\]
Using this model, we check that $E'(\QQ(\sqrt{-3}))\simeq\ZZ/2\ZZ\times \ZZ$, and that there is an isomorphism $E\simeq E'$ over $\QQ$.

As in the previous section, we have three quadratic subcovers, namely
\begin{itemize}
    \item The genus 0 curve $X_{ns}(5)$, which has no rational points.
    \item The genus 0 curve $X_{ns}^+(10)$, which is isomorphic to $\PP^1$.
    \item An elliptic curve $E_1$ of rank 0 over $\QQ$.
\end{itemize}
From the model \eqref{eq:Xns10} we find a map 
\(
\phi:(x:y:z:w)\mapsto (y:z:w)
\)
from our curve to the conic $C:-3y^2 + yz - 3z^2 + 3yw + 2zw - 2w^2=0$, which is positive definite as a real quadratic form, and therefore has no points in $\QQ$. However, it is not clear how to obtain a map from $X_{ns}(10)$ to $\PP^1$ by considering linear combinations of the defining quadrics. Instead, if we consider the change of variables
\[
x=X,\quad
y=Y,\quad
z=Z+Y,\quad
w=W,
\]
then we obtain the model
\begin{equation}\label{eq:Xns10 new model}
X_{ns}(10):
\begin{cases}
    0=X^2 + 2Y^2 + 2YZ + Z^2 - 2YW - ZW + W^2,\\
    0=2X^2 - Y^2 - YZ - Z^2 + YW.
\end{cases}
\end{equation}
Now the genus 0 curve $C':5X^2 - Z^2 - ZW + W^2=0$ does have a rational point, e.g. $(X:Z:W)=(1:-3:1)$, and the map $\varrho_w:(X:Y:Z:W)\mapsto (X:Z:W)$ is a double cover of $C'\simeq \PP^1$. With the coordinates $X,Y,Z,W$, the $j$-invariant map to $X(1)$ is given by the function $-5^4f/g$, where 
\begin{align*}
    f &=-256 Z^{10} + 1152 Z^{9} W - 2304 Z^{8} W^{2} + 2840 Z^{7} W^{3} - 2511 Z^{6} W^{4} \\ 
    &\phantom{=}\ + 1686 Z^{5} W^{5} - 869 Z^{4} W^{6} + 348 Z^{3} W^{7} - 105 Z^{2} W^{8} + 22 Z W^{9} - 3 W^{10},\\
    g & =-Z^{10} - 5 Z^{9} W - 5 Z^{8} W^{2} + 10 Z^{7} W^{3} + 15 Z^{6} W^{4} - 11 Z^{5} W^{5}\\
    &\phantom{=}\ - 15 Z^{4} W^{6} + 10 Z^{3} W^{7} + 5 Z^{2} W^{8} - 5 Z W^{9} + W^{10}.
\end{align*}
As $X_{ns}^+(10)$ is the only degree 2 subcover of $j:X_{ns}(10)\to X(1)$ isomorphic to $\PP^1$, and $j$ factors through $C'$, we necessarily have $C'\simeq X_{ns}^+(10)$ over $\QQ$.

\begin{proposition}
    The group $\Pic^0(X_{ns}(10)/\QQ)$ is trivial.
\end{proposition}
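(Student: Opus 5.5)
I will follow the argument of Proposition~\ref{proposition:Pic0 Xns6 trivial} for $X_{ns}(6)$. We have an injection $\Pic^0(X_{ns}(10)/\QQ)\hookrightarrow \Jac(X_{ns}(10))(\QQ)\simeq E(\QQ)\simeq \ZZ/2\ZZ$. It therefore suffices to exhibit the nontrivial element and show it is not represented by a rational divisor of degree $0$.

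The natural candidate is $D:=R-R'$ or $D:=P-P'$, built from the four points over $\QQ(\sqrt{-3})$ found above. For $\sigma$ the nontrivial automorphism of $\QQ(\sqrt{-3})$ we have ${}^\sigma D=-D$. So if $2D$ is principal (checked by exhibiting $g$ with $\on{div}(g)=2D$, or with Magma) and $D$ itself is not principal (checked with \texttt{IsPrincipal}), then $D$ is the nontrivial element of $\Jac(X_{ns}(10))(\QQ)$. If neither of these works, I would try $P-R$ or $P-R'$; some difference of these points must give the $2$-torsion point, since $E'(\QQ(\sqrt{-3}))$ has torsion $\ZZ/2\ZZ$.

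Suppose, for a contradiction, that $D\sim D'$ with $D'$ a rational divisor of degree $0$. The divisor $D'+R+R'$ is rational of degree $2$ on a genus $1$ curve, so by Riemann–Roch it is linearly equivalent to an effective rational divisor of degree $2$. That divisor has the form $S+S'$, where either $S$ is a quadratic point and $S'$ its conjugate, or $S,S'$ are rational points. The latter cannot occur, because $X_{ns}(10)$ has no real points. Then
\[
S+S'-2R\sim D'-(R-R')\sim 0,
\]
so there is $f\in\mathcal L(2R)$ over $\QQ(\sqrt{-3})$ with zero divisor $S+S'$. Writing a basis $\{1,h\}$ of $\mathcal L(2R)$, we get $f=\alpha+h$ with $\alpha\in\QQ(\sqrt{-3})$. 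Solving $f=0$ together with the equations \eqref{eq:Xns10}, I expect to find that the fibre of $h$ over $-\alpha$ is cut out so that the coordinates $y/w$ and $z/w$ are determined as rational functions of $\alpha$, taking equal values on $S$ and $S'$. This is because $h$, having a double pole at $R$, should factor (up to an automorphism of the target) through the degree-2 map $\phi:(x:y:z:w)\mapsto(y:z:w)$ to the conic $C$.

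Since $S$ and $S'$ are conjugate and share these coordinates, the values $y(S)/w(S)$ and $z(S)/w(S)$ are rational. The case $w(S)=0$ must be handled separately, for instance by switching to another affine patch. Hence $\phi(S)\in C(\QQ)$, which is empty because $C$ is positive definite. This contradiction gives $\Pic^0(X_{ns}(10)/\QQ)=0$.

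The main obstacle is the middle step: checking that the functions in $\mathcal L(2R)$ are pulled back via $\phi$ from $C$, rather than via the other double cover $\varrho_w$ to $C'$. If instead $h$ factors through $C'$, the point $\varrho_w(S)$ would be rational on $C'\simeq\PP^1$ and there would be no contradiction. So the choice of base point matters. I would first compute which of the two involutions fixes $R+R'$, or maps $R$ to itself doubly, and choose the point $T\in\{P,R\}$ so that $2T$ is a fibre of $\phi$ rather than of $\varrho_w$. Concretely, I would use $\mathcal L(2T)=\phi^*\mathcal L(\text{point of } C)$, where the relevant point of $C$ is $\phi(T)$, defined over $\QQ(\sqrt{-3})$, with $\phi$ ramified at $T$.

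If no such $T$ is available, I would fall back on the following: $\mathcal L(T+{}^\sigma T)$ contains $\phi^*$ of a linear form through the conjugate pair $\phi(T),\phi({}^\sigma T)$. I would then argue with $S+S'\sim T+{}^\sigma T$ directly, exactly as in Proposition~\ref{proposition:non-exceptional points Xns6}.
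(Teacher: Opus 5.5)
Your framework is the paper's: inject into $\Jac(X_{ns}(10))(\QQ)\simeq\ZZ/2\ZZ$, use Riemann--Roch to produce $S+S'$, and get a contradiction from $C(\QQ)=\emptyset$. However, the decisive step is missing, and neither of the linear systems you propose can supply it.

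\textbf{The nontrivial class.} Work in the coordinates $X,Y,Z,W$. The involution of $\phi$ is $X\mapsto -X$, so $P+R$ and $P'+R'$ are $\phi$-fibres. Meanwhile $P+P'$ and $R+R'$ are $\varrho_w$-fibres. Hence $R-R'\sim P'-P$ and $2(R'-P)\sim 0$. Your first candidates $R-R'$ and $P-P'$ are therefore not $2$-torsion: otherwise $P'-P$ would equal $0$ or $R'-P$ in the $2$-torsion $\ZZ/2\ZZ$ of $E'(\QQ(\sqrt{-3}))$, forcing $P'\sim P$ or $P'\sim R'$. The nontrivial class is $D=R'-P\sim R-P'$, i.e.\ a $\phi$-fibre minus a $\varrho_w$-fibre. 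Your backup $P-R'$ is the right one, but you never redo the Riemann--Roch step with it.

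\textbf{The linear system.} More seriously, neither $\mathcal L(2T)$ for $T\in\{P,R\}$ nor $\mathcal L(T+{}^\sigma T)$ is pulled back from $C$.
\begin{enumerate}
\item The ramification points of $\phi$ have $X=0$, while $P,P',R,R'$ have $X=\pm1$. So $2T$ is never a $\phi$-fibre, and the $T$ you plan to search for does not exist.
\item $T+{}^\sigma T$ is a $\varrho_w$-fibre, so $\mathcal L(T+{}^\sigma T)=\varrho_w^*\mathcal L(\varrho_w(T))$. This is exactly the case you flag as giving no contradiction; the paper uses $\mathcal L(R+R')$ in the next proposition precisely to prove non-exceptionality.
\item Any nonconstant function pulled back via $\phi$ has a $\phi$-stable polar divisor, and $R+R'$ is not $\phi$-stable ($\phi$'s involution sends it to $P+P'$). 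So $\mathcal L(R+R')$ contains no such pullback.
\item In any case, $S+S'\sim T+{}^\sigma T$ would mean $D'\sim 0$, contradicting your standing assumption.
\end{enumerate}

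\textbf{The missing step.} The needed divisor is $R+P$. Since $D\sim -D=P-R'$, you get $S+S'\sim D'+R+R'\sim (P-R')+R+R'=R+P=\phi^*(\phi(P))$. Thus $\mathcal L(R+P)=\phi^*\mathcal L(\phi(P))$; indeed the paper's basis $\{1,(Y-(2-\sqrt{-3}))/(Z+3)\}$ involves only $Y$ and $Z$. So $S+S'$ is a full fibre of $\phi$, and $\phi(S)=\phi(S')$ is Galois-stable. Hence $\phi(S)\in C(\QQ)=\emptyset$, which is the required contradiction.
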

\begin{proof}
    The proof is similar to that of Proposition~\ref{proposition:Pic0 Xns6 trivial}.
    We work with the model in Eq.~\eqref{eq:Xns10 new model} on the affine patch $W=1$.  We know that $\Jac(X_{ns}(10))(\QQ)\simeq\ZZ/2\ZZ$. The nontrivial divisor class is represented by $D=R'-P$. Indeed, the point $R'$ is not be linearly equivalent to $P$, and the function
    \[
    g=\frac{(6Y + (-2\sqrt{-3} + 3)Z - 3)  (2X - Z - 1)}{(Z+3)^2}
    \]
    satisfies $\on{div}(g)=2D$. Moreover,  Magma's \texttt{IsLinearlyEquivalent} reveals that we have $R'-P\sim R-P'$. 
    We have an injective homomorphism 
    \[
        \Pic^0(X_{ns}(10)/\QQ)\hookrightarrow\Jac(X_{ns}(10))(\QQ).
    \]
    We wish to show that $D$ is not in the image.
    Suppose for a contradiction that $D\sim D'$ for some rational divisor $D'$ of degree 0. Since $R+R'$ is also a rational divisor, by Riemann-Roch there exists some quadratic point $S$ (with quadratic conjugate $S'$) such that $D'+R+R'\sim S+S'$. Then we have
    \[
    S+S'-R-P\sim S+S'-R-R'-(P-R') \sim D'-D\sim 0.
    \]
    Therefore there exists some function $f\in\mathcal L(R+P)$ such that $f(S)=f(S')=0$. A basis for $\mathcal L(R+P)$ is 
    \[
    \langle 1,\frac{Y - (2-\sqrt{-3})}{Z + 3}\rangle.
    \]
    Hence, after rescaling there exists some $\alpha\in\QQ$ such that $f=\alpha + (Y - (2-\sqrt{-3})/(Z + 3)$. A computation shows that the $Y$ and $Z$-coordinates of the zeros of $f$ satisfy
    \begin{align*}
    Y &= \frac{(\sqrt{-3} + 2)\alpha^2 + 4\alpha + (-3\sqrt{-3} + 
        6)/5}{\alpha^2 + \alpha + 3/5},\\
    Z &= \frac{-3\alpha^2 + (2\sqrt{-3} - 3)\alpha + 
        (5\sqrt{-3} + 1)/5}{\alpha^2 + \alpha + 3/5}.
    \end{align*}
    Since $S$ and $S'$ are conjugates, we know that $Y(S)=Y(S')$ and $Z(S)=Z(S')$, and hence these coordinates lie in $\QQ$. But this produces a point $\phi(S)=(Y(S),Y(S)+Z(S),1)\in C(\QQ)$, which is a contradiction, since $C$ has no rational points. Hence $\Pic^0(X_{ns}(10)/\QQ)$ is trivial.
\end{proof}

We now show that $X_{ns}(10)$ only has non-exceptional quadratic points.

\begin{proposition}
    Let $S$ be a quadratic point on $X_{ns}(10)$. Then $S$ is a pullback of a rational point on $X_{ns}^+(10)$.
\end{proposition}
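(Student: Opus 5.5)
We follow the template of Proposition~\ref{proposition:non-exceptional points Xns6}, working with the model \eqref{eq:Xns10 new model}. The double cover $\varrho_w:(X:Y:Z:W)\mapsto(X:Z:W)$ onto $C'\simeq X_{ns}^+(10)$ exhibits the equation $2X^2-Y^2-YZ-Z^2+YW=0$ as a quadratic in $Y$ over the coordinates of $C'$. So it suffices to show that for every quadratic point $S$ the ratios $X(S)/W(S)$ and $Z(S)/W(S)$ are rational, or the analogous statement on another affine chart. The degenerate points with $W=0$ should be handled first and directly: intersect the model with $W=0$ and check that the resulting points, if any, lie over rational points of $C'$.

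Let $S$ be a quadratic point with conjugate $S'$, and choose a rational effective degree-2 divisor $D_0$ supported where $W=0$ or on a fibre of $\varrho_w$. A natural choice is $D_0=P+P'$, the fibre of $\varrho_w$ over the rational point $(1:-3:1)$ of $C'$; the divisor $R+R'$ over $(-1:-3:1)$ works equally well. By the preceding proposition $\Pic^0(X_{ns}(10)/\QQ)=0$, so $S+S'-(P+P')\sim 0$ and $S+S'-(R+R')\sim 0$. Hence there are functions $f\in\mathcal L(P+P')$ and $g\in\mathcal L(R+R')$, both defined over $\QQ$ because the divisors are rational, vanishing at $S$ and $S'$. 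Each of these Riemann--Roch spaces is $2$-dimensional with a $\QQ$-basis $\{1,h\}$. Since $P+P'$ is a fibre of $\varrho_w$, we expect $h$ to be pulled back from $C'$, for instance
\[
h=\frac{X-W}{Z+3W}
\]
or a similar linear fraction vanishing at the other point of $C'$ on the line through $(1:-3:1)$. Similarly, $\mathcal L(R+R')$ should be spanned by $1$ and $(X+W)/(Z+3W)$.

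Normalizing, we get $h_1(S)=\alpha$ and $h_2(S)=\beta$ with $\alpha,\beta\in\QQ$. These two linear equations in $X/W$ and $Z/W$, namely $X-W=\alpha(Z+3W)$ and $X+W=\beta(Z+3W)$, are independent whenever $\alpha\neq\beta$. Solving them shows $X(S)/W(S),Z(S)/W(S)\in\QQ$, so $\varrho_w(S)\in C'(\QQ)=X_{ns}^+(10)(\QQ)$ and $S$ is non-exceptional.

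The main obstacle is the explicit verification that these Riemann--Roch spaces really are spanned by $1$ and a function pulled back from $C'$; I would confirm the bases with Magma. In fact this is automatic: if $D_0$ is a fibre $\varrho_w^*(c)$, then $\varrho_w^*\mathcal L_{C'}(c)\subseteq\mathcal L(D_0)$, and both spaces have dimension $2$. The remaining cases are degenerate: $\alpha=\beta$, $S$ meeting the support of $D_0$, or $Z+3W=0$ at $S$. In each of these $S$ lies on a fibre over an explicit point of $C'$ determined by rational data, so I would check them directly, for instance by noting that $S+S'=P+P'$ forces $S=P$, which is already non-exceptional.
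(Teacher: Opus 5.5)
Your proposal is correct and is essentially the paper's argument. The paper uses only the single space $\mathcal L(R+R')=\langle 1,(X-W)/(Z+3W)\rangle$ and reads off the rationality of $X(S),Z(S)$ from the zero of one function. One space suffices by exactly your observation that $\mathcal L(R+R')=\varrho_w^*\mathcal L_{C'}(\varrho_w(R))$, which also makes the $W=0$ and other degenerate cases unnecessary. You have the two functions swapped: $(X-W)/(Z+3W)$ has its poles along $R+R'$ and $(X+W)/(Z+3W)$ along $P+P'$, but this does not affect your conclusion.
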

\begin{proof}
    We work again on the patch $W=1$. Let $S$ be a quadratic point on $X_{ns}(10)$. Since $\Pic^0(X_{ns}(10)/\QQ)=0$, we know that $S+S'-R-R'$ is a principal divisor, and so there exists some $h\in\mathcal L(R+R')$ such that $h(S)=h(S')=0$. Now a basis for this Riemann-Roch space is $\{ 1,(x - 1)/(z + 3)\}$, and so (up to a constant) we have $h=\alpha+(x-1)/(z+3)$ for some $\alpha\in\QQ$. A computation shows that an expression for a zero of $h$ is
    \[
        \left(\frac{-\alpha^2 - \alpha - 1/5}{\alpha^2 - 1/5} : \beta : \frac{-3\alpha^2 - 
        2\alpha - 2/5}{\alpha^2 - 1/5} : 1\right),
    \]
    for some quadratic $\beta\in\bar\QQ$. If follows that the $X$ and $Z$-coordinates of the points $S$ and $S'$ are rational. Therefore $\varrho_w(S)=(X(S):Z(S):1)\in X_{ns}^+(10)(\QQ)$, as claimed.
\end{proof}

\subsection{\texorpdfstring{$X_{ns}(15)$}{Xns(15)}}
\label{sec:Xns15}

In this section we show that the genus-7 curve $X_{ns}(15)$ has no exceptional quadratic points. We will do this using symmetric Chabauty \cite{siksek09}, we begin by giving the necessary ingredients.

Using the algorithm described in \cite[\S5]{zywina24} (as implemented in \cite{githubGitHubDavidzywinaOpenImage}), we find a model for the curve $X_{ns}(15)$ given by 10 equations in $\PP^6$ (see Appendix~\ref{appendix:Xns15}). These equations have been chosen so that the involution $w:X_{ns}(15)\to X_{ns}(15)$ inducing the double cover $\varrho:X_{ns}(15)\to X_{ns}^+(15)$ is given by
\[
    w(x_1:x_2:x_3:x_4:x_5:x_6:x_7) = 
        (x_1:x_2:-x_3:-x_4:-x_5:-x_6:-x_7).
\]
Using Magma's \texttt{CurveQuotient} we obtain the model $y^2+y = x^6+3x^5-5x^3+3x$ of $X_{ns}^+(15)$. The set of rational points on this curve was computed in \cite{frengley23}.

\begin{proposition}[Frengley]
    The curve $X_{ns}^+(15)$ has exactly 14 points over $\QQ$. In the model $y^2+y=x^6+3x^5-5x^3+3x$, they are given by the set
    \begin{align*}
    \{
    (1,-2),(-2,-2),(-1,-1),(0,-1),(1,1),(-2,1),(1,-1),
    (-1,0),\\(0,0),(3,36),(-4,36)(3,-37),(-4,-37),
    \infty_+,\infty_-
    \}
    \end{align*}
    where $\infty_+ = (1:1:0)$ and $\infty_- = (1:-1:0)$ in the weighted projectivization of the model.
\end{proposition}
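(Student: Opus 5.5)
The plan is to treat this as a rational-points problem on an explicit genus-2 curve. The model $y^2+y=x^6+3x^5-5x^3+3x$ has a sextic right-hand side, so $X_{ns}^+(15)$ is a hyperelliptic curve of genus $2$. Completing the square gives $Y^2=4x^6+12x^5-20x^3+12x+1$ with $Y=2y+1$, and I would work with this model.

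First, a naive search on this model (and at the two points at infinity, which are rational because the leading coefficient is a square) produces the listed points. The hyperelliptic involution $y\mapsto -1-y$ and the visible symmetry $x\mapsto -1-x$ pair them up. This gives the lower bound, so the content of the statement is the upper bound.

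Next I would compute $J=\Jac(X_{ns}^+(15))$ over $\QQ$ in Magma. I would compute its torsion by reducing modulo a few primes of good reduction. I would bound its rank with a $2$-descent (\texttt{RankBound}) and find independent generators among differences of the known rational points, then saturate. The expected outcome is the crux. For non-split Cartan plus curves the Mordell--Weil rank typically equals the genus, so I expect $\rank J(\QQ)=2$. In that case ordinary Chabauty--Coleman does not apply.

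If the rank turned out to be $1$, I would run Chabauty--Coleman at a small good prime, say $p=7$ or $11$. I would combine it with the Mordell--Weil sieve to show that each residue disc contains exactly one rational point, which finishes the proof.

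In the expected rank-$2$ case I would try two approaches.

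The first is to look for a splitting of $J$. I would check whether the extra involution $x\mapsto-1-x$ makes $J$ isogenous to a product of two elliptic curves $E_1\times E_2$ over $\QQ$. If one factor has rank $0$, the rational points lie in the finite preimage of $E_i(\QQ)$ under $X_{ns}^+(15)\to E_i$ and can be read off directly. If both factors have rank $1$, I would use elliptic-curve Chabauty on an étale double cover, parametrized by a factorization of the sextic over a small number field. Alternatively I would use the rank-one-quotient version of Chabauty on each $E_i$.

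If that fails, I would apply quadratic Chabauty for rank equals genus (Balakrishnan--Dogra, with the implementation of Balakrishnan--Dogra--Müller--Tuitman--Vonk already used for $X_{ns}^+(17)$). The steps are: choose a good ordinary prime $p$, confirm that the Néron--Severi rank of $J$ exceeds $1$ (true here because of the real multiplication or the splitting found above), compute the finite set of $p$-adic points cut out by the quadratic Chabauty function, and eliminate the spurious $p$-adic solutions with a Mordell--Weil sieve.

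The main obstacle is this rank-$2$ step: determining generators of $J(\QQ)$ provably, and carrying out quadratic Chabauty or elliptic Chabauty with enough precision to rule out extra $p$-adic points. This is exactly the computation attributed to Frengley, so in the paper it can be cited rather than redone.
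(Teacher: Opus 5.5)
The paper gives no argument of its own for this proposition: it is quoted from Frengley's computation. Your proposal, which ultimately defers the rank-$2$ step to that same result, therefore agrees with the paper. One correction to your lower-bound step: an actual search would show that the listed point $(1,-1)$ is not on the curve, since at $x=1$ the right-hand side is $2$ while $y^2+y=0$ for $y=-1$. It is a garbled copy of $\infty_-=(1:-1:0)$. The $14$ points are the $12$ affine points with $x\in\{1,-2,-1,0,3,-4\}$ (two for each $x$), together with $\infty_\pm$.

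Your roadmap can also be sharpened. The substitution $T=2x+1$, $W=4(2y+1)$ turns the model into $W^2=(T^2-5)^3+80$. Both elliptic quotients have rank exactly $1$, because each has a point of infinite order and $\rank J^+(\QQ)=2$:
\begin{enumerate}
\item $W^2=s^3+80$, via $s=T^2-5$. With $G=(4,12)$, the affine points lie over $\pm G,\pm 2G,\pm 4G$.
\item The quotient via $u=1/T^2$.
\end{enumerate}
Consequently your rank-$0$ branch and ``Chabauty on a rank-one quotient'' are vacuous. What remains are the two options you already list: elliptic Chabauty, for instance over $\QQ(\sqrt[3]{10})$ where the sextic acquires the factor $T^2-5+2\sqrt[3]{10}$, and quadratic Chabauty.
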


We can easily take pullbacks of the points in $X_{ns}^+(15)(\QQ)$ to obtain a set of known non-exceptional points on $X_{ns}(15)^{(2)}(\QQ)$. They are listed in Table~\ref{table:Xns15-known}.
We next describe the Mordell-Weil groups of the jacobians of our curves.

\begin{lemma}\label{lemma:ranks-15}
    Let $J:=\Jac(X_{ns}(15))$ and $J^+:=\Jac(X_{ns}^+(15))$. 
    \begin{enumerate}
        \item $J^+(\QQ)\simeq \ZZ^2$, with generators $\Delta_1 := P_4+P_9 - P_7 - P_8$ and $\Delta_2 := P_5 - P_7$.
        \item $J(\QQ)\simeq \ZZ^2 \oplus T$, where the order of $T$ is only divisible by 2 and 3. 
        \item The divisors $D_1 := \varrho^*(\Delta_1)$ and $D_2 := \varrho^*(\Delta_2)$ generate a subgroup of $J(\QQ)$ index $I\mid 2^5\cdot 3^4$.
    \end{enumerate}
\end{lemma}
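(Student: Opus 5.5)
The plan is to establish the three parts in order, combining rank computations with reduction-mod-$p$ information, which is the standard route for symmetric Chabauty inputs (as in \cite{michaudjacobs22}).

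\textbf{Part (1).} $X_{ns}^+(15)$ is the genus-2 curve $y^2+y=x^6+3x^5-5x^3+3x$. I will complete the square to get $Y^2=4x^6+12x^5-20x^3+12x+1$ and run Magma's two-descent (\texttt{RankBounds}) on the Jacobian $J^+$. The upper bound should come out as $2$. For the torsion, I will compute $\#J^+(\FF_p)$ for several primes $p$ of good reduction; their gcd should be $1$, so the torsion is trivial. Next I will check that $\Delta_1,\Delta_2$ are independent, using the height pairing matrix and confirming its determinant is nonzero. To get that they generate all of $J^+(\QQ)$ rather than a finite-index subgroup, I will use canonical height bounds (Stoll's saturation, \texttt{ReducedBasis}/\texttt{Saturation}). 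Alternatively, I can sieve: show that the images in $\prod_p J^+(\FF_p)$ rule out $\ell$-divisibility for each prime $\ell$ up to the index bound.

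\textbf{Part (2).} I plan to use the isogeny decomposition $J\sim J^+\times J^-$, where $J^-$ is the Prym (the kernel of $1+w$), of dimension $5$. I will bound the rank of $J(\QQ)$ by showing $\operatorname{rank} J^-(\QQ)=0$. The first thing to try is to decompose $J^-$ into modular abelian varieties using the newform decomposition of the relevant space, since $J_{ns}(15)$ is isogenous to a quotient of $J_0(225)$ via Chen's isogeny. I will then check that the $L$-functions of the factors of $J^-$ do not vanish at $s=1$, and apply Kolyvagin–Logachev / Kato to conclude analytic rank $0$ implies rank $0$. Together with part (1), this gives $\operatorname{rank} J(\QQ)=2$.

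For the torsion, $T$ injects into $J(\FF_p)$ for odd good primes $p\nmid 15$. I will compute $\#J(\FF_p)$ from the zeta function of the reduction, by point counts over $\FF_{p^k}$ for $k\le 7$, at $p=7,11,13,17,\dots$. The gcd of these orders should only involve $2$ and $3$. For $p=2$, I will use the prime-to-$2$ part only.

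\textbf{Part (3).} This is the step I expect to be the main obstacle. Since $\varrho_*\varrho^*=2$ on $J^+$, the subgroup $\langle D_1,D_2\rangle$ has rank $2$, so it has finite index $I$ in $J(\QQ)$. I will bound $I$ by combining two facts. First, $\varrho^*J^+(\QQ)+J^-(\QQ)$ has index dividing a power of $2$, coming from the $(1+w,1-w)$ decomposition. Second, $J^-(\QQ)$ is torsion with order dividing a power of $2$ and $3$. A cleaner route is to apply $\varrho_*$: for $P\in J(\QQ)$ with $nP\in\langle D_1,D_2\rangle$, pushing forward lands in $J^+(\QQ)=\langle\Delta_1,\Delta_2\rangle$. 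Therefore $2P-\varrho^*\varrho_*P$ lies in $\ker(\varrho_*)$, and its rational points are torsion, contained in $T$. So
\[
I \;\mid\; 2^{2}\cdot\#T\cdot\#\bigl(\text{cokernel contributions}\bigr).
\]
I will then bound $\#T$ more precisely using the gcd computation from part (2) (aiming for $\#T\mid 2^3\cdot 3^4$). If necessary, I will also use the kernel of $\varrho^*$, which lies in $J^+[2]$ and is trivial on rational points since $J^+(\QQ)$ is torsion-free. Assembling these pieces should give $I\mid 2^5\cdot 3^4$. The delicate part is keeping careful track of the powers of $2$, which arise both from $\varrho_*\varrho^*=2$ and from the $2$-part of $T$.
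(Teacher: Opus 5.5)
Parts (1) and (2) follow the paper's route. The paper quotes Magma's \texttt{MordellWeilGroup} for $J^+(\QQ)$. It gets $\operatorname{rank} J(\QQ)=2$ from the isogeny $J_{ns}(15)\sim J_0^{\mathrm{new}}(225)$: every newform factor has analytic rank $0$ except two rank-one elliptic curves, and Kolyvagin--Logachev finishes. It bounds torsion by $\#J(\FF_{11})=2^{13}\cdot 3^4\cdot 7^2$ and $\#J(\FF_{13})=2^4\cdot 3^4\cdot 5\cdot 19$, so $\#T\mid 2^4\cdot 3^4$. Your detour through the Prym $J^-$ is harmless. It does oblige you to decide which newforms make up $J^-$, which computing analytic ranks over all seven dimensions avoids.

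The gap is at the end of part (3). Your identity $2P-\varrho^*\varrho_*P\in\ker(\varrho_*)\cap J(\QQ)\subseteq T$ is exactly the paper's key step. But you then try to bound the literal index, and make the conclusion depend on sharpening the torsion bound to $\#T\mid 2^3\cdot 3^4$. That is a hope, not an argument: the point counts only give $2^4\cdot 3^4$, and if the $2$-part of $T$ really is $2^4$, no further primes will lower it.

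Your bookkeeping can in fact be made exact. Since $J^+(\QQ)$ is torsion-free, $\varrho_*$ kills $T$, so $\ker(\varrho_*)\cap J(\QQ)=T$. Also $\varrho^*$ is injective on $J^+(\QQ)$, so $\langle D_1,D_2\rangle\cap T=0$. Hence
\[
[J(\QQ):\langle D_1,D_2\rangle]=\#T\cdot[\varrho_*J(\QQ):2J^+(\QQ)] \;\mid\; 4\,\#T \;\mid\; 2^6\cdot 3^4 ,
\]
with no further ``cokernel contributions''. So the literal bound $2^5\cdot 3^4$ on the index is not established by the available data.

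What your identity gives at once is $2\,\#T\cdot P\in\langle D_1,D_2\rangle$ for every $P\in J(\QQ)$, that is, $2^5\cdot 3^4\cdot J(\QQ)\subseteq\langle D_1,D_2\rangle$. So the quotient has exponent dividing $2^5\cdot 3^4$. That inclusion is precisely what the paper's argument proves; its last sentence passes from it to the index, which does not follow in general. It is also all that the Mordell--Weil sieve for $X_{ns}(15)$ uses: an integer $I$ supported on $\{2,3\}$ with $I\cdot J(\QQ)\subseteq\langle D_1,D_2\rangle$.

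Finish part (3) with that exponent statement, or with the index bound $4\,\#T$, which has the same prime support. Drop the unsupported sharpening of $\#T$.
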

\begin{proof}
    For the jacobian of the curve $X_{ns}^+(15)(\QQ)$, we use Magma's \texttt{MordellWeilGroup} find that $J^+(\QQ)\simeq \ZZ\oplus \ZZ$ with generators $\Delta_1$ and $\Delta_2$. For the rank of $J$, we use the isogeny
    \[
        J_{ns}(15) \sim J_0^{new}(15^2)
    \]
    from \cite[Theorem~3.8]{dlm22}. The space $S_2^{new}(\Gamma_0(15^2))$ contains five normalized eigenforms with rational coefficients, plus a pair of conjugate eigenforms with coefficients in $\QQ(\sqrt{5})$\footnote{The relevant data can be checked at \url{https://www.lmfdb.org/ModularForm/GL2/Q/holomorphic/?level=15\%5E2&weight=2&char\_label=225.a}}. All of the forms have analytic rank zero, except for two forms corresponding to elliptic curves that have rank one. By \cite[Theorem~0.3]{kl89} we have that the free part of $J(\QQ)$ has rank 2. To bound the torsion subgroup $T$, we compute that
    \begin{align*}
        \#J(\FF_{11}) = 2^{13} \cdot 3^4 \cdot 7^2,\quad
        \#J(\FF_{13}) = 2^4 \cdot 3^4 \cdot 5^1 \cdot 19^1.
    \end{align*}
    Using the injectivity of torsion points, we find that $\#T$ divides $2^4\cdot 3^4$.

    To prove (3), we observe that since $\deg\varrho = 2$, we have $\varrho_*D_i = \varrho_*\varrho^*\Delta_i = 2\Delta_i$. From (1) we know that $J^+(\QQ)=\langle D_1,D_2\rangle$, and therefore $\varrho_*\langle \Delta_1, \Delta_2 \rangle = 2J^+(\QQ)$. For every $X\in J(\QQ)$, we have
    \[
        \varrho_*(2X) = 2\varrho_*X \in 2J^+(\QQ) \subseteq \varrho_*\langle \Delta_1, \Delta_2\rangle.
    \]
    Hence there is some $Y\in \langle \Delta_1,\Delta_2\rangle$ such that $\varrho_*(2X) = \varrho_*(Y)$, and we obtain $\varrho(2X-Y)=0$. Since $J(\QQ)$ and $J^+(\QQ)$ both have rank 2, necessarily $2X-Y\in T$. Therefore we have that $2^5\cdot 3^4\cdot J(\QQ) \subseteq \langle \Delta_1,\Delta_2\rangle$, which implies that the index $I=[J(\QQ):\langle \Delta_1,\Delta_2\rangle]$ is divides $2^5\cdot 3^4$.
\end{proof}

We now have all the necessary information on $X_{ns}^+(15)$ and $X_{ns}(15)$ to find the quadratic points.

\begin{theorem}\label{theorem:Xns15}
    All quadratic points on $X_{ns}(15)$ are non-exceptional.
\end{theorem}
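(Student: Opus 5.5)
The plan is to run Siksek's symmetric Chabauty \cite{siksek09} combined with a Mordell--Weil sieve on $X_{ns}(15)^{(2)}$, following the approach of \cite{michaudjacobs22} for $X_{ns}(13)$. The key structural point is Lemma~\ref{lemma:ranks-15}: $J(\QQ)$ and $J^+(\QQ)$ both have rank $2$. Hence $\varrho^*J^+(\QQ)$ has finite index in $J(\QQ)$, and the whole Mordell--Weil group, up to torsion, comes from the quotient. This is the situation in which the pullbacks $\varrho^*(X_{ns}^+(15)(\QQ))$ can form infinite ``families'' in $X_{ns}(15)^{(2)}(\QQ)$ only via the quotient map. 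The argument should therefore show that every rational point of $X_{ns}(15)^{(2)}$ either lies in $\varrho^*(X^+_{ns}(15)(\QQ))$, which by Frengley's proposition is the explicit finite list of Table~\ref{table:Xns15-known}, or leads to a contradiction.

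\textbf{Step 1 (Abel--Jacobi reduction).} Fix a base rational degree-2 divisor $D_0$, for instance $\varrho^*\infty_+$. Every $Q\in X_{ns}(15)^{(2)}(\QQ)$ gives $[Q-D_0]\in J(\QQ)$. Since the index $I$ of $\langle D_1,D_2\rangle$ divides $2^5\cdot 3^4$ and the torsion $T$ has order dividing $2^4\cdot3^4$, we have $I[Q-D_0]\in\langle D_1,D_2\rangle$. We will work with the map $X^{(2)}(\QQ)\to J(\QQ)/\langle D_1,D_2\rangle$-cosets, computed via reductions, rather than needing exact generators.

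\textbf{Step 2 (Chabauty at small primes).} For primes $p$ of good reduction ($p\nmid 15$, e.g.\ $p=7,11,13,17$), compute a basis of the annihilating differentials $\omega\in H^0(X_{ns}(15)_{\QQ_p},\Omega^1)$ killing $J(\QQ)$. The space of differentials has dimension $7$ and the rank is $2$, so this space has dimension $5$. Concretely, these are the differentials anti-invariant under $w$ (a $5$-dimensional space, since $g(X^+_{ns}(15))=2$). This works because $J(\QQ)\otimes\QQ=\varrho^*J^+(\QQ)\otimes\QQ$, which pairs trivially with the $w$-anti-invariant forms. In the chosen model these forms are read off from $x_3,\dots,x_7$, which makes the computation explicit. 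For each known point $Q=\varrho^*(P)$, apply Siksek's criterion in the form adapted to pullback points (Box's / Michaud-Jacobs' version). The usual criterion fails for pullbacks, since they lie in a positive-dimensional family $\varrho^*\PP$ in the residue disc. The adapted version is: if the expansions of the annihilating anti-invariant differentials at $\tilde Q$ give a matrix of the appropriate rank $\geq 1$ after quotienting, then the only rational points of $X^{(2)}$ in the residue class of $Q$ are pullbacks $\varrho^*(X^+(\QQ))$. For the non-pullback known points (if any, e.g.\ sums of two $\QQ(\sqrt{-15})$-type points), apply the standard criterion, which says $Q$ is alone in its residue class.

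\textbf{Step 3 (Mordell--Weil sieve).} It remains to show that every unknown rational point of $X^{(2)}$ reduces, modulo a suitable set of primes, into a residue class of a known point already certified in Step 2. For this, take the images of $X^{(2)}(\FF_p)$ in $J(\FF_p)$ and intersect with the images of $D_0+\langle D_1,D_2\rangle$ (twisted by the finite index, i.e.\ sieving over the cosets $I^{-1}$) for $p\in\{7,11,13,17,19,\dots\}$. Then check that the surviving classes all correspond to known points passing the Step 2 criterion. Finally, since any surviving rational point is then a pullback of a point of $X^+_{ns}(15)(\QQ)$, it is non-exceptional by Definition~\ref{definition:exceptional-point}.

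\textbf{Main obstacle.} I expect the main difficulty to be the sieve in the presence of the unknown index $I\mid 2^5\cdot3^4$ and the unknown torsion. The sieve must be run over all $2^5\cdot3^4$-divisions (or the generators must first be saturated at $2$ and $3$). To handle this, I would first try to saturate $\langle D_1,D_2\rangle$ at $2$ and $3$ by checking divisibility in $J(\FF_p)$ for several $p$. I would also determine $T$ exactly from rational cuspidal and CM divisor classes on $X_{ns}(15)$ over $\QQ(\sqrt{-15})$, $\QQ(\sqrt{-3})$, and $\QQ(\sqrt{5})$, so as to shrink the search before sieving.
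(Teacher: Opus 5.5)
Your proposal is correct and follows essentially the same route as the paper: symmetric Chabauty at the pullback points using the $w$-anti-invariant differentials (the criterion of Theorem~5.1 and Proposition~5.2 in \cite{michaudjacobs22}), combined with Frengley's determination of $X_{ns}^+(15)(\QQ)$ and a Mordell--Weil sieve on the subgroup $\langle D_1,D_2\rangle$. The one simplification in the paper is that it never saturates $\langle D_1,D_2\rangle$ or determines $T$. Instead it takes the sieve modulus $M=(5\cdot 7\cdots 31)^{10}$ coprime to $6$, so that $I\mid 2^5\cdot 3^4$ acts invertibly on $J(\FF_p)/MJ(\FF_p)$ and one can sieve directly with $\langle D_1,D_2\rangle$; this removes what you identified as the main obstacle.
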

\begin{proof}
    We perform symmetric Chabauty using the double cover $\varrho:X_{ns}(15)\to X_{ns}^+(15)$ induced by the involution $w:X_{ns}(15)\to X_{ns}(15)$. As in Lemma~\ref{lemma:ranks-15}, we let $X=X_{ns}(15)$ and $J=\Jac(X_{ns}(15))$, and we denote the symmetric square of $X$ by $X^{(2)}$. We wish to find the set of rational points $X^{(2)}(\QQ)$. We let $\iota:X^{(2)}\to J$ be the Abel-Jacobi map defined by $\iota(\{P,Q\}) = [P+Q-\infty_+-\infty_-]$, and we let $\iota_p$ be the corresponding Abel-Jacobi map over $\FF_p$. We let $X^{(2)}_\text{known}$ be the set of points $\{Q_i,{}^\sigma Q_i\}$ on $X^{(2)}$ given by Table~\ref{table:Xns15-known}. We denote by $\tilde\cdot$ the reduction modulo a prime $p$ of a point in $X^{(2)}(\QQ)$ or $J(\QQ)$.

    For every prime $p\in\{7,\dots,59\}$, we begin by computing a basis of differentials for $X(\FF_p)$. Then we consider a point $\{Q_i,{}^\sigma Q_i\}\in X^{(2)}_\text{known}$. We let $t_i$ be a uniformizer for $\tilde Q_i\in X(\FF_p)$, and we find that there exists a differential $\omega\in \Omega_{X(\FF_p)}$ such that
    \[
        \frac{v}{dt_i}\big|_{t_i=0}\neq 0
    \quad\text{ and}\quad (1+w)^*v = 0.
    \]
    By Theorem~5.1 and Proposition~5.2 in \cite{michaudjacobs22}, this shows that every point $\{Q_i,{}^\sigma Q_i\}$ is alone in its mod-$p$ residue disk in $X^{(2)}(\FF_p)$.

    It remains to show that $X^{(2)}_\text{known} = X^{(2)}(\QQ)$. We do that by using the Mordell-Weil sieve as explained in \cite{michaudjacobs22}. We consider the following data.
    \begin{itemize}
        \item An integer $I\mid 2^5\cdot 3^4$ such that $I\cdot J(\QQ) \subseteq \langle \Delta_1,\Delta_2\rangle$ (cf. Lemma~\ref{lemma:ranks-15}).
        \item The parameter $M = (5\cdot 7\cdot 11\cdot 13\cdot 17\cdot 19\cdot 23\cdot 29\cdot 31)^{10}$.
        \item The set of primes $S=\{7, 13, 19, 29, 31, 37, 41, 43, 47, 53, 59\}$.
    \end{itemize}
    We check that, for every prime $p\in S$, our model of $X$ has good reduction modulo $p$. In addition, our knowledge of the prime divisors of $I$ implies that $I$ is always coprime to $\#J(\FF_p)/MJ(\FF_p)$. We let $\mu_{p,M}:J(\FF_p)\to J(\FF_p)/MJ(\FF_p)$ be the mod-$M$ projection. In addition, we let $\iota_{p,M} = \mu_{p,M}\circ \iota_p$, and for $(a,b)\in\ZZ^2$, we let $\varphi_{p,M}:=\mu_{p,M}(a\tilde \Delta_1 + b\tilde \Delta_2)$. The maps are summarized in the following diagram:

    \[\xymatrix{
    X^{(2)}_\text{known} \ar[r] & X^{(2)}(\QQ) \ar[r]^\iota \ar[d]_{\sim} & J(\QQ)\ar[d]^{\sim} & G \ar[l] & \ZZ^2 \ar[l] \ar[ddll]^{\varphi_{p,M}}\\
    & X^{(2)}(\FF_p) \ar[r]^{\iota_p} \ar[dr]_{\iota_{p,M}} & J(\FF_p) \ar[d]^{\mu_{p,M}}\\
    & & J(\FF_p) / MJ(\FF_p).
    }\]

    For each $p\in S$, we let $\mathcal T_{p,M}$ be the set of points $P\in X^{(2)}(\FF_p)$ satisfying:
    \begin{enumerate}
        \item $\iota_{p,M}(P)\in \phi_{p,M}(\ZZ^2)$, and
        \item $P \neq \tilde Q$, for all $Q\in X^{(2)}_{\text{known}}$.
    \end{enumerate}
    A standard argument using the coprimality of $I$ with $\#J(\FF_p)/MJ(\FF_p)$ shows that, if $P$ is a point in $X^{(2)}(\QQ)\setminus X^{(2)}_\text{known}$, then $\tilde P\in \mathcal T_{p,M}$ (cf. \cite[pg.~12]{michaudjacobs22}). Now we let
    \[
        \mathcal W_{p,M} := \varphi^{-1}_{p,M}(\iota_{p,M}(\mathcal T_{p,M})),
    \]
    which is a set of $\mathcal B_{p,M} := \ker(\varphi_{p,M})$-cosets. Given another prime $p'\in S$, we let $\mathcal B_{p',M} := \ker(\varphi_{p',M})$, and we let $\mathcal B_{p,p',M}:=\mathcal B_{p,M}\cap \mathcal B_{p',M}$. By seeing each coset in $\mathcal W_{p,M}$ and $\mathcal W_{p',M}$ as a union of $\mathcal B_{p,p',M}$-cosets, we may define
    \[\mathcal W_{p,p',M} := \mathcal W_{p,M} \cap \mathcal W_{p',M},\]
    which is now a set of $\mathcal B_{p,p',M}$-cosets. Performing these intersections for the remaining primes of $S$, we obtain the Mordell-Weil sieve: if the set
    \(
    \bigcap_{p\in S} \mathcal W_{p,M}
    \) is empty, then $X^{(2)}_\text{known} = X^{(2)}(\QQ)$.  

    We let $S_{\leq q} := \{p\in S\mid p\leq q\}$. The successive rounds of the Mordell-Weil sieve are detailed in Table~\ref{table:mordell-weil sieve}. For each $q\in S$, we detail the index of $\mathcal B_{S_{\leq q},M}$ and the number of elements in $\mathcal W_{S_{\leq q},M}$. The sieve ends when the latter is empty.

    \begin{table}[h]\label{table:mordell-weil sieve}
    \centering
    \small
    \begin{tabular}{|c|*{11}{c|}}
    \hline
    $p_1,\ldots,p_k$ & 7 & 7, 13 & 7, 13, 19 & $S_{\leq 29}$ & $S_{\leq 31}$ & $S_{\leq 37}$  \\ \hline
    $[\mathbb{Z}^2 : \mathcal{B}_{p_1,\ldots,p_k,M}]$ & 143 & 35321 & 6181175 & 30905875 & 216341125 & 216341125  \\ \hline
    $\#\mathcal{W}_{p_1,\ldots,p_k,M}$ & 15 & 75 & 1350 & 6450 & 3590 & 2145 \\ \hline
    \end{tabular}

    \vspace{1em}
    \begin{tabular}{|c|*{11}{c|}}
    \hline
    $p_1,\dots, p_k$ & $S_{\leq 41}$ & $S_{\leq 43}$ & $S_{\leq 47}$ & $S_{\leq 53}$ & $S_{\leq 59}$ \\ \hline
    $[\mathbb{Z}^2 : \mathcal{B}_{p_1,\ldots,p_k,M}]$  & 3677799125 & 3677799125 & 3677799125 & 3677799125 & 3677799125  \\ \hline
    $\#\mathcal{W}_{p_1,\ldots,p_k,M}$ & 7615 & 3565 & 315 & 315 & 0\\ \hline
    \end{tabular}
    \end{table}
\end{proof}

\section{Non-split Cartan primes and some uniformity results}\label{section:nsC primes}

We now use the results on low-genus non-split Cartan curves to prove the following theorem.

\begin{theorem}
\label{theorem:exceptional set of j-invariants}
    Let $p_1,p_2$ be different primes. There exists a finite set $S_{p_1,p_2}\subset \bar\QQ \setminus \QQ$ with the following property.
    If $K$ is an imaginary quadratic field and $E/K$ is an elliptic curve such that the image of $\bar\rho_{E,p_i}$ is contained in $C_{ns}(p_i)$ for $i=1,2$, then either 
        $j(E)\in\QQ$ or
        $j(E)\in S_{p_1,p_2}$.
        
     Moreover, the set $S_{p_1,p_2}$ is empty if either $\{p_1,p_2\}\subset \{2,3,5\}$, or at least one of $p_1,p_2$ is in the set $\{7,11,13\}$.
\end{theorem}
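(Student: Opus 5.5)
Set $N=p_1p_2$. An elliptic curve $E/K$ whose images of $\bar\rho_{E,p_i}$ lie in $C_{ns}(p_i)$ for $i=1,2$ has mod-$N$ image in $C_{ns}(N)$, by the isomorphism $\prod_{p\mid N}\GL_2(\FF_p)\simeq\GL_2(\ZZ/N\ZZ)$. It therefore gives a non-cuspidal point $P_E\in X_{ns}(N)(K)$ with $j(P_E)=j(E)$. Since $K$ is quadratic, either $P_E$ is rational or it is a quadratic point of $X_{ns}(N)$. By the remark in Section~\ref{section:non-split Cartan curves}, $X_{ns}(N)$ has no non-cuspidal real points, so $P_E$ is genuinely quadratic.

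The key observation handles non-exceptional points. Suppose $P_E\in\varrho_H^{-1}(X_H(\QQ))$ for some $H$ with $[H:C_{ns}(N)]=2$. The $j$-map $X_{ns}(N)\to X(1)$ factors through $X_H\to X(1)$, because $C_{ns}(N)\subseteq H\subseteq\GL_2(\ZZ/N\ZZ)$. Hence $j(E)=j(\varrho_H(P_E))\in\QQ$. So $j(E)\notin\QQ$ forces $P_E$ to be exceptional, and I would define
\[
S_{p_1,p_2}:=\{j(P)\,:\,P\in X_{ns}(N)^{(2)}(\QQ)\text{ exceptional}\}\setminus\QQ .
\]
This set consists of quadratic algebraic numbers, so it lies in $\bar\QQ\setminus\QQ$. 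It has the required property, and it depends only on $p_1,p_2$, not on $K$ or $E$.

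Finiteness comes from the genus. By Lemma~\ref{lemma:genera of Xns}, for $N\ge 11$ the curve $X_{ns}(N)$ has genus at least $4$. For $N=6,10$ the set is empty by Theorem~\ref{theorem:non-exceptional XnsN}, and the only other case with $N<11$ is $N=2\cdot 3$ or $2\cdot5$, both already covered. For genus $\ge 4$ I would appeal to the Abramovich--Harris/Harris--Silverman type theorem: if $X^{(2)}(\QQ)$ is infinite, then either $X$ admits a degree-$2$ map to $\PP^1$ or to an elliptic curve of positive rank, or $W_2$ contains a translate of a positive-rank abelian subvariety, which for genus $\ge4$ happens only via such degree-$2$ maps. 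In every case all but finitely many quadratic points are pullbacks of rational points under a degree-$2$ map $X\to Y$.

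The main obstacle is to ensure that those degree-$2$ maps are the $\varrho_H$, so that the infinitely many points are non-exceptional. I would try to show that any involution of $X_{ns}(N)$ whose quotient has infinitely many rational points comes from $C_{ns}^*(N)$. If this identification fails for some pair, an alternative still gives finiteness of $S_{p_1,p_2}$: pullbacks of $\QQ$-points under any degree-$2$ map $X\to Y$ have $j$-invariant equal to $j$ of a rational point on a curve covering $X(1)$ only when $j$ factors through $Y$. Otherwise the quadratic points still have $j\notin\QQ$, so I would need the automorphism-group results of \cite{dlm22} to rule this out. For $p$ prime, Lemma~\ref{lemma:genera of Xns}(d) does exactly this, which already settles every $N$ having a prime factor $p\ge13$ by pulling back along $X_{ns}(N)\to X_{ns}(p)$.

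For the emptiness statement there are two cases. If $\{p_1,p_2\}\subset\{2,3,5\}$, apply Theorem~\ref{theorem:non-exceptional XnsN} with $N\in\{6,10,15\}$. If some $p_i\in\{7,11,13\}$, use instead the forgetful map $X_{ns}(N)\to X_{ns}(p_i)$. The point $P_E$ maps to a point of $X_{ns}(p_i)(K)$ with the same $j$-invariant, which is non-exceptional by \cite{michaudjacobs22}, so $j(E)\in\QQ$ by the factorization argument above.
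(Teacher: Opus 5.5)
Your proposal is correct and, with the detour removed, it is the paper's proof. Theorem~\ref{theorem:non-exceptional XnsN} handles $\{p_1,p_2\}\subset\{2,3,5\}$, the forgetful map to $X_{ns}(p_i)$ together with \cite{michaudjacobs22} handles a prime in $\{7,11,13\}$, and for a prime $p\ge 17$ the forgetful map to $X_{ns}(p)$ gives finiteness, since that curve has finitely many quadratic points by Lemma~\ref{lemma:genera of Xns}(d) and \cite{hs91}. These three cases exhaust all pairs, so the Abramovich--Harris analysis of $X_{ns}(p_1p_2)$ and the unproven claim about its involutions are never needed; your intrinsic $S_{p_1,p_2}$ (which you should restrict to non-cuspidal points) is simply contained in the set the paper uses.
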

\begin{proof}
    If $\{p_1,p_2\}\subset\{2,3,5\}$, then by Theorem~\ref{theorem:non-exceptional XnsN} all quadratic points on $X_{ns}(p_1p_2)$ are non-exceptional. Hence we may set $S_{p_1,p_2}=\emptyset$. Indeed, suppose $K$ is a quadratic field and $E/K$ is an elliptic curve with non-split Cartan image modulo $p_1$ and $p_2$. By considering an appropriate $(p_1p_2)$-level structure on $E$, we obtain a point $P$ on $X_{ns}(p_1p_2)(K)$, which is the pullback of some point on $X_{ns}^+(p_1p_2)(\QQ)$. Hence $j(E)=j(P)\in\QQ$.

    Suppose without loss of generality that $p_1\not\in \{2,3,5\}$. If $p_1\in\{7,11,13\}$ then a similar reasoning yields a point on $X_{ns}(p_1)(K)$ which is non-exceptional. Hence we have $j(E)\in \QQ$, and we let $S_{p_1,p_2} := \emptyset$. 
    
    Suppose otherwise that $p_1\geq 17$. By Lemma~\ref{lemma:genera of Xns}, we know that $X_{ns}(p_1)$ is neither bielliptic nor hyperelliptic. By \cite[Proposition~1]{hs91}, $X_{ns}(p_1)$ has finitely many quadratic points. Let
    \[
        S_{p_1} := \{j(P) \mid P\in X_{ns}(p_1)(\bar\QQ) \text{ quadratic point}\} \cap (\bar\QQ\setminus \QQ).
    \]
    Then $S_{p_1}$ is a finite set, and we may similarly define the set $S_{p_2}$. Finally, we let $S_{p_1,p_2} := S_{p_1}\cap S_{p_2}$.  We again have that an elliptic curve $E/K$ yields (after considering the appropriate level structures) points $P_1\in X_{ns}(p_1)(K)$ and $P_2\in X_{ns}(p_2)(K)$, and so $j(E)=j(P_1)=j(P_2)\in S_{p_1,p_2}$.
\end{proof}

\begin{remark}
    As observed by Sam Frengley, the fact that $S_{3,5}=\emptyset$ can also be shown by using that an elliptic curve over $K$ with mod-3 image contained in $C_{ns}(3)$ and mod-5 image contained in $C_{ns}^+(5)$ has rational $j$-invariant (cf. \cite[Proposition~7.4.5]{cn23}).
\end{remark}


Computing the sets $S_{p_1,p_2}$ for larger primes is bound to require intense computational resources in order to carry out the techniques of symmetric Chabauty and the Mordell-Weil sieve. Nonetheless, it is reasonable to expect that these sets will be empty. In this spirit, we make the following conjecture.

\begin{conjecture}
\label{strong conj}
    For every prime $p\geq 7$, the curve $X_{ns}(p)$ has no exceptional quadratic points.
\end{conjecture}

At this moment we cannot provide a general argument as to why Conjecture~\ref{strong conj} should be true. However, we can give the following evidence: for a fixed imaginary quadratic field $K$ of class number $>1$ and a large prime $p$, the curve $X_{ns}(p)$ cannot have non-cuspidal $K$-points. This is a particular case of the following result.

\begin{proposition}\label{proposition:finiteness of nsC p}
    Let $K$ be a number field not containing the Hilbert class field of an imaginary quadratic number field. Then there are finitely many primes $p$ such that $X_{ns}(p)(K)$ has non-cuspidal points. Equivalently, there exists a constant $C_K>0$ such that, for $p>C_K$, no elliptic curve $E/K$ has mod-$p$ image contained in $C_{ns}(p)$. 
\end{proposition}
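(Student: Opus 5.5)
The plan is to use the standard ``abelian character'' argument for Cartan images, as in the work of Momose, Larson--Vaintrob and Lemos. The key point is that non-split Cartan image forces the mod-$p$ representation to be abelian and to look locally like a Lubin--Tate character of level $2$.

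Suppose $E/K$ has $\bar\rho_{E,p}(G_K)\subseteq C_{ns}(p)\simeq \FF_{p^2}^\times$, with $p$ large and unramified in $K$. Then $\bar\rho_{E,p}$ becomes diagonal over $\FF_{p^2}$, equal to $\psi\oplus\psi^p$ for a character $\psi:G_K\to\FF_{p^2}^\times$. By class field theory, $\psi$ factors through a ray class group of $K$.

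The first step is to compute $\psi$ on inertia. We may assume $E$ has potentially good reduction at primes above $p$. Indeed, a Tate curve has $\bar\rho$ with a fixed line on inertia, which cannot lie in $C_{ns}(p)$ once $p$ is large relative to the ramification index. Having done this, Raynaud's theorem on finite flat group schemes with $e<p-1$ gives, at each $\mathfrak p\mid p$ and after a twist-free reduction to semistability over a bounded extension, that $\psi|_{I_\mathfrak p}$ is a product of fundamental characters of level $2$ with exponents in $\{0,1\}$. Non-split Cartan image forces level $2$, so $\psi|_{I_\mathfrak p}=\theta_{p^2-1}$ or $\theta_{p^2-1}^p$, up to a bounded power of $12$ coming from the semistable-reduction extension. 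Away from $p$, $\psi$ has bounded-order ramification, say killed by the exponent $12$.

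The second step is the global argument. Let $\mathfrak q$ be a fixed small prime of $K$ coprime to $p$, with class of order $h$ in the class group, so $\mathfrak q^h=(\alpha)$. Evaluate $\psi^{12}$ on the idele attached to $\alpha$. On one side this gives $\psi^{12}(\Frob_\mathfrak q)^h$. On the other, the local inertial description expresses it as a product over $\mathfrak p\mid p$ of $\alpha$ or its conjugates, reduced mod $p$, raised to a bounded power and twisted by the exponents $e_\mathfrak p$. This yields a congruence
\[
\beta_{\mathfrak q}^{12h}\equiv \prod_{\sigma}\sigma(\alpha)^{12 a_\sigma}\pmod{\mathfrak P}
\]
for a prime $\mathfrak P\mid p$ of a Galois closure. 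Here $\beta_{\mathfrak q}$ is a Frobenius eigenvalue, a root of $x^2-a_{\mathfrak q}x+N\mathfrak q$ with $|a_{\mathfrak q}|\leq 2\sqrt{N\mathfrak q}$, and $a_\sigma\in\{0,1\}$. Only finitely many pairs $(\beta,(a_\sigma))$ occur, so $p$ divides one of finitely many norms. This bounds $p$ unless some such norm is zero, that is, unless $\beta_{\mathfrak q}^{12h}=\prod\sigma(\alpha)^{12a_\sigma}$ as algebraic numbers.

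The third step, which I expect to be the main obstacle, is to rule out this exact equality for a well-chosen $\mathfrak q$. Comparing absolute values, an exact equality forces $\beta_{\mathfrak q}$ to have the same $\mathfrak q$-adic valuations as a product of conjugates of a generator of $\mathfrak q^h$. That in turn means that $\mathfrak q$ (or $N\mathfrak q$) factors in the CM field $\QQ(\beta_{\mathfrak q})$ in a way that forces this field to be an imaginary quadratic $M$ with $\mathfrak q$ having principal-type behaviour. The precise claim to prove is a Momose/Larson--Vaintrob type dichotomy: if the equality holds for all $\mathfrak q$ in a set of positive density, then $K$ contains the Hilbert class field of $M$.

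I would execute this by choosing $\mathfrak q$ of degree one, split and non-principal-type, using Chebotarev in the compositum of the finitely many candidate Hilbert class fields. The hypothesis that $K$ contains no Hilbert class field of an imaginary quadratic field then guarantees that such a $\mathfrak q$ exists, with each exact equality failing. Taking $C_K$ to be the maximum prime dividing the resulting finitely many nonzero norms, together with primes ramified in $K$ and small primes, proves the first assertion. The ``equivalently'' clause is then immediate, since non-cuspidal $K$-points of $X_{ns}(p)$ correspond to elliptic curves over $K$ with mod-$p$ image contained in a conjugate of $C_{ns}(p)$.
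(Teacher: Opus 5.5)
Your outline is sound, but it takes a genuinely different route from the paper. The paper does not analyse the global congruence at all. It cites Theorem~6.4 of Larson--Vaintrob \cite{lv14}, and uses the Hilbert class field hypothesis only to rule out the CM alternative: no elliptic curve over $K$ has CM defined over $K$. This gives $\psi^{12}=\chi_p^6$ for $p>LV_K$. Restricting to inertia above $p$, the paper then compares the orders of $\theta_2^{12d}$ and $\chi_p^6$, which differ once $p\ge 73$.

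You instead reprove the needed case of Larson--Vaintrob by Momose's method. You feed the level-$2$ inertial type into the principal idele of $\alpha$, which produces $\beta_{\mathfrak q}^{12h}\equiv\prod_{\sigma\in\Phi}\sigma(\alpha)^{12}$, with $\Phi$ consisting of half of the embeddings over each place above $p$. The hypothesis is then spent on excluding exact equality. In your version the alternative $\psi^{12}=\chi_p^6$ never appears, because the local type at $p$ is already built into the congruence.

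What each route buys: the paper's argument is short and covers all $K$ at once, but rests on a black box. Yours is self-contained. In the case used later (Corollary~\ref{corollary:finiteness of nsC p}, $K$ imaginary quadratic of class number $>1$) it is completely elementary:
\begin{enumerate}
\item $p$ cannot split, since at a degree-one place $\psi|_{I_w}$ would factor through $\FF_p^\times$;
\item hence $\Phi$ is a single embedding $\sigma$;
\item an equality $\beta^{12h}=\sigma(\alpha)^{12}$ at a split non-principal $\mathfrak q$ would force $\QQ(\beta)=\sigma(K)$ and $\sigma(\mathfrak q)=(\beta)$, a contradiction.
\end{enumerate}

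Two points need care when you write it out.

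\textbf{Local exponents.} Over the extension $L$ where $E$ acquires good reduction, the inertial exponents range up to $e=e(L/\QQ_p)$, not over $\{0,1\}$. Besides the level-$2$ case there is a level-$1$ (canonical subgroup) case, with characters of $I_L$ of exponents $c$ and $e-c$, $0<c<e$. The shape of the characters alone does not exclude it. It is excluded because the canonical subgroup is a $G_{K_v}$-stable line. That forces $\bar\rho_{E,p}(G_{K_v})$ into the scalars, which would make $\chi_p|_{I_v}$ the square of an $\FF_p^\times$-valued tame character, and that is impossible.

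\textbf{Auxiliary primes.} For general $K$, one auxiliary prime need not suffice. The natural way to force $\Phi$ to be the set of embeddings over an imaginary quadratic $M\subseteq K$ is to take $\mathfrak q_0$ split completely in the Galois closure $\tilde K$. But if $H_M\subseteq\tilde K$, which can happen with $H_M\not\subseteq K$, then $N_{K/M}\mathfrak q_0$ is automatically principal. Since $\Phi$ depends only on $p$, the fix is easy:
\begin{enumerate}
\item use $\mathfrak q_0$ to pin down $\Phi$;
\item use a second degree-one prime $\mathfrak q_M$ with $N_{K/M}\mathfrak q_M$ non-principal to kill the equality. Such a prime exists by Chebotarev, because $H_M\not\subseteq K$.
\end{enumerate}
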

\begin{proof}    
    Let $p$ be a prime and suppose that $P\in X_{ns}(p)(K)$ is a noncuspidal point. Let $E/K$ be the corresponding elliptic curve, whose mod-$p$ representation $\bar\rho_{E,p}$ has image contained in $C_{ns}(p)$. We will prove that $p$ is bounded by a constant $C_K$ depending only on $K$. Without loss of generality, we may assume that $p$ is unramified in $K$ (in particular, $p>\Delta_K$).

    Let $v\mid p$ be a prime of $K$. We begin by observing that either $E$ has potentially good reduction at $v$, or $p\leq 3$. Indeed, suppose that $E$ has potentially multiplicative reduction. Let $K'$ be a finite extension of $K_v$ such that $E/K'$ has multiplicative reduction, we may take this field so that $[K':K_v]\leq 2$. Then by \cite{serre72}, the semisimplification of the restriction to the inertia group $I_{K'}$ satisfies
    \[
        \bar\rho_{E,p}|_{I_{K'}}^{ss} \simeq 1\oplus \chi_p^2,
    \]
    where $\chi_p$ is the mod-$p$ cyclotomic character (we are using that $K_v/\QQ_p$ is unramified). If $p>3$, then $\chi_p^2$ is not the trivial character. But then the image of $\bar\rho_{E,p}$ contains a matrix which cannot be in $C_{ns}(p)$, which is a contradiction.

    Suppose now $E$ has potentially good reduction at $v$. There exists a finite extension $L/K_v$ of degree 12, such that $E_L$ has good reduction. The non-split Cartan image for $\bar\rho_{E,p}$ implies there exists a character $\psi:\Gal(\bar K/K)\to \FF_{p^2}^\times$ such that $\bar\rho_{E,p}\otimes \FF_{p^2}\simeq \psi \oplus \psi^p$. If we let $c$ be the ramification degree of $L/K_v$, then by the discussion in \cite[\S~11]{serre72} and the restriction that $\bar\rho_{E,p}$ has image contained in $C_{ns}(p)$, we see that the inertia group $I_L$ of $L$ satisfies 
    \[
        \psi|_{I_L}\simeq \theta_2^d,
    \]
    where $\theta_2:I_t\to \FF_{p^2}^\times$ is the fundamental character of level 2 of the tame inertia $I_t$, and $0\leq d\leq c\leq 12$. 

    Now by \cite[Theorem~6.4]{lv14}, there exists a constant $LV_K$ such that either $p\leq LV_K$, or the equality 
    \(
        \psi^{12} = \chi_p^6
    \)
    holds (there cannot be a congruence of $\psi$ with the character coming from an elliptic curve $E'/K$ with $\End(E'_{/K})$ a CM order, as no such elliptic curve exists over $K$). Suppose that the latter is true. By restricting to $I_L$, we get an equality of inertial characters 
    \begin{equation}\label{eq:equality of characters}
        \theta_2^{12d} = \chi_p^6.
    \end{equation}
    But now $\on{ord}(\theta_2^{12d}) = \frac{p^2-1}{\gcd(p^2-1,12d)} \geq \frac{p^2-1}{\gcd(p^2-1,144)}$, while $\on{ord}(\chi_p^6) = \frac{p-1}{\gcd(p-1,6)}$. A computation shows that for all $p\geq 73$, these two orders cannot be equal.

    Putting everything together, we set $C_K:=\max\{73,{\Delta_K}, LV_K\}$. Then for all $p>C_K$, there is no elliptic curve $E/K$ with non-split Cartan image modulo $p$.
\end{proof}

\begin{corollary}
    \label{corollary:finiteness of nsC p}
    Let $K$ be an imaginary quadratic field of class number $>1$. Then there exists a constant $C_K>0$ depending only on $K$ such that, for every prime $p>C_K$, the set $X_{ns}(p)(K)$ is empty. 
\end{corollary}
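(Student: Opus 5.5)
The plan is to deduce the corollary directly from Proposition~\ref{proposition:finiteness of nsC p}, after checking its hypothesis for an imaginary quadratic field $K$ of class number $h_K>1$. The hypothesis asks that $K$ contain no Hilbert class field $H_{K'}$ of any imaginary quadratic field $K'$.

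\emph{Step 1: verify the hypothesis.} Suppose $H_{K'}\subseteq K$ for some imaginary quadratic $K'$. Since $K'\subseteq H_{K'}\subseteq K$ and $[K:\QQ]=2=[K':\QQ]$, we get $K'=K$. Then $H_K\subseteq K$, so $[H_K:K]=h_K=1$, which contradicts $h_K>1$. Hence Proposition~\ref{proposition:finiteness of nsC p} applies. It gives a constant $C_K$ such that, for every prime $p>C_K$, the curve $X_{ns}(p)$ has no non-cuspidal $K$-points.

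\emph{Step 2: the cusps.} To conclude that $X_{ns}(p)(K)$ is empty, I must also show that no cusp is $K$-rational. The cusps of $X_{ns}(p)$ are defined over the field obtained by adjoining $p$-th roots of unity; the relevant field is essentially $\QQ(\zeta_p)^+$ or $\QQ(\zeta_p)$. The Galois action on the cusps factors through $\det$ and the action of $C_{ns}(p)$ on $\pm$ pairs of vectors in $(\ZZ/p\ZZ)^2$. Because $C_{ns}(p)$ acts transitively on the nonzero vectors, the cusps correspond to the cosets of $C_{ns}(p)\cap\SL_2$ stabilizers. Each cusp is then rational over a subfield of $\QQ(\zeta_p)$ of degree roughly $(p-1)/2$.

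More concretely, I expect the field of definition of each cusp to have degree at least $(p-1)/2$ over $\QQ$. For $p>5$ this degree exceeds $2$, so no cusp is defined over a quadratic field. If needed, I would enlarge $C_K$ to $\max(C_K,7)$ to cover the small primes.

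\emph{Main obstacle.} The main obstacle is Step 2: pinning down precisely the field of definition of the cusps of $X_{ns}(p)$. The cleanest route is to argue by degree. The $p-1$ cusps of $X_{ns}(p)$ are permuted by $\Gal(\QQ(\zeta_p)/\QQ)$ through $\det$, since $\det C_{ns}(p)=\FF_p^\times$ acts simply transitively after accounting for $\pm1$. Therefore they form orbits of size at least $(p-1)/2>2$ once $p\geq 7$. Step 1 is immediate.
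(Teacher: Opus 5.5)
Your proposal is correct and follows the same route as the paper: apply Proposition~\ref{proposition:finiteness of nsC p}, then rule out $K$-rational cusps because the $p-1$ cusps of $X_{ns}(p)$ are permuted by $\Gal(\QQ(\zeta_p)/\QQ)$ (your Step 1 just spells out why class number $>1$ excludes containing a Hilbert class field). For Step 2 the paper cites Serre \cite{serre89} for the sharper fact that the cusps form a single Galois orbit, so each has residue field exactly $\QQ(\zeta_p)$; your weaker bound of $(p-1)/2$ on the orbit size already suffices, and in any case the paper's constant satisfies $C_K\geq 73$.
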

\begin{proof}
    If $K$ is imaginary quadratic and has class number $>1$, then in particular it does not contain any Hilbert class fields of imaginary quadratic fields, and Proposition~\ref{proposition:finiteness of nsC p} applies. Hence there is a constant $C_K\geq 73$ such that $X_{ns}(p)(K)$ only has cuspidal points for $p>C_K$. As explained in \cite[pgs. 194-195]{serre89}, the curve $X_{ns}(p)$ has $p-1$ cusps, and they are all conjugate under the action of $\Gal(\QQ(\zeta_p)/\QQ)$. Since $p>73$, there cannot be any $K$-rational cusps, because $K$ is quadratic. Hence $X_{ns}(p)(K)$ is the empty set.
\end{proof}

\begin{remark}
    On first glance, Corollary~\ref{corollary:finiteness of nsC p} is a consequence of \cite[Theorem~12]{lv14b}, since there are no elliptic curves with CM over such a field $K$. However, the proof provided by the authors assumes GRH, while ours is unconditional, since the equality in \ref{eq:equality of characters} can only happen for finitely many primes.
\end{remark}

Finally, we make one more conjecture towards the statement Theorem~\ref{theorem:fake elliptic curves}, which is implied by Conjecture~\ref{strong conj}.
\begin{conjecture}
\label{weak conj}
    Let $K$ be an imaginary quadratic field. For every pair of primes $p\neq q$, every point $P\in X_{ns}(pq)(K)$ is non-exceptional.
\end{conjecture}

\begin{remark}
    Conjecture~\ref{weak conj} is equivalent to the set $S_{p,q}$ from Theorem~\ref{theorem:exceptional set of j-invariants} being disjoint from the field $K$.
\end{remark}

\section{Characterization of fake elliptic curves}
\label{section:fake elliptic curves}

In this section we apply our results on non-split Cartan curves to the problem of deciding whether an abelian surface has quaternionic multiplication. We begin by recalling the setting given in the introduction. 

\begin{proposition}\label{properties of QM surfaces}
    Let $K$ be an imaginary quadratic field, and let $A/K$ be an abelian surface such that $\End(A)$ is an order in an indefinite division quaternion algebra $D$ with center $\QQ$. 
    \begin{enumerate}
        \item For every rational prime $p$, there exists a representation 
        \[
            \bar\rho_{A,p}:\Gal(\bar K/K)\to\GL_2(\FF_p)
        \]
        such that $A[p]^{ss}\simeq \bar\rho_{A,p}\oplus \bar\rho_{A,p}$. The representations $\{\bar\rho_{A,p}\}_p$ form a compatible system.
        \item If the algebra $D$ ramifies at $p$, then $\bar\rho_{A,p}(\Gal(\bar K/K))\subseteq C_{ns}(p)$.
        \item In particular, we have $\#\{p\mid \bar\rho_{A,p}(\Gal(\bar K/K))\subseteq C_{ns}(p)\}\geq 2$.
    \end{enumerate}
\end{proposition}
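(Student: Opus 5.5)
The plan is to work with the $p$-adic Tate module as a module over $O:=\End(A)$. By a standard argument I can replace $A$ by an isogenous surface whose endomorphism ring is a maximal order. This changes neither the semisimplified $p$-torsion nor the endomorphism algebra $D$, and $\bar\rho_{A,p}$ is only asserted up to semisimplification.

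For (1), the starting point is that $V_p(A)$ is a free $D_p:=D\otimes\QQ_p$-module of rank one. The reason is that $V_p(A)$ has $\QQ_p$-dimension $4=\dim D_p$ and is a faithful $D_p$-module. The Galois action commutes with $D_p$, so $\rho_{A,p}$ lands in $\End_{D_p}(V_p)^\times\simeq (D_p^{op})^\times$. On the integral level (Ohta) it lands in $(O\otimes\ZZ_p)^\times$.

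I then split according to whether $D$ is split at $p$. If $p$ is split, $O\otimes\ZZ_p\simeq \Mat_2(\ZZ_p)$. Using the idempotent $e=\left(\begin{smallmatrix}1&0\\0&0\end{smallmatrix}\right)$, we get $T_p(A)=eT_p\oplus (1-e)T_p$, and the two summands are Galois-isomorphic rank-2 lattices. Reducing mod $p$ gives $A[p]\simeq \bar\rho_{A,p}^{\oplus2}$ even before semisimplification. If $p$ ramifies, $O\otimes\ZZ_p$ is the maximal order $\mathcal O_{D_p}$ of the quaternion division algebra over $\QQ_p$, and its reduction modulo its maximal ideal $\varpi$ is $\FF_{p^2}$. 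Galois acts on $A[p]=T_p/pT_p$ through $(\mathcal O_{D_p}/p)^\times$, preserving the filtration $A[p]\supset \varpi A[p]\supset 0$. Both graded pieces are one-dimensional $\FF_{p^2}$-vector spaces, on which Galois acts by an $\FF_{p^2}^\times$-valued character, namely the reduction of $\rho_{A,p}$ to $\mathcal O_{D_p}^\times\to\FF_{p^2}^\times$. Conjugation by $\varpi$ induces Frobenius on $\FF_{p^2}$, so the two pieces are isomorphic as 2-dimensional $\FF_p$-representations. I then define $\bar\rho_{A,p}$ to be either piece.

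Compatibility of the system comes from Frobenius characteristic polynomials. The reduced characteristic polynomial of $\rho_{A,p}(\Frob_\qq)$ in $D_p$ is independent of $p$ and has $\ZZ$ coefficients. This follows from the Weil conjectures applied to the $\ell$-adic characteristic polynomial, which is the square of the reduced one. Hence $\bar\rho_{A,p}$ has trace $a_\qq \bmod p$ and determinant $\mathbf N\qq \bmod p$ for $\qq\nmid p\mathfrak N$.

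Part (2) is essentially the ramified-case computation above. The image of Galois on a graded piece lies in $\FF_{p^2}^\times$ acting on $\FF_{p^2}\simeq\FF_p^2$ by multiplication, which is by definition (up to conjugacy) $C_{ns}(p)$. Care is needed in the bookkeeping that the semisimplification $\bar\rho_{A,p}$ is this graded piece, so that the image statement is about $\bar\rho_{A,p}$ and not about $A[p]$ itself.

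Part (3) follows because an indefinite division quaternion algebra over $\QQ$ ramifies at an even, nonzero number of places, none of them infinite, so at least two finite primes; each of these is a non-split Cartan prime by (2). The main obstacle is the ramified case of (1)–(2): making the filtration argument and the identification of $\mathcal O_{D_p}/\varpi\simeq\FF_{p^2}$ with the Cartan embedding precise, and handling the reduction to a maximal order cleanly.
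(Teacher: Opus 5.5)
Your proof is correct. Its skeleton is the paper's: pass to a $K$-isogenous surface with maximal endomorphism order (the paper cites \cite{dp13} for this), use Ohta's freeness of $T_p(A)$ over $O\otimes\ZZ_p$, split off two isomorphic rank-2 lattices where $D$ splits, and get (3) from the parity of the ramification set.

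The genuine difference is at the ramified primes. The paper claims the reduction of $\rho_{A,p}$ lands in $\GL_2(\FF_p)$ for every $p$ by Wedderburn's little theorem, and gets (2) by citing Jordan and Schembri. But for ramified $p$ the algebra $O\otimes\FF_p$ is not central simple: it is a local ring with square-zero radical and residue field $\FF_{p^2}$. So that step really needs an argument like yours. Your $\varpi$-adic filtration of $A[p]$ has graded pieces that are one-dimensional over $\FF_{p^2}$, with characters differing by Frobenius. This gives existence of $\bar\rho_{A,p}$ at ramified primes and part (2) in one stroke. Your reduced-characteristic-polynomial argument also makes explicit the compatibility that the paper only asserts. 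The paper's version is shorter by outsourcing; yours is self-contained.

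Two loose ends remain.
\begin{enumerate}
\item The bookkeeping you flag is settled by Maschke. Each graded piece is a representation through a subgroup of $\FF_{p^2}^\times$, whose order is prime to $p$, so it is semisimple. Hence $A[p]^{ss}$ is the direct sum of the two pieces, and $\bar\rho_{A,p}$ is literally one of them, with image in $C_{ns}(p)$.
\item The relation $f_\qq^2=P_\qq$ fixes the constant term of the reduced characteristic polynomial $f_\qq$ only up to sign. So your claim $\det\bar\rho_{A,p}(\Frob_\qq)\equiv\mathbf N\qq$ needs the extra input $\mathrm{nrd}\circ\rho_{A,p}=\chi_p$, which comes from the Weil pairing. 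Compatibility as stated in the proposition does not require this.
\end{enumerate}
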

\begin{proof}
    Suppose first that $\mathcal O = \End(A)$ is a maximal quaternion order, the $p$-adic Tate module $T_p(A)$ is a free $(\mathcal O\otimes \ZZ_p)$-module of rank 1, and hence the action of Galois on $T_p(A)$ yields a representation $\rho_{A,p}:\Gal(\bar K/K)\to(\mathcal O\otimes\ZZ_p)^\times$ (cf. \cite{ohta74}). For a prime $p$ at which $D$ splits, it is easy to check that $T_p(A)\simeq \rho_{A,p}\oplus\rho_{A,p}$. The reduction of $\rho_{A,p}$ modulo $p$ necessarily takes values in $\GL_2(\FF_p)$ for every $p$, in virtue of Wedderburn's little theorem that every central simple $\FF_p$-algebra splits. In combination with the fact that the $T_p(A)$ form a system of compatible representations, this proves the existence of $\bar\rho_{A,p}$ in the case of a maximal order.

    If $\End(A)$ is not maximal, then by \cite[Proposition~3.3]{dp13} the surface $A$ is $K$-isogenous to another surface $A'$ whose endomorphism ring is a maximal order. In particular, the representation $A'[p]$ splits as the direct sum of two copies of a representation $\bar\rho_{A',p}:\Gal(\bar K/K)\to \GL_2(\FF_p)$. Since the traces of Frobenius at almost all primes $\lambda$ of $K$ coincide for $A$ and $A'$, we have an isomorphism of semisimplifications $A[p]^{ss}\simeq A'[p]^{ss}$. This proves (1).    
    
    For (2), we may use \cite[\S4]{jordan86}. Alternatively, one can use the explicit approach in \cite[\S2.4]{schembri19thesis}. Part (3) follows from the fact that $D$ is indefinite and division, and hence it ramifies in at least two finite primes.
\end{proof}

The proposition above gives us a necessary condition for an abelian surface to have quaternionic multiplication. Namely, if we have an abelian surface $A/K$, then its $p$-torsion $A[p]$ must decompose in two equal subrepresentations, and for at least two of these primes the image of Galois needs to be contained in the non-split Cartan subgroup of $\GL_2(\FF_p)$. On the other hand, the results of the previous section allow us to say that an elliptic curve $E/K$ (and hence the surface $E^2$) has at most one non-split Cartan prime. As we  now show, this is the case unless $j(E)\in\QQ$, or $j(E)$ belongs to a finite set of exceptions.

We are ready to prove the characterization for fake elliptic curves.

\begin{theorem}\label{theorem:fake elliptic curves}
    Let $K$ be an imaginary quadratic field over which Conjecture~\ref{weak conj} holds. Let $A/K$ be an abelian surface such that:
    \begin{enumerate}
        \item $A$ is not the twist of a base change of an abelian surface defined over $\QQ$, 
        \item $\End(A)\otimes\QQ=\End(A_{\bar K})\otimes\QQ$, and
        \item $\End(A)\otimes\QQ$ is either $\Mat_2(\QQ)$ or a division indefinite division quaternion algebra with center $\QQ$.
    \end{enumerate}
    Let $\mathcal C_A$ be the set of primes at which $A$ has residual image contained in a non-split Cartan subgroup. Then $A$ is simple and has QM if and only if $\#\mathcal C_A\geq 2$. Otherwise, $A$ is isogenous to the square of an elliptic curve.
\end{theorem}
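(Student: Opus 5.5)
The plan is to split along hypothesis (3) and treat the two directions separately, using Proposition~\ref{properties of QM surfaces} for one and Theorem~\ref{theorem:exceptional set of j-invariants} together with Conjecture~\ref{weak conj} for the other. First, if $\End(A)\otimes\QQ$ is a division indefinite quaternion algebra, then $A$ is simple and has QM by definition. Part (3) of Proposition~\ref{properties of QM surfaces} then gives $\#\mathcal C_A\geq 2$ directly, since the quaternion algebra ramifies at at least two finite primes.

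Conversely, suppose $\End(A)\otimes\QQ\simeq\Mat_2(\QQ)$. Then $\End(A)$ contains a non-trivial idempotent up to isogeny, so $A$ is $K$-isogenous to $E_1\times E_2$. Since $\End(A)\otimes\QQ\simeq\Mat_2(\QQ)$ (rather than $\QQ\times\QQ$), the factors must be $K$-isogenous, so $A\sim E^2$ for an elliptic curve $E/K$. By hypothesis (2) the geometric endomorphism algebra is also $\Mat_2(\QQ)$, so $E$ has no potential CM.

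Next I identify the residual systems. Semisimplifications are isogeny invariant, because they are determined by Frobenius traces via Brauer--Nesbitt. Hence $A[p]^{ss}\simeq \bar\rho_{E,p}^{ss}\oplus\bar\rho_{E,p}^{ss}$, and so $\mathcal C_A=\mathcal C_E$. Here I read membership of $p$ in $\mathcal C_A$ through the two-dimensional $\bar\rho_{A,p}$. A small point to check is that if the semisimplified image lies in $C_{ns}(p)$, then so does the actual image. This holds because $C_{ns}(p)$ acts irreducibly, so an image in $C_{ns}(p)$ is irreducible and thus already semisimple. Now assume for contradiction that $\#\mathcal C_E\geq 2$, with distinct primes $p,q\in\mathcal C_E$. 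Choosing level structures gives a point of $X_{ns}(pq)(K)$, which is non-cuspidal because $E$ is an elliptic curve. By Conjecture~\ref{weak conj} this point is non-exceptional, so its image on some $X_H$ is $\QQ$-rational. Hence $j(E)\in\QQ$; equivalently, by Theorem~\ref{theorem:exceptional set of j-invariants} and the subsequent remark, $j(E)\notin S_{p,q}$.

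The final step is to derive a contradiction with hypothesis (1), and this is where I expect the main care is needed. If $j(E)\in\QQ$ and $j\neq 0,1728$, pick $E_0/\QQ$ with $j(E_0)=j(E)$. Then $E$ is a quadratic twist of $E_0\times_\QQ K$, and $A\sim E^2$ is a twist of the base change of $E_0^2$ up to isogeny. This contradicts (1), which I interpret as applying up to isogeny, as the statement of the theorem implicitly requires. If $j\in\{0,1728\}$, then $E$ has CM, which was excluded above. Therefore $\#\mathcal C_A\leq 1$ in the $\Mat_2(\QQ)$ case. Combining both directions gives the equivalence, and shows that in the remaining case $A$ is isogenous to the square of an elliptic curve.
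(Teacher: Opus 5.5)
Your overall route is the paper's. You use Proposition~\ref{properties of QM surfaces}(3) for the QM direction. In the $\Mat_2(\QQ)$ case you write $A\sim E^2$, get a point of $X_{ns}(pq)(K)$ from two primes of $\mathcal C_A$, and use Conjecture~\ref{weak conj} to force $j(E)\in\QQ$, contradicting (1). Your extra care with $j\in\{0,1728\}$ and with reading (1) up to isogeny is fine.

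\textbf{The gap is your ``small point'' on semisimplification, which is false.} A subgroup of $C_{ns}(p)\simeq\FF_{p^2}^\times$ need not act irreducibly: any subgroup of the scalars $\FF_p^\times$ fixes every line. Suppose $\bar\rho_{E,p}^{ss}=\chi\oplus\chi$. Then its image lies in $C_{ns}(p)$, yet $\bar\rho_{E,p}$ may be a non-split extension of $\chi$ by $\chi$. That image contains an element of order $p$, so it lies in no conjugate of $C_{ns}(p)$, a group of order $p^2-1$. For instance, if $E$ has exactly one $K$-rational point of order $2$, then $\bar\rho_{E,2}^{ss}$ is trivial. But $\bar\rho_{E,2}$ has image of order $2$, which is not inside the cyclic group $C_{ns}(2)$ of order $3$. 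Such an $E$ defines no point of $X_{ns}(2q)(K)$, so Conjecture~\ref{weak conj} has nothing to act on. Your reasoning is valid only when the image is non-scalar; there $\bar\rho_{E,p}^{ss}$ is irreducible, which indeed forces $\bar\rho_{E,p}=\bar\rho_{E,p}^{ss}$.

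This cannot be repaired while keeping the semisimplified reading of $\mathcal C_A$, because under that reading the hypothesis $\#\mathcal C_A\geq 2$ becomes far too weak.
\begin{enumerate}
\item $2\in\mathcal C_A$ whenever $E$ has a $K$-rational point of order $2$.
\item Over $K=\QQ(\sqrt{-3})$, also $3\in\mathcal C_A$ whenever $E$ has a $K$-rational $3$-isogeny. The two isogeny characters take values in $\{\pm1\}$, and their product is the mod-$3$ cyclotomic character, which is trivial on $\Gal(\bar K/K)$; so they coincide and the semisimplification is scalar.
\end{enumerate}
So squares of non-CM curves over $\QQ(\sqrt{-3})$ with a $K$-rational cyclic $6$-isogeny and $j\notin\QQ$ would have $\#\mathcal C_A\geq 2$ without being QM.

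The paper's proof tacitly takes $p\in\mathcal C_A$, for $A\sim E^2$, to mean that the image of $\bar\rho_{E,p}$ itself lies in $C_{ns}(p)$. That is exactly the hypothesis Theorem~\ref{theorem:exceptional set of j-invariants} needs to produce a point of $X_{ns}(p_1p_2)(K)$. Adopt that reading and delete the semisimplification step rather than trying to justify it; the rest of your argument then goes through as written.
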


\begin{proof}
    Recall that $A$ is simple if and only if $\End(A)\otimes\QQ$ is a division algebra. Condition (3) ensures that $A$ has a system of residual Galois representations $\{\bar\rho_{A,p}:\Gal(\bar K/K)\to \GL_2(\FF_p)\}_p$ (either because $A$ is isogenous to the square of an elliptic curve, or because it has QM). By \cite{serre72} and \cite{ohta74}, condition (2) ensures that $\bar\rho_{A,p}(\Gal(\bar K/K))=\GL_2(\FF_p)$ for all but finitely many $p$, and in particular $\mathcal C_A$ is finite. 
    
    If $\End(A)\otimes\QQ$ is a division quaternion algebra, then by Proposition~\ref{properties of QM surfaces} we know that $\#\mathcal C_A\geq 2$. 
    Conversely, suppose that $\#\mathcal C_A\geq 2$. Suppose for a contradiction that $A\sim E^2$, where $E/K$ is some elliptic curve, and let $p_1,p_2\in \mathcal C_A$ be distinct primes. Let $S_{p_1,p_2}$ be the set from Theorem~\ref{theorem:exceptional set of j-invariants}. Since we are assuming that Conjecture~\ref{weak conj} is true over $K$, we know that the intersection $S_{p_1,p_2}\cap K$ is empty. This implies that $j(E)\in \QQ$, which cannot be the case, since we are assuming that no twist of $A$ can be defined over $\QQ$. 
    Hence we have a contradiction, and $A$ must be a simple surface. The only remaining possibility is that $\End(A)\otimes\QQ$ is a division quaternion algebra.
\end{proof}

\nocite{magma}
\nocite{githubFloritNonsplitCartans}

\bibliographystyle{alpha}
\bibliography{references}

@misc{hl2025,
      title={Genus formulas for families of modular curves}, 
      author={Asimina S. Hamakiotes and Jun Bo Lau},
      year={2025},
      eprint={2501.10883},
      archivePrefix={arXiv},
      primaryClass={math.NT},
      url={https://arxiv.org/abs/2501.10883}, 
}

@article{michaudjacobs22,
title = {Quadratic points on non-split Cartan modular curves},
author      = {Michaud-Jacobs{\relax\ (Michaud-Rodgers)}, Philippe},
journal = {International Journal of Number Theory},
volume = {18},
number = {02},
pages = {245-267},
year = {2022},
doi = {10.1142/S1793042122500178},
URL = {https://doi.org/10.1142/S1793042122500178},
eprint = {https://doi.org/10.1142/S1793042122500178}
}

@misc{lmfdb,
  shorthand    = {LMFDB},
  author       = {The {LMFDB Collaboration}},
  title        = {The {L}-functions and modular forms database},
  howpublished = {\url{https://www.lmfdb.org}},
  year         = {2026},
  note         = {[Online; accessed 25 January 2026]},
}

@article {jordan86,
    AUTHOR = {Jordan, Bruce W.},
     TITLE = {Points on {S}himura curves rational over number fields},
   JOURNAL = {J. Reine Angew. Math.},
  FJOURNAL = {Journal f\"ur die Reine und Angewandte Mathematik. [Crelle's
              Journal]},
    VOLUME = {371},
      YEAR = {1986},
     PAGES = {92--114},
      ISSN = {0075-4102,1435-5345},
   MRCLASS = {11G15 (11G18 14G25 14K22)},
  MRNUMBER = {859321},
MRREVIEWER = {Autorreferat},
       DOI = {10.1515/crll.1986.371.92},
       URL = {https://doi.org/10.1515/crll.1986.371.92},
}

@article{schembri2019examples,
  title={Examples of genuine QM abelian surfaces which are modular},
  author={Schembri, Ciaran},
  journal={Research in Number Theory},
  volume={5},
  number={1},
  pages={11},
  year={2019},
  publisher={Springer}
}

@unpublished{schembri19thesis,
           title = {Modularity of abelian surfaces over imaginary quadratic fields},
          school = {University of Sheffield},
          author = {Ciaran Schembri},
       publisher = {University of Sheffield},
            year = {2019},
             url = {https://etheses.whiterose.ac.uk/24703/}
}

@article {ohta74,
    AUTHOR = {Ohta, Masami},
     TITLE = {On {$l$}-adic representations of {G}alois groups obtained from
              certain two-dimensional abelian varieties},
   JOURNAL = {J. Fac. Sci. Univ. Tokyo Sect. IA Math.},
  FJOURNAL = {Journal of the Faculty of Science. University of Tokyo.
              Section IA. Mathematics},
    VOLUME = {21},
      YEAR = {1974},
     PAGES = {299--308},
      ISSN = {0040-8980},
   MRCLASS = {10D25 (14G25 14K22)},
  MRNUMBER = {419368},
MRREVIEWER = {F.\ Oort},
}

@article {sengunsiksek18,
    AUTHOR = {{\c S}eng\"un, Mehmet Haluk and Siksek, Samir},
     TITLE = {On the asymptotic {F}ermat's last theorem over number fields},
   JOURNAL = {Comment. Math. Helv.},
  FJOURNAL = {Commentarii Mathematici Helvetici. A Journal of the Swiss
              Mathematical Society},
    VOLUME = {93},
      YEAR = {2018},
    NUMBER = {2},
     PAGES = {359--375},
      ISSN = {0010-2571,1420-8946},
   MRCLASS = {11D41 (11F80)},
  MRNUMBER = {3811755},
MRREVIEWER = {B.\ Sury},
       DOI = {10.4171/CMH/437},
       URL = {https://doi-org.sire.ub.edu/10.4171/CMH/437},
}

@misc{githubGitHubDavidzywinaOpenImage,
	author = {David Zywina},
	title = {\emph{GitHub repository related to} Explicit open images for elliptic curves over {$\mathbb Q$}},
	howpublished = {\url{https://github.com/davidzywina/OpenImage}},
	year = {2023},
	note = {[Accessed 08-07-2026]},
}

@misc{githubFloritNonsplitCartans,
	author = {Enric Florit},
	title = {\emph{GitHub repository related to} Non-split Cartan curves and a characterization of fake elliptic curves},
	howpublished = {\url{https://github.com/3nr1c/nonsplit-Cartans}},
	year = {2026}
}

@misc{githubMichaudJacobsQuadCartan,
    title={\emph{Code to accompany the paper} {Q}uadratic points on non-split {C}artan modular curves},
    author = {Michaud-Jacobs{\relax\ (Michaud-Rodgers)}, Philippe},
    howpublished = {\url{https://github.com/michaud-jacobs/quad-cartan/tree/main}},
    year = {2020},
    note = {[Accessed 08-07-2026]}
}

@misc{zywina24,
      title={Explicit open images for elliptic curves over $\mathbb{Q}$}, 
      author={David Zywina},
      year={2024},
      eprint={2206.14959},
      archivePrefix={arXiv},
      primaryClass={math.NT},
      url={https://arxiv.org/abs/2206.14959}, 
}

@article {dlm22,
    AUTHOR = {Dose, Valerio and Lido, Guido and Mercuri, Pietro},
     TITLE = {Automorphisms of {C}artan modular curves of prime and
              composite level},
   JOURNAL = {Algebra Number Theory},
  FJOURNAL = {Algebra \& Number Theory},
    VOLUME = {16},
      YEAR = {2022},
    NUMBER = {6},
     PAGES = {1423--1461},
      ISSN = {1937-0652,1944-7833},
   MRCLASS = {11G18 (11G05 11G15 11G30 14G35)},
  MRNUMBER = {4488580},
MRREVIEWER = {David\ Ter-Borch Gram Lilienfeldt},
       DOI = {10.2140/ant.2022.16.1423},
       URL = {https://doi-org.sire.ub.edu/10.2140/ant.2022.16.1423},
}

@article {kl89,
    AUTHOR = {Kolyvagin, V. A. and Logach\"ev, D. Yu.},
     TITLE = {Finiteness of the {S}hafarevich-{T}ate group and the group of
              rational points for some modular abelian varieties},
   JOURNAL = {Algebra i Analiz},
  FJOURNAL = {Algebra i Analiz},
    VOLUME = {1},
      YEAR = {1989},
    NUMBER = {5},
     PAGES = {171--196},
      ISSN = {0234-0852},
   MRCLASS = {11G10 (11G40 14K15)},
  MRNUMBER = {1036843},
MRREVIEWER = {Takeshi\ Ooe},
}

@article {MR5057874,
    AUTHOR = {Kara, Yasemin and Nomden, Stef and \"Ozman, Ekin},
     TITLE = {Non-trivial solutions of {$Aa^p+Bb^p=Cc^3$} over number
              fields},
   JOURNAL = {J. Number Theory},
  FJOURNAL = {Journal of Number Theory},
    VOLUME = {286},
      YEAR = {2026},
     PAGES = {108--130},
      ISSN = {0022-314X,1096-1658},
   MRCLASS = {11D41 (11F75 11F80 11G05)},
  MRNUMBER = {5057874},
       DOI = {10.1016/j.jnt.2026.02.003},
       URL = {https://doi-org.sire.ub.edu/10.1016/j.jnt.2026.02.003},
}

@article {MR4620323,
    AUTHOR = {Isik, Erman and Kara, Yasemin and Ozman, Ekin},
     TITLE = {On ternary {D}iophantine equations of signature {$(p,p,3)$}
              over number fields},
   JOURNAL = {Canad. J. Math.},
  FJOURNAL = {Canadian Journal of Mathematics. Journal Canadien de
              Math\'ematiques},
    VOLUME = {75},
      YEAR = {2023},
    NUMBER = {4},
     PAGES = {1293--1313},
      ISSN = {0008-414X,1496-4279},
   MRCLASS = {11D41 (11F80)},
  MRNUMBER = {4620323},
MRREVIEWER = {Victor\ Y.\ Wang},
       DOI = {10.4153/s0008414x22000311},
       URL = {https://doi-org.sire.ub.edu/10.4153/s0008414x22000311},
}

@article {MR4101578,
    AUTHOR = {Kara, Yasemin and Ozman, Ekin},
     TITLE = {Asymptotic generalized {F}ermat's last theorem over number
              fields},
   JOURNAL = {Int. J. Number Theory},
  FJOURNAL = {International Journal of Number Theory},
    VOLUME = {16},
      YEAR = {2020},
    NUMBER = {5},
     PAGES = {907--924},
      ISSN = {1793-0421,1793-7310},
   MRCLASS = {11D41 (11F80)},
  MRNUMBER = {4101578},
MRREVIEWER = {George\ Catalin\ \c Turca\c s},
       DOI = {10.1142/S1793042120500463},
       URL = {https://doi-org.sire.ub.edu/10.1142/S1793042120500463},
}

@article {serre72,
    AUTHOR = {Serre, Jean-Pierre},
     TITLE = {Propri\'et\'es galoisiennes des points d'ordre fini des
              courbes elliptiques},
   JOURNAL = {Invent. Math.},
  FJOURNAL = {Inventiones Mathematicae},
    VOLUME = {15},
      YEAR = {1972},
    NUMBER = {4},
     PAGES = {259--331},
      ISSN = {0020-9910,1432-1297},
   MRCLASS = {14G25 (14K15)},
  MRNUMBER = {387283},
MRREVIEWER = {J.\ W. S. Cassels},
       DOI = {10.1007/BF01405086},
       URL = {https://doi-org.sire.ub.edu/10.1007/BF01405086},
}

@article {lv14,
    AUTHOR = {Larson, Eric and Vaintrob, Dmitry},
     TITLE = {Determinants of subquotients of {G}alois representations
              associated with abelian varieties},
      NOTE = {With an appendix by Brian Conrad},
   JOURNAL = {J. Inst. Math. Jussieu},
  FJOURNAL = {Journal of the Institute of Mathematics of Jussieu. JIMJ.
              Journal de l'Institut de Math\'ematiques de Jussieu},
    VOLUME = {13},
      YEAR = {2014},
    NUMBER = {3},
     PAGES = {517--559},
      ISSN = {1474-7480,1475-3030},
   MRCLASS = {11G10 (11G05 14K15)},
  MRNUMBER = {3211798},
MRREVIEWER = {Joseph\ H.\ Silverman},
       DOI = {10.1017/S1474748013000182},
       URL = {https://doi-org.sire.ub.edu/10.1017/S1474748013000182},
}

@article {hs91,
    AUTHOR = {Harris, Joe and Silverman, Joe},
     TITLE = {Bielliptic curves and symmetric products},
   JOURNAL = {Proc. Amer. Math. Soc.},
  FJOURNAL = {Proceedings of the American Mathematical Society},
    VOLUME = {112},
      YEAR = {1991},
    NUMBER = {2},
     PAGES = {347--356},
      ISSN = {0002-9939,1088-6826},
   MRCLASS = {11G30 (14H25)},
  MRNUMBER = {1055774},
MRREVIEWER = {Sheldon\ Kamienny},
       DOI = {10.2307/2048726},
       URL = {https://doi-org.sire.ub.edu/10.2307/2048726},
}

@article {dp13,
    AUTHOR = {Demeyer, Jeroen and Perucca, Antonella},
     TITLE = {The constant of the support problem for abelian varieties},
   JOURNAL = {J. Number Theory},
  FJOURNAL = {Journal of Number Theory},
    VOLUME = {133},
      YEAR = {2013},
    NUMBER = {9},
     PAGES = {2843--2856},
      ISSN = {0022-314X,1096-1658},
   MRCLASS = {14K15 (11G10 14L10 16H10 16S50)},
  MRNUMBER = {3057049},
MRREVIEWER = {Alessandra\ Bertapelle},
       DOI = {10.1016/j.jnt.2013.01.013},
       URL = {https://doi-org.sire.ub.edu/10.1016/j.jnt.2013.01.013},
}

@article {siksek09,
    AUTHOR = {Siksek, Samir},
     TITLE = {Chabauty for symmetric powers of curves},
   JOURNAL = {Algebra Number Theory},
  FJOURNAL = {Algebra \& Number Theory},
    VOLUME = {3},
      YEAR = {2009},
    NUMBER = {2},
     PAGES = {209--236},
      ISSN = {1937-0652,1944-7833},
   MRCLASS = {11G35 (11G30 14H40)},
  MRNUMBER = {2491943},
MRREVIEWER = {Oscar\ G.\ Villareal},
       DOI = {10.2140/ant.2009.3.209},
       URL = {https://doi-org.sire.ub.edu/10.2140/ant.2009.3.209},
}

@article {magma,
    AUTHOR = {Bosma, Wieb and Cannon, John and Playoust, Catherine},
     TITLE = {The {M}agma algebra system. {I}. {T}he user language},
      NOTE = {Computational algebra and number theory (London, 1993)},
   JOURNAL = {J. Symbolic Comput.},
  FJOURNAL = {Journal of Symbolic Computation},
    VOLUME = {24},
      YEAR = {1997},
    NUMBER = {3-4},
     PAGES = {235--265},
      ISSN = {0747-7171},
   MRCLASS = {68Q40},
  MRNUMBER = {MR1484478},
       DOI = {10.1006/jsco.1996.0125},
       URL = {http://dx.doi.org/10.1006/jsco.1996.0125},
}

@book {serre89,
    AUTHOR = {Serre, Jean-Pierre},
     TITLE = {Lectures on the {M}ordell-{W}eil theorem},
    SERIES = {Aspects of Mathematics},
    VOLUME = {E15},
 PUBLISHER = {Friedr. Vieweg \& Sohn, Braunschweig},
      YEAR = {1989},
     PAGES = {x+218},
      ISBN = {3-528-08968-7},
   MRCLASS = {11G10 (11D41 11G30 14Gxx)},
  MRNUMBER = {1002324},
MRREVIEWER = {Joseph\ H.\ Silverman},
       DOI = {10.1007/978-3-663-14060-3},
       URL = {https://doi-org.sire.ub.edu/10.1007/978-3-663-14060-3},
}

@article {lv14b,
    AUTHOR = {Larson, Eric and Vaintrob, Dmitry},
     TITLE = {On the surjectivity of {G}alois representations associated to
              elliptic curves over number fields},
   JOURNAL = {Bull. Lond. Math. Soc.},
  FJOURNAL = {Bulletin of the London Mathematical Society},
    VOLUME = {46},
      YEAR = {2014},
    NUMBER = {1},
     PAGES = {197--209},
      ISSN = {0024-6093,1469-2120},
   MRCLASS = {11G05},
  MRNUMBER = {3161774},
MRREVIEWER = {Laura\ Paladino},
       DOI = {10.1112/blms/bdt081},
       URL = {https://doi-org.sire.ub.edu/10.1112/blms/bdt081},
}

@article {chi87,
    AUTHOR = {Chi, W\^en Ch\^en},
     TITLE = {Twists of central simple algebras and endomorphism algebras of
              some abelian varieties},
   JOURNAL = {Math. Ann.},
  FJOURNAL = {Mathematische Annalen},
    VOLUME = {276},
      YEAR = {1987},
    NUMBER = {4},
     PAGES = {615--632},
      ISSN = {0025-5831,1432-1807},
   MRCLASS = {14K15 (11G10)},
  MRNUMBER = {879540},
MRREVIEWER = {Ching-li\ Chai},
       DOI = {10.1007/BF01456990},
       URL = {https://doi.org/10.1007/BF01456990},
}

@incollection {pyle04,
    AUTHOR = {Pyle, Elisabeth E.},
     TITLE = {Abelian varieties over {$\mathbb Q$} with large endomorphism
              algebras and their simple components over {$\overline{\mathbb Q}$}},
 BOOKTITLE = {Modular curves and abelian varieties},
    SERIES = {Progr. Math.},
    VOLUME = {224},
     PAGES = {189--239},
 PUBLISHER = {Birkh\"auser, Basel},
      YEAR = {2004},
      ISBN = {3-7643-6586-2},
   MRCLASS = {11G10 (14K15)},
  MRNUMBER = {2058652},
MRREVIEWER = {Sigrid\ Wortmann},
}

@article {quer09,
    AUTHOR = {Quer, Jordi},
     TITLE = {Fields of definition of building blocks},
   JOURNAL = {Math. Comp.},
  FJOURNAL = {Mathematics of Computation},
    VOLUME = {78},
      YEAR = {2009},
    NUMBER = {265},
     PAGES = {537--554},
      ISSN = {0025-5718,1088-6842},
   MRCLASS = {11G10 (11G18 11R34)},
  MRNUMBER = {2448720},
MRREVIEWER = {Ahmad\ El-Guindy},
       DOI = {10.1090/S0025-5718-08-02132-7},
       URL = {https://doi-org.sire.ub.edu/10.1090/S0025-5718-08-02132-7},
}

@article {gq14,
    AUTHOR = {Guitart, Xavier and Quer, Jordi},
     TITLE = {Modular abelian varieties over number fields},
   JOURNAL = {Canad. J. Math.},
  FJOURNAL = {Canadian Journal of Mathematics. Journal Canadien de
              Math\'ematiques},
    VOLUME = {66},
      YEAR = {2014},
    NUMBER = {1},
     PAGES = {170--196},
      ISSN = {0008-414X,1496-4279},
   MRCLASS = {11G10 (11F11 11G18)},
  MRNUMBER = {3150707},
MRREVIEWER = {Ahmad\ El-Guindy},
       DOI = {10.4153/CJM-2012-040-2},
       URL = {https://doi-org.sire.ub.edu/10.4153/CJM-2012-040-2},
}

@article {bdmtv23,
    AUTHOR = {Balakrishnan, Jennifer S. and Dogra, Netan and M\"uller, J.
              Steffen and Tuitman, Jan and Vonk, Jan},
     TITLE = {Quadratic {C}habauty for modular curves: algorithms and
              examples},
   JOURNAL = {Compos. Math.},
  FJOURNAL = {Compositio Mathematica},
    VOLUME = {159},
      YEAR = {2023},
    NUMBER = {6},
     PAGES = {1111--1152},
      ISSN = {0010-437X,1570-5846},
   MRCLASS = {11G18 (11G50 11Y50 14G05 14G35)},
  MRNUMBER = {4589060},
MRREVIEWER = {Steven\ D.\ Galbraith},
       DOI = {10.1112/s0010437x23007170},
       URL = {https://doi-org.sire.ub.edu/10.1112/s0010437x23007170},
}

@article {frengley23,
    AUTHOR = {Frengley, Sam},
     TITLE = {Congruences of elliptic curves arising from nonsurjective
              {${\rm mod} \,N$} {G}alois representations},
   JOURNAL = {Math. Comp.},
  FJOURNAL = {Mathematics of Computation},
    VOLUME = {92},
      YEAR = {2023},
    NUMBER = {339},
     PAGES = {409--450},
      ISSN = {0025-5718,1088-6842},
   MRCLASS = {11G05 (11F80)},
  MRNUMBER = {4496970},
MRREVIEWER = {Riccardo\ Pengo},
       DOI = {10.1090/mcom/3770},
       URL = {https://doi-org.sire.ub.edu/10.1090/mcom/3770},
}

@misc{cn23,
      title={On the modularity of elliptic curves over imaginary quadratic fields}, 
      author={Ana Caraiani and James Newton},
      year={2025},
      eprint={2301.10509},
      archivePrefix={arXiv},
      primaryClass={math.NT},
      url={https://arxiv.org/abs/2301.10509}, 
}

\appendix
\section{Equations and points on \texorpdfstring{$X_{ns}(15)$}{Xns(15)}}
\label{appendix:Xns15}

The following set of equations defines the curve $X_{ns}(15)$ in $\PP^6$:
\begin{align*}
& -3x_1x_3 + 3x_1x_4 - 6x_1x_6 + 6x_1x_7 + 2x_2x_3 + 2x_2x_5 = 0, \\[1ex]
& 4x_1x_3 - 6x_1x_4 + 4x_1x_5 - 3x_1x_6 - 6x_1x_7 - x_2x_3 + x_2x_7 = 0, \\[1ex]
& 3x_1x_3 - 6x_1x_4 - 6x_1x_5 - 3x_1x_7 - 2x_2x_3 + x_2x_4 - 4x_2x_5 + x_2x_6 + x_2x_7 = 0, \\[1ex]
& 7x_1^2 + 2x_1x_2 - x_3^2 + x_3x_4 - 3x_3x_5 + x_3x_6 + x_4^2 + x_4x_5 - 3x_5^2 \\ 
&\qquad + 2x_5x_7 + x_6^2 - x_6x_7 + x_7^2 = 0, \\[1ex]
& x_1^2 - 4x_1x_2 - x_3^2 + x_3x_6 + x_4^2 - 3x_4x_6 + 2x_4x_7 + 3x_5^2 \\ 
&\qquad + 6x_5x_6 - 2x_5x_7 - 2x_6^2 - 2x_6x_7 = 0, \\[1ex]
& -2x_1^2 - 2x_1x_2 - x_3^2 - 3x_3x_5 + x_3x_7 + 3x_4x_5 - 3x_4x_6 - 2x_5^2 \\ 
&\qquad + 4x_5x_6 + 2x_5x_7 - x_6^2 - 2x_6x_7 - x_7^2 = 0, \\[1ex]
& 3x_1^2 - 2x_1x_2 - x_3^2 - 3x_3x_4 - x_3x_5 - 4x_3x_6 + 2x_4^2 - x_4x_5 - 4x_4x_6 \\ 
&\qquad - 3x_5^2 + 2x_5x_6 - 2x_5x_7 - 2x_6x_7 - 2x_7^2 = 0, \\[1ex]
& -3x_1^2 + 2x_1x_2 + x_3x_4 + 3x_3x_5 + 2x_3x_6 - 2x_3x_7 + 3x_4x_5 - x_4x_6 \\ 
&\qquad + x_4x_7 - x_5^2 + 6x_5x_6 - 2x_5x_7 - x_6^2 + x_7^2 = 0, \\[1ex]
& x_1^2 + 4x_2^2 + x_3^2 - 2x_3x_4 - 4x_3x_5 - 6x_3x_6 - 5x_4^2 + 2x_4x_5 \\ 
&\qquad - 2x_4x_6 - 4x_4x_7 + 7x_5^2 + 4x_5x_6 - 4x_5x_7 + 8x_6^2 - 4x_6x_7 = 0,\\
& 2x_1^2 + 2x_1x_2 + 2x_3^2 - x_3x_4 + 3x_3x_5 - 2x_3x_6 - 2x_4^2 + x_4x_5 \\ 
&\qquad - x_4x_6 - 3x_4x_7 + 2x_5^2 + 2x_5x_6 + 2x_5x_7 - x_6^2 + 2x_6x_7 - 3x_7^2 = 0.
\end{align*}
\newpage
\setcounter{table}{0}
\renewcommand{\thetable}{A.\arabic{table}}
\renewcommand*{\theHtable}{\thetable}
Table~\ref{table:Xns15-known} shows the 14 pairs of known quadratic points on $X_{ns}(15)$, this set is shown to be complete in Theorem~\ref{theorem:Xns15}. The morphism $\varrho_w:X_{ns}(15)\to X_{ns}^+(15)$ was introduced in Section~\ref{section:non-split Cartan curves}.

\begin{table}[h!]
\makebox[\textwidth]{
\centering
\renewcommand{\arraystretch}{1.5}
\begin{tabular}{|c|c|c|c|c|}
\hline
$Q_i$ & $\theta^2$ & Coordinates & CM & $P_i=\varrho_w(Q_i)$ \\ \hline
$Q_{1}$ & $-3$ & $\left(-\frac{3}{16}\theta : -\frac{3}{4}\theta : -\frac{1}{8} : -\frac{5}{8} : \frac{5}{16} : 1 : 1\right)$ & $-3$ & $(1 : -2 : 1)$ \\ \hline
$Q_{2}$ & $-3$ & $\left(\frac{3}{4}\theta : -\frac{3}{4}\theta : -2 : -\frac{5}{2} : \frac{5}{4} : 1 : 1\right)$ & $-3$ & $(-2 : -2 : 1)$ \\ \hline
$Q_{3}$ & $-7$ & $\left(\frac{1}{6}\theta : -\theta : \frac{4}{3} : 0 : -\frac{5}{6} : -\frac{2}{3} : 1\right)$ & $-7$ & $(-1 : -1 : 1)$ \\ \hline
$Q_{4}$ & $-7$ & $\left(-\frac{1}{6}\theta : -\frac{3}{2}\theta : -\frac{1}{3} : 0 : \frac{5}{6} : \frac{8}{3} : 1\right)$ & $-7$ & $(0 : -1 : 1)$ \\ \hline
$Q_{5}$ & $-7$ & $\left(-\frac{3}{22}\theta : -\frac{6}{11}\theta : \frac{8}{11} : -\frac{10}{11} : \frac{5}{22} : \frac{16}{11} : 1\right)$ & $-7$ & $(1 : 1 : 1)$ \\ \hline
$Q_{6}$ & $-7$ & $\left(\frac{3}{2}\theta : -\frac{3}{2}\theta : 13 : 10 : -\frac{5}{2} : -4 : 1\right)$ & $-7$ & $(-2 : 1 : 1)$ \\ \hline
$Q_{7}$ & $-43$ & $\left(-\frac{1}{12}\theta : -\frac{1}{8}\theta : -\frac{7}{6} : -\frac{5}{4} : \frac{5}{12} : \frac{7}{12} : 1\right)$ & $-43$ & $(1 : -1 : 0)$ \\ \hline
$Q_{8}$ & $-43$ & $\left(-\frac{1}{18}\theta : -\frac{1}{12}\theta : \frac{2}{9} : -\frac{5}{6} : \frac{5}{18} : \frac{13}{18} : 1\right)$ & $-43$ & $(1 : 1 : 0)$ \\ \hline
$Q_{9}$ & $-67$ & $\left(\frac{1}{24}\theta : -\frac{1}{4}\theta : \frac{11}{12} : -\frac{5}{4} : -\frac{25}{24} : \frac{1}{6} : 1\right)$ & $-67$ & $(-1 : 0 : 1)$ \\ \hline
$Q_{10}$ & $-67$ & $\left(-\frac{1}{24}\theta : -\frac{3}{8}\theta : -\frac{7}{6} : \frac{5}{4} : \frac{25}{24} : \frac{11}{6} : 1\right)$ & $-67$ & $(0 : 0 : 1)$ \\ \hline
$Q_{11}$ & $-163$ & $\left(-\frac{7}{264}\theta : -\frac{3}{44}\theta : \frac{41}{132} : -\frac{35}{44} : \frac{65}{264} : \frac{61}{66} : 1\right)$ & $-163$ & $(3 : 36 : 1)$ \\ \hline
$Q_{12}$ & $-163$ & $\left(-\frac{7}{96}\theta : -\frac{1}{32}\theta : -\frac{53}{24} : -\frac{35}{16} : \frac{65}{96} : \frac{19}{24} : 1\right)$ & $-163$ & $(-4 : 36 : 1)$ \\ \hline
$Q_{13}$ & $-214663$ & $\left(-\frac{7}{8286}\theta : -\frac{3}{1381}\theta : -\frac{2836}{4143} : -\frac{1140}{1381} : \frac{2795}{8286} : \frac{2618}{4143} : 1\right)$ & no & $(3 : -37 : 1)$ \\ \hline
$Q_{14}$ & $-214663$ & $\left(-\frac{7}{7194}\theta : -\frac{1}{2398}\theta : \frac{41}{3597} : -\frac{1140}{1199} : \frac{2795}{7194} : \frac{2072}{3597} : 1\right)$ & no & $(-4 : -37 : 1)$ \\ \hline
\end{tabular}
}
\vspace{0.5em}
\caption{Known points on $X_{ns}(15)$, coming from pullbacks of the points on $X_{ns}^+(15)(\QQ)$.}
\label{table:Xns15-known}
\end{table}

\end{document}